\def\R{\mathbb{R}^{3}}
\def\p{\partial}
\def\Q{Q_{1,2}}
\def\n{|\nabla|^{-1}}
\def\d{|\nabla|}
\def\po{\partial_{1}}
\def\R{\mathbb{R}^{2}}
\def\pt{\partial_{2}}
\newtheorem{theorem}{Theorem}[section]
\newtheorem{lemma}[theorem]{Lemma}
\newtheorem{proposition}[theorem]{Proposition}
\newtheorem*{thm}{Main Theorem}
\theoremstyle{definition}
\theoremstyle{remark}
\newtheorem{remark}[theorem]{Remark}
\numberwithin{equation}{section}
\begin{document}

\title[2D Incompressible Isotropic Elastodynamics]{Global existence for the 2D incompressible isotropic elastodynamics  for small initial data }

\author{Xuecheng Wang}
\address{Mathematics Department, Princeton University, Princeton, New Jersey,08544, USA}
\email{xuecheng@math.princeton.edu}
\thanks{}
\maketitle
\begin{abstract}
We establish the global existence and the asymptotic behavior for the 2D incompressible isotropic
 elastodynamics for sufficiently small, smooth initial data in the Eulerian coordinates formulation.
The main tools used to derive the main 
results are, on the one hand, a modified energy method to  derive the energy estimate and on the other hand, a Fourier transform method with a suitable choice of $Z-$ norm to derive the sharp $L^\infty-$ estimate.  We  mention that the global existence of the same system but in the \emph{Lagrangian} coordinates formulation was recently obtained by Lei in \cite{Lei1}. Our goal is to improve the understanding of the behavior of solutions. Also we present a different approach to study $2D$ nonlinear wave equations from the point of view in  frequency space.\end{abstract}
\setcounter{tocdepth}{1}
\tableofcontents

\section{Introduction}
In this paper, we consider the questions of the global existence and the asymptotic behavior  for the motion of elastic waves for isotropic incompressible materials in 2D. The motion of an elastic
 body is described as a time dependent family of orientation preserving diffeomorphism $x(t,\cdot)$, $0\leq t < T$. Material point $X$
in the reference configuration is deformed to the spatial position $x(t, X)$ at time $t$. Initially, we have $x(0,X)=X$. We use $X(t, x)$ to denote the inverse map of $x(t,X)$ and  we have $X(0,x)=x$. Since we can see more directly the motion of an elastic body in Eulerian coordinates, we work in the Eulerian coordinates formulation.   From now on, if without special annotations,  the derivatives $(\p_t, \nabla)$
are with respect to the Eulerian coordinates $(t,x)$.

For the purposes of fixing notations and seeing how the incompressible condition enters  the picture, we record the following lemma, which can be found in \cite{Lei2}[Lemma 2.1]:
\begin{lemma}\label{constraint}
 Given a family of deformations $x(t, X)$, define the velocity,
 deformation gradient and the displacement gradient as follows:
\[
v(t, x) = \frac{d x(t, X)}{d t}\Bigg|_{X= X(t,x)}, \quad F(t, x) = 
\frac{\p x(t, X)}{\p X } \Bigg|_{X= X(t,x)},\quad G(t,x) = F(t,x) - I.
\]
If $x(t, X)$ is incompressible, then $det F(t,x)\equiv 1, $ 
 \begin{equation}\label{incompressible}
 \nabla\cdot v = 0,\quad (\nabla\cdot F^{\top})_j = (\nabla\cdot G^{\top})_j= \p_{i} G_{i,j}=0,
 \end{equation}
and
\begin{equation}\label{constraint1}
 \p_j G_{i,k} - \p_k G_{i, j} = G_{m, k} \p_{m}G_{i,j} - G_{m,j} \p_{m} G_{i,k}, \quad i,j,k\in \{1,2,\cdots,n\}.
 \end{equation}
\end{lemma}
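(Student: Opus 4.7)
The plan is to verify each of the four identities by elementary chain-rule manipulations, keeping careful track of Eulerian versus Lagrangian derivatives. Throughout, I use the definition $F_{i,j}(t,x) = (\partial x_i/\partial X_j)(t,X(t,x))$ and, by differentiating the relation $x(t,X(t,x)) = x$, the identity $(\partial X_k/\partial x_i)(t,x) = (F^{-1})_{k,i}(t,x)$, with $F^{-1}$ denoting the matrix inverse.

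The identity $\det F \equiv 1$ is just the pointwise form of incompressibility for the diffeomorphism $x(t,\cdot)$; evaluating this along a particle trajectory $\dot{x}=v$ and applying the classical Liouville formula $\frac{d}{dt}\det F = (\det F)(\nabla\cdot v)$ then forces $\nabla\cdot v = 0$.

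For the Piola-type identity $\partial_i G_{i,j} = \partial_i F_{i,j} = 0$, the chain rule yields
\begin{equation*}
\partial_{x_i} F_{i,j}(t,x) \;=\; \frac{\partial^2 x_i}{\partial X_j\,\partial X_k}\bigg|_{X=X(t,x)} \cdot (F^{-1})_{k,i}(t,x),
\end{equation*}
which by the Jacobi formula $\partial_{X_j}\det F^{L} = (\det F^L)\,(F^L)^{-1}_{k,i}\,\partial^2 x_i/(\partial X_j\partial X_k)$ equals $\partial_{X_j}(\log\det F^L)$ evaluated at $X(t,x)$, and thus vanishes since $\det F^L \equiv 1$. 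The equalities involving $\nabla\cdot F^{\top}$ and $\nabla\cdot G^{\top}$ follow from $G=F-I$ together with the matrix-divergence convention used in the paper.

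For the differential constraint (\ref{constraint1}), I substitute $G=F-I$ on both sides; a short expansion shows all linear-in-$F$ terms cancel and the identity reduces to $F_{m,k}\,\partial_m F_{i,j} = F_{m,j}\,\partial_m F_{i,k}$. Applying the chain rule to $\partial_m F_{i,j}$ and using $F_{m,k}(F^{-1})_{l,m} = \delta_{l,k}$ gives
\begin{equation*}
F_{m,k}\,\partial_m F_{i,j} \;=\; \frac{\partial^2 x_i}{\partial X_j\,\partial X_k},
\end{equation*}
which is manifestly symmetric in $(j,k)$ by equality of mixed partials, so it equals the right-hand side. The whole lemma is essentially a bookkeeping exercise; the only mild obstacle is maintaining a clean separation between the Eulerian and Lagrangian differentiations throughout.
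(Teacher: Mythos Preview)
Your argument is correct. For $\det F\equiv 1$, $\nabla\cdot v=0$, and the curl constraint (\ref{constraint1}), you follow essentially the same route as the paper: differentiate $\det F=1$ along the flow via Jacobi/Liouville, and reduce the $G$-identity to the symmetry $F_{m,k}\partial_m F_{i,j}=F_{m,j}\partial_m F_{i,k}$, which is exactly the commutativity of mixed material derivatives rewritten in Eulerian coordinates.

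The one genuine difference is in the Piola identity $\partial_i F_{i,j}=0$. You give a direct, pointwise-in-time proof: the chain rule plus Jacobi's formula identifies $\partial_i F_{i,j}$ with $\partial_{X_j}\log\det F^{L}$, which vanishes because $\det F^{L}\equiv 1$. The paper instead argues dynamically: $\nabla\cdot F^{\top}$ satisfies a homogeneous transport equation and vanishes initially, hence for all time. Your approach is more elementary and self-contained---it uses only the incompressibility of the map $x(t,\cdot)$ at a fixed time and avoids deriving an auxiliary evolution equation. The paper's transport argument, on the other hand, fits naturally into the evolutionary framework of the rest of the article and makes explicit that the constraint is propagated by the flow. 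Either is perfectly adequate here.
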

\begin{proof}
A detailed proof of this lemma can be found in \cite{Lei3}[Section 2]. We only give brief \mbox{remarks} here. From the incompressible condition, we have the volume preserving condition, which is $det (F(t,x))$ $\equiv 1$. After taking a derivative with respect to ``$t$" for this equality, we have
\[
0 =\frac{d}{d t} det(F(t,x))= det(F)(t,x) {trac}( F_{t}F^{-1})\Longrightarrow {trac}(F_{t}F^{-1})=0, 
\]
note that
\[
x_{t}(t, X)= v( t, x(t,X))\Longrightarrow F_{t}=  \nabla v F \Longrightarrow 0=trac(F_{t}F^{-1})=trac(\nabla v)= \nabla\cdot v=0.
\]
It is easy to see that $F(0,x)=I$, hence $\nabla\cdot F^{\top}(0,x)=0$. It can be shown that $\nabla\cdot F^{\top}$ satisfies a transport equation. Since it starts from zero, then it will remain at zero for all the time.

Lastly,  equality (\ref{constraint1}) comes from the facts that $ F_{m,j}\p_{m}F_{i, k}=F_{m,k}\p_{m} F_{i,j}$, which is derived from the commutativity of material derivatives $\p_{X_k}\p_{X_j}=\p_{X_j}\p_{X_k}$ in spatial coordinates formulation and the definition of $G(t,x)$. 
\end{proof}

 With the notation in Lemma \ref{constraint}, the system of incompressible isotropic elastodynamcis can be formulated as follows,
\begin{equation}\label{generalequation}
\left\{\begin{array}{ll}
\p_{t} v + v\cdot \nabla v= -\nabla p  + \nabla\cdot T(F), &\\
\p_{t} G - \nabla v = - v\cdot \nabla G + \nabla v G, &\\
\nabla\cdot v =0,\quad \nabla\cdot G^{\top} =0 , &\\
\end{array} \right. 
\end{equation}
where $T(F)$ is the Cauchy stress tensor and it is derived from the energy functional $W(F)$ as follows,
\begin{equation}\label{CST}
 T(F) := (det F)^{-1} S(F) F^\top = (det F)^{-1}\frac{\p W(F)}{\p F} F^{\top},
\end{equation}
where $S(F)$ is the so-called the Piola-Kirchhoff stress. Readers can refer to \cite{Sideris1}[section 2] for the formal derivation of the system (\ref{generalequation}).

For a general isotropic elastodynamcis, the energy functional $W(F)$ satisfies the following relation,
\begin{equation}\label{energyfunctional} 
 W(F) = W(F Q) = W(Q F),
\end{equation}
for all rotational matrices $Q$ such that $Q = Q^\top$ and $det\,Q = 1$. The first equality in (\ref{energyfunctional}) is resulted from the frame
indifference and the second equality in (\ref{energyfunctional}) is resulted from the isotropy of materials. 

Equality (\ref{energyfunctional})
implies that the energy functional $W(F)$ depends on $F$ through the principal invariants of $F F^{\top}$, which are $tr F F^\top$
and $det F F^\top$ in 2D. If we denote $\tau = \frac{1}{2} tr F F^\top$ and 
$\delta= det F$, then $W(F)= \widetilde{W}(\tau,\delta)$ for some smooth function $\widetilde{W}: \mathbb{R}_+\times \mathbb{R}_+ \rightarrow \mathbb{R}_+$. It follows that the Piola-Kirchhoff stress has the following form,
\begin{equation}\label{PKS}
 S(F)= \frac{\p W(F)}{\p F} = \widetilde{W}_\tau(\tau, \delta) F + \widetilde{W}_{\delta} (\tau,\delta) \delta F^{-\top}.
\end{equation}

In this paper, we mainly study the case of Hookean elasticity. We  derive the corresponding system of equations first and then we will remark on the main differences between the Hookean case and general cases at the end of subsection \ref{diagonalization}. 

The  Hookean strain energy functional has the form $W(F) = \frac{1}{2} |F|^2$, which infers that \mbox{$\widetilde{W}(\tau, \delta)=\tau$}. Recall that 
 $det F(t,x)\equiv 1$. Hence, from (\ref{CST}) and (\ref{PKS}), we know that  $T(F)= F F^\top$. Combining this fact with the system of equations  (\ref{generalequation}), we can derive the  system of evolution for Hookean elasticity in terms of $v$ and $G$ as follows,\begin{equation}\label{mainequation1}
\left\{\begin{array}{ll}
\p_{t} v - \nabla\cdot G = -\nabla p - v \cdot \nabla v + \nabla \cdot (G G^\top), &\\
\p_{t} G - \nabla v = - v\cdot \nabla G + \nabla v G, &\\
\nabla\cdot v =0,\quad \nabla \cdot G^{\top} =0 . &\\
\end{array} \right. 
\end{equation}

\subsection{Diagonalizing the system (\ref{mainequation1})}\label{diagonalization}
 If one wants to analyze the system  on the  Fourier side, it is usually more convenient to symmetrize the system (\ref{mainequation1}). We mainly  conduct this process in this subsection.

 As $v$ and $G^{\top}$ are divergence free, and we are in 2D setting,  we can further reduce the system (\ref{mainequation1}), which has six variables into a system that has three variables.
Assume that $\psi$ is the velocity potential of $v^{\bot}$ and $G_{1}, G_{2}$ are the velocity potentials of $G_{\cdot,1}^{\bot}$ and $G_{\cdot, 2}^{\bot}$ respectively.  
More precisely, 
\begin{equation}\label{velocitypotential}
v=(-\p_2 \psi, \p_1 \psi),\quad G_{\cdot, 1}= (-\p_2 G_1, \p_1 G_1), \quad G_{\cdot, 2}=(-\p_2 G_2, \p_1 G_2).
\end{equation}
After tedious computations (readers can refer to the appendix for details),  we can reduce (\ref{mainequation1}) into the system of equations as follows, 
\begin{equation}\label{mainequation}
\left\{\begin{array}{ll}
\p_{t} \psi  - \p_{1} G_{1} - \p_{2} G_{2}  = \widetilde{\mathcal{N}}_{0}, &\\
\p_{t} G_{1} - \p_{1} \psi=\widetilde{\mathcal{N}}_{1}, & \\
\p_{t} G_{2} - \p_{2} \psi = \widetilde{\mathcal{N}}_{2},  &\\
\end{array} \right.
\end{equation}
where 
\begin{equation}\label{eqn400}
\widetilde{\mathcal{N}}_{0} = - \n ( R_{1} f_{1} + R_{2} f_{2} )
,\quad \widetilde{\mathcal{N}}_{1}=Q_{1,2}(G_1,\psi),\quad \widetilde{\mathcal{N}}_{2}= Q_{1,2}(G_2, \psi),
\end{equation}
\[
f_{i}= Q_{1,2} (\p_{i} \psi, \psi ) - Q_{1,2} (\p_{i} G_{1}, G_{1}) - \Q(\p_{i} G_{2}, G_{2}),\quad i \in \{1,2\},
\]
and $R_{i} = \p_{i}/|\nabla|$ denotes the Riesz transform. The operator $Q_{1,2}(\cdot, \cdot)$ in above equations is one of the celebrated null form bilinear operators that is defined as follows,\begin{equation}\label{nullform}
Q_{1,2} ( f , g ) := \p_{1} f \p_{2} g - \p_{2} f \p_{1}g.
\end{equation}
In terms of potentials, we can reduce the constraint (\ref{constraint1}) as follows,
\begin{equation}\label{constraint3}
\p_{1} G_{2} - \p_{2} G_{1} =  \Q (G_{2},  G_{1}).
\end{equation}

\begin{remark}
Note that the pressure term does not come into play in the first equation of the system (\ref{mainequation}). Since this equation is derived by applying \textit{curl}  on (\ref{mainequation1}), as a result, $\nabla p$ disappears. We can recover the pressure term from the solution $(\psi, G_{1}, G_{2}) $ of (\ref{mainequation}). Here is how it works: Firstly, we derive the original velocity field $v$ and displacement gradient $G$ from potentials $\psi$, $G_1$, and $G_2$; Secondly, we derive the following Poisson's equation  by applying $div$ operator on the first equation of system(\ref{mainequation1}), 
\begin{equation}\label{laplace}
\Delta p = div ( \nabla\cdot (G G^{\top}) - v\cdot \nabla v);
\end{equation}
Lastly, the pressure term can be derived by solving above Poisson's equation (\ref{laplace}).
\end{remark}

After diagonalizing the system (\ref{mainequation}), we  get the following system of equations,
 \begin{equation}\label{maineqn1}
\left\{\begin{array}{l}
\p_{t} \phi_{0}  =  \mathcal{N}_{0}= R_{1} \widetilde{\mathcal{N}}_{2} - R_{2} \widetilde{\mathcal{N}}_{1} , \\
\\
\p_{t} \Phi + i |\nabla| \Phi = \mathcal{N}_{1}=\widetilde{\mathcal{N}}_{0} + i (R_{1}  \widetilde{\mathcal{N}}_{1} + R_{2}  \widetilde{\mathcal{N}}_{2}), \\
\end{array} \right.
\end{equation} 
where $\widetilde{\mathcal{N}}_{0}$, $\widetilde{\mathcal{N}}_1$ and $\widetilde{\mathcal{N}}_2$ are defined in (\ref{eqn400}) and $\phi_0$ and $\Phi$  are defined as follows,
\begin{equation}\label{changevariables}
 \phi_{0}  :=  |\nabla|^{-1} (\p_{1} G_{2} - \p_{2} G_{1} )= R_{1} G_{2} - R_{2} G_{1},\quad \Phi:= \psi +  i (R_{1} G_{1} + R_{2} G_{2}).
\end{equation}
It is easy to see that we can  recover $(\psi, G_{1}, G_{2})$ from $(\phi_{0}, \Phi)$ by the following identities, \begin{equation}\label{equation143}
 \psi = \mathfrak{Re}(\Phi) = \frac{\Phi + \overline{\Phi}}{2},\quad
 G_{1} = R_{2}\phi_{0} - R_{1}(\frac{\Phi - \overline{\Phi}} {2 i} ),\quad G_{2}  = - R_{1} \phi_{0} - R_{2}(\frac{\Phi - \overline{\Phi}}{2 i}).
 \end{equation}
Therefore it is sufficient to study the system (\ref{maineqn1}) instead.

We can rewrite  the constraint equation (\ref{constraint3}) in terms of  $\phi_{0}$ and $\Phi$ as follows,
\begin{equation}\label{constraint5}
\phi_{0} = \mathcal{N}_{2},
\end{equation}
\[
 \mathcal{N}_{2} = \n \left[ \Q(R_{2}\phi_{0}, R_{1} \phi_{0}) + \sum_{\begin{subarray}{c}\mu\in\{+,-\} \\ i=1,2\\ \end{subarray}} \frac{c_{\mu}}{2} \Q(R_{i} \phi_{0}, R_{i} \Phi_{\mu}) + \sum_{\mu, \nu \in \{+,-\}} \frac{c_{\mu}c_{\nu}}{4} \Q(R_{2}\Phi_{\mu}, R_{1}\Phi_{\nu})\right],
 \]
where $c_{+} := - i$, $c_{-}:= i$, $\Phi_{+} := \Phi$ and $\Phi_{-} := \overline{\Phi}$. In later context, we also use  $P_{+}\Phi$ to denote $\Phi$ and use $P_{-} \Phi$ to denote  $\overline{\Phi}$.

\begin{remark}
For general cases, the system (\ref{generalequation}) only
 differs from the system
(\ref{mainequation1}), the Hookean case,  at
 ``cubic and higher" level in the sense of decay rate.  
 To know this fact, we recommend interested readers to \cite{Lei2}[section 10] for more details.

 The approach used here in the Hookean case is robust enough to be applied to general isotropic incompressible elastodynamics cases. Firstly, we  consider the energy estimate part. On one hand, intuitively speaking, those cubic and higher order terms  will not cause any \emph{additional}
 obstructions because of the higher decay rate than quadratic terms. On the other hand, to be rigorous, 
 we usually
confronts the difficulty of ``losing derivatives'' in the energy estimate because of the quasilinear nature.  However, the system (\ref{generalequation}) indeed has the
 requisite symmetry properties to avoid losing derivatives. For more details about this fact, please  refer to \cite{Sideris1, Sideris4}. Lastly, we consider the $L^\infty$-estimate  part. We can estimate those additional cubic and higher order terms by the same method used to handle  the cubic terms inside  the equation satisfied by ${\Phi}$ (see  (\ref{reformulateofPhi})). 

\end{remark}

\subsection{Statement of the main result}\label{mainresult}
Before introducing our main theorem, we define   function spaces as follows,
$$
X_{k} (\R) := \big\{ f: \| f \|_{X_{k}} =  \| f \|_{H^{k}} + 
\| S f \|_{H^{\lfloor k/2\rfloor}}  + \| \Omega f \|_{H^{\lfloor k/2\rfloor}} < \infty\big\}, \quad k \in \mathbb{N},
$$
$$
Z:= \Big\{ f :  \| f \|_{Z} = \| (1 + |\xi|)^{N_{1}+6} \widehat{f}(\xi) \|_{L^{\infty}_{\xi}}< + \infty \Big\},
$$
$$
Z' := \Big\{   f:  \| f\|_{Z'} = \|  f \|_{W^{N_{1} + 4}} < \infty \Big\},\quad
Z'_{1} := \Big\{   f:  \| f\|_{Z'} = \|  f \|_{W^{N_{1} + 2}} < \infty \Big\},
$$
\[
\| f \|_{W^{\gamma}}:= \sum_{k\in\mathbb{Z}} 2^{\gamma\max\{k,0\}}\|P_{k} f\|_{L^\infty},
\]
where ``$S$"  is the scaling vector field and it is 
 defined as $S= t \p_{t} + x_{1} \po + x_{2} \pt = t \p_{t} + r \p_{r}$; and ``$\Omega$ "
is the rotational vector field and it is  defined as $\Omega = x_{2} \po - x_{1} \pt = x^{\bot} \cdot \nabla.$

The main result of this paper is as follows,
\begin{thm}\label{theorem1}
Let $N_{0}=300$,$N_{1}= N_{0}/2$, and a fixed  constant $p_{0}\in (0, 1/1000]$, which is sufficiently small. If initial data $(\tilde{\phi}_{0}, \tilde{\Phi}_{0})$ satisfies the constraint \textup{(\ref{constraint5})} and  the  following estimate,
\begin{equation}
\| (\tilde{\phi}_{0}, \tilde{\Phi}_{0})\|_{X_{N_{0}}} + \| (\tilde{\phi}_{0}, \tilde{\Phi}_{0})\|_{Z} = \epsilon_{0} \leq \bar{\epsilon},
\end{equation}
where $\bar{\epsilon}$ is a sufficiently small constant,  then there exists a unique global solution $(\phi_{0}, \Phi) \in C([0, \infty): X_{N_{0}}(\R))$ of the initial value problem: 
 \begin{equation}\label{maineqn}
\left\{\begin{array}{ll}
\p_{t} \phi_{0}  =  \mathcal{N}_{0} , &\\

\p_{t} \Phi + i |\nabla| \Phi = \mathcal{N}_{1}, &\\
\phi_{0} = \mathcal{N}_{2},\quad \phi_{0}(0)= \tilde{\phi}_{0}, \Phi(0) = \tilde{\Phi}_{0}. &\\

\end{array} \right.
\end{equation} 
Moreover, the following estimate holds,
 \begin{equation}\label{equation64210}
\sup_{t\in[0,\infty)} (1+t)^{-p_{0}} \| \Phi(t)\|_{X_{N_{0}}} + (1+t)^{1/2} \| \Phi(t)\|_{Z'} + (1+t)^{1/2-p_{0}} \| \phi_{0}(t)\|_{X_{N_{0}}} + (1+t) \| \phi_{0}(t)\|_{Z'_{1}}  \lesssim \epsilon_{0}.
\end{equation}
\end{thm}

\subsection{Previous results}
 
The long time behavior of isotropic elastodynamics mainly follows the
paradigm of nonlinear wave equation.  As one can see from system (\ref{maineqn}), it is of quasilinear wave type equation (technically speaking, it is of half wave type).

There is an extensive  literature devoted to the study of wave equations. For the purpose of giving concise introduction, we only list some representative works here.  Even for a
semilinear wave equation with  small smooth localized initial data, John  \cite{john}
showed that it can blow up in finite time. Meanwhile, if there exists ``null structure'' inside the nonlinearity, one might
 expect better behavior of the solutions. From the work of Klainerman \cite{klainerman1} and 
the work of Christodoulou \cite{christo}, we can see the role of ``null structure''. Also, the vector field method introduced by S. Klainerman in \cite{klainerman2} is a powerful tool to study the wave equations. 

A natural question is that whether there exists null structure for general isotropic elastodynamics (not limited to the incompressible case). The compressible isotropic elastodynamcis can be characterized by two  families of waves: fast pressure
waves and slower shear waves. In the incompressible case, the 
pressure wave does not present and the equations for shear waves possess an inherent null structure. So the answer for the incompressible case is yes, but the answer is not always yes for the compressible case. 

For the $3D$ \textit{compressible}
elastodynamcis, on the one hand, counterexamples to global existence were shown in \cite{john1} and \cite{Sideris}. In \cite{john1}, John showed that the nontrivial radial solutions  blow up  for the dynamics of an isotropic homogeneous
hyper-elastic medium with initial data that has sufficiently small compact support, if the equations satisfy a certain ``genuine nonlinearity condition''. And in \cite{Sideris}, Tahvildar-Zadeh showed the formation
of singularities of relativistic dynamics of isotropic hyperelastic solids for large initial data. On the other hand, in \cite{Sideris2}, it was first noticed that there exists a null structure within the class of physically
meaningful nonlinearities arising from the hyperelasticity assumption.

Now, let us focus on the incompressible case. We already know that there exists null structure inside the nonlinearity. Hence, we might expect better behavior of solution,  but does it strong enough to guarantee global solution? In $3D$, it is true, see the works of Sideris-Thomases  \cite{Sideris1, Sideris4}.

Naturally, one might wonder what will happen in $2D$. In \cite{Lei2}, Lei-Sideris-Zhou showed the almost global existence  for small initial data. The methods 
used in \cite{Lei2} are mainly the vector field method and the Alinhac's 
trick (the ghost weighted energy method). To see how the ghost weighted energy method works, interested readers may refer to \cite{Lei2} for details.

Here come the questions: does global solution exist for the $2D$ case? If it does, how to push the almost global existence to the global existence for the
$2D$ incompressible case? One might  try to work harder on the vector field method
to improve the previous result. But here, we are trying to provide another method and improve the understanding
of this problem, by using a modified energy method 
and a Fourier transform method with an appropriate choice of $Z$- norm. Instead of from the point of view in physical space, we consider this problem
in the frequency space.  We hope that the argument developed here can shine some lights on the the \emph{2D compressible case}. 

\subsection{The main idea of the proof}

We will use the     bootstrap argument to derive the global existence. It naturally falls into two parts: energy estimate ($L^2-$type) part and the improved dispersion estimate ($L^\infty-$type). For the energy estimate part, we will use a modified energy method to construct an appropriate modified energy, which grows at most polynomially in time. For the improved dispersion estimate part, we will use a Fourier transform method with a suitable choice of $Z$-norm to get sharp $1/t^{1/2}$ decay rate, which means the decay rates of the nonlinear solution and the linear solution are same.

The idea of modified energy method  was ever used in \cite{pierre1} by P. Germain and N. Masmoudi, where they called it the iterated energy method. In \cite{pierre1}, they used the Duhamel formula and did integration by parts in time once to convert the quadratic terms into cubic with price of ``${1}/{\textup{phase}}$", which effectively equivalent to utilizing normal form transformation. Similar idea has also been used in the work of Hunter-Ifrim-Tataru-Wong \cite{Tataru2}, where they called it the modified energy method.
Later,   this method has been applied further to the study of  water waves in the holomorphic coordinates formulation, see  
Hunter-Ifrim-Tataru \cite{Tataru1} and Ifrim-Tataru \cite{Ifrim} for details. 

Although, the idea of using normal form transformation to cancel out the quadratic terms is clear and straightforward. But, to construct an appropriate modified energy which can actually be used to close the energy estimate is highly non-trivial. For the normal transformation part, it is crucial to identify the strong null structure inside, otherwise, one only gets  a singular bilinear operator, which is not helpful to close the energy estimate. In subsection \ref{identifynullstructure}, we identify that there are at least two degrees of requisite angle inside the symbols, one of them comes from the incompressible condition and the other one comes from symmetries. As a result, the normal form transformation is not singular. After adding cubic correction terms that based on the normal form transformation, due to the quasilinear nature, there is a issue of losing a derivative for the quartic terms. To get around this issue, we also add  quartic correction terms into the energy, which will effectively cancel out the part that causes losing a derivative. Those quartic terms are impossible to find out or even works out  without identifying key cancellations inside the normal form transformation and symmetries inside the system, see subsection \ref{constructionofmodified} and subsection \ref{keycancellations} for more details.

 To make the Fourier transform method works properly, it is  important to choose  an appropriate $Z$- norm and get the improved $Z$- norm estimate, thereafter get the improved dispersion estimate to close the argument. Because the linear decay estimate we will prove in this paper has a similar structure as the one proved in the works of  Ionescu-Pusateri \cite{AIonescu1, AIonescu2}, inspired from their works, we choose the $Z$- norm used in \cite{AIonescu1}. For more details about how this argument works, see   \cite{pierre1,germain2,pierre2,AIonescu4, AIonescu5,AIonescu3,AIonescu1,AIonescu2, IP3,IP4}.

Lastly, we  mention the recent result of Lei \cite{Lei1}, which derives the global existence of 2D incompressible elastodynamics in \emph{Lagrangian coordinates} formulation, the system there will be a second order wave type equation instead of a coupled system. For those interested readers, please refer to \cite{Lei1} for more details. An advantage of using \emph{Lagrangian coordinates} is that, we can formulate the system in a nice way such that the bulk quadratic terms disappear. In the sense of decay rate, the nonlinearity is at \emph{cubic and higher} level. However, in the \emph{Eulerian coordinates} formulation, the bulk quadratic terms  do not disappear and this is a major drawback of working in \emph{Eulerian coordinates}.

\subsection{ Outline:} In section \ref{proofofmaintheorem}, we fix notations and prove the main theorem by assuming Proposition\,\ref{proposition1} and  Proposition \ref{proposition2} hold. In section \ref{energyestimate}, we first construct a modified energy and then use this modified energy to  do energy estimate and prove Proposition \ref{proposition1}. In section \ref{sectiondecay}, we prove the linear decay estimate, which is one of the key lemmas to derive the improved dispersion estimate.  In section \ref{improveddispersion}, we prove Proposition \ref{proposition2}.   In section \ref{asymptotic}, we will describe the asymptotic behavior of solution in a lower regularity Sobolev space. Lastly, in the appendix, we show how to derive (\ref{mainequation}) in details. 

\vspace{1\baselineskip}
\noindent \textbf{Acknowledgement.} The author thank his Ph.D advisor Alexandru Ionescu for helpful discussions and suggestions on improving this manuscript and thank Benoit Pausader for helpful discussions. The author wants to express gratitude to Yu Deng for letting him aware of the reference \cite{Lei2}  and helpful discussions. 
 
\section{Preliminary}\label{proofofmaintheorem}
 \subsection{Notations}
Fix an even smooth bump function $\tilde{\psi}: \mathbb{R}\rightarrow [0,1]$ that supports in $[-3/2,3/2]$ and equals to $1$ in $[-5/4,5/4]$, $k\in \mathbb{Z},  x\in \R$, we define
\begin{displaymath}
\psi_{k}(x) : = \tilde{\psi}(|x|/ 2^{k}) - \tilde{\psi} (|x| / 2^{k-1}),\quad \psi_{\leq k}(x)=\sum_{l\leq k}\psi_{l}(x),\quad \psi_{\geq k}(x) = \sum_{l\geq k} \psi_{\geq k}(x).
\end{displaymath}
The frequency projection operator $P_{k}, P_{\leq k}$, and $ P_{\geq k}$ are defined by the multipliers $\psi_{k}(\xi)$,  $\psi_{\leq k}(\xi)$ and  $\psi_{\geq k}(\xi)$ respectively, i.e.,
\[
\widehat{P_{k} f} (\xi) = \psi_{k}(\xi) \widehat{f}(\xi), \quad \widehat{P_{\leq k} f} (\xi) = \psi_{\leq k}(\xi) \widehat{f}(\xi), \quad\widehat{P_{\geq k} f} (\xi) = \psi_{\geq k}(\xi) \widehat{f}(\xi).
\]

For any real number $k\in \mathbb{R}$, we use $k+$, $k_{+}$ and $k_{-}$  to denote $k+\epsilon$, $\max\{k,0\}$ and $\min\{k, 0\}$ respectively throughout the paper, where $\epsilon$ is an arbitrary small constant. For any two numbers $A$ and $B$ and two absolute constants $c, C$, $c < C$, we denote
\[
A\sim B,\quad \emph{if}\,\,  c A \leq B \leq C A, \quad A \lesssim B, \,B\gtrsim A \quad \emph{if}\,\,  A \leq C B. 
\]
The Fourier transform of $f$ is defined as follows,
\[
\widehat{f}(\xi)= \int_{\R} e^{-ix\cdot \xi} f(x) d x.
\]
Besides $\widehat{f}(\xi)$, we also use $\mathcal{F}(f)(\xi)$ to denote the Fourier transform of $f$. We use $\mathcal{F}^{-1}(g)$ to denote the inverse  Fourier transform of $g$.

For any two vectors $\xi$ and $\eta$, we use notation $\angle(\xi,\eta)$ to denote the angle from $\eta$ to $\xi$. Hence $\angle(\xi,\eta)=-\angle(\eta,\xi)$ and $\angle(\xi, \eta)\in [-\pi, \pi]$. When $\angle(\xi, \eta)$ is very close to $\pm\pi$, i.e., $\xi$ and $\eta$ are almost parallel but in the opposite direction,  we say $\angle(\xi, \eta)$ is small in the sense that $\angle(\xi, -\eta)$ is small in this scenario. We say a quantity has $k$ degrees of angle, if this quantity is of size $\angle(\xi,  \eta)^{k}$ or $\angle(\xi,  -\eta)^{k}$  when $\angle(\xi, \eta)$ is small.  We mention that $\cos(\angle(\xi, \eta))$ is understood in the usual sense as $\cos(\angle(\xi, \eta))= \xi\cdot \eta/(|\xi||\eta|).$

We will use the convention that the symbol $q(\cdot, \cdot)$ of a bilinear operator $Q(\cdot, \cdot)$ is defined in the following  sense throughout this paper,
\[\mathcal{F} [Q(f,g)](\xi)= \frac{1}{4\pi^2} \int_{\mathbb{R}^2} \widehat{f}(\xi-\eta)\widehat{g}(\eta) q(\xi-\eta, \eta) d \eta, \quad \textup{where $f$ and $g$ are two well defined functions.} 
\]

\subsection{Bilinear estimate}
Define a class of symbol and its associated norms as follows,
\[
\mathcal{S}^\infty:=\{ m: \mathbb{R}^4\,\textup{or}\, \mathbb{R}^6 \rightarrow \mathbb{C}, m\,\textup{is continuous and }  \quad \| \mathcal{F}^{-1}(m)\|_{L^1} < \infty\},
\]
\[
\| m\|_{\mathcal{S}^\infty}:=\|\mathcal{F}^{-1}(m)\|_{L^1}, \quad \|m(\xi,\eta)\|_{\mathcal{S}^\infty_{k,k_1,k_2}}:=\|m(\xi, \eta)\psi_k(\xi)\psi_{k_1}(\xi-\eta)\psi_{k_2}(\eta)\|_{\mathcal{S}^\infty},
\]
\[
 \|m(\xi,\eta,\sigma)\|_{\mathcal{S}^\infty_{k,k_1,k_2,k_3}}:=\|m(\xi, \eta,\sigma)\psi_k(\xi)\psi_{k_1}(\xi-\eta)\psi_{k_2}(\eta-\sigma)\psi_{k_3}(\sigma)\|_{\mathcal{S}^\infty}.
\]
\begin{lemma}\label{boundness}
Given $m, m'\in\mathcal{S}^{\infty}$ and two well defined functions $f_1$, $f_2$, and $f_3$,
then the following estimates hold,
\begin{equation}\label{productsymbol}
\| m\cdot m'\|_{\mathcal{S}^\infty}\lesssim \|m \|_{\mathcal{S}^\infty} \| m'
\|_{\mathcal{S}^\infty},
\end{equation}
\[
\| \mathcal{F}^{-1}( \int_{\R} m(\xi, \eta)  \widehat{f_1}(\xi-\eta) \widehat{f_2}(\eta)  d \eta) (x) \|_{L^{r}}  \lesssim \|m\|_{\mathcal{S}^\infty} \| f_1\|_{L^{p}}  \| f_2\|_{L^{q}},\quad \frac{1}{r}=\frac{1}{p}+\frac{1}{q},
\]
\[
\| \mathcal{F}^{-1}( \int_{\R\times\R} m(\xi, \eta,\sigma)  \widehat{f_1}(\xi-\eta) \widehat{f_2}(\eta-\sigma)\widehat{f_3}(\sigma)  d \eta d \sigma) (x) \|_{L^{s}}  \lesssim \|m\|_{\mathcal{S}^\infty} \| f_1\|_{L^{p'}}  \| f_2\|_{L^{q'}} \| f_3\|_{L^{r'}},
\]
where $p', q'$, $r'$, and $s$ satisfy $1/s=1/p'+1/q'+1/r'.$
\end{lemma}
\begin{proof}
 The proof is standard, or one can see \cite{AIonescu2}[Lemma 5.2].
\end{proof}

To estimate the $\mathcal{S}^{\infty}_{k,k_1,k_2}$  or the $\mathcal{S}^{\infty}_{k,k_1,k_2,k_3}$ norms of symbols, we  constantly use the following lemma . 
\begin{lemma}\label{Snorm}
For $i\in\{2,3\}, $ if $f:\mathbb{R}^{2i}\rightarrow \mathbb{C}$ is a smooth function and $k_1,\cdots, k_i\in\mathbb{Z}$, then the following estimate holds,
\begin{equation}\label{eqn61001}
\| \int_{\mathbb{R}^{2i}} f(\xi_1,\cdots, \xi_i) \prod_{j=1}^{i} e^{i x_j\cdot \xi_j} \psi_{k_j}(\xi_j) d \xi_1\cdots  d\xi_i \|_{L^1_{x_1, \cdots, x_i}} \lesssim \sum_{m=0}^{2i+1}\sum_{j=1}^i 2^{m k_j}\|\p_{\xi_j}^m f\|_{L^\infty} .
 \end{equation}
\end{lemma}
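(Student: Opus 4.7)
The plan is to prove the estimate in three steps: rescale to reduce to the case of unit frequency, derive a pointwise decay bound via integration by parts, and integrate over a suitable partition of $\mathbb{R}^{2i}$.

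\textbf{Rescaling.} Set $y_j = 2^{k_j} x_j$ and $\tilde{\xi}_j = 2^{-k_j} \xi_j$, so that $e^{ix_j\cdot\xi_j} = e^{iy_j\cdot\tilde{\xi}_j}$ and $\psi_{k_j}(\xi_j) = \psi_0(\tilde{\xi}_j)$. The Jacobian factors from $d\xi_j \to d\tilde{\xi}_j$ and $dx_j \to dy_j$ cancel, so the $L^1$-norm in \eqref{eqn61001} is invariant under this change of variables. Putting $\tilde{f}(\tilde{\xi}) := f(2^{k_1}\tilde{\xi}_1, \ldots, 2^{k_i}\tilde{\xi}_i)$, one has $\|\p^{m}_{\tilde{\xi}_j} \tilde{f}\|_{L^\infty} = 2^{m k_j} \|\p^m_{\xi_j} f\|_{L^\infty}$, so it suffices to prove the bound in the normalized case $k_1 = \cdots = k_i = 0$.

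\textbf{Pointwise bound.} Let $\widetilde{F}(y)$ denote the rescaled oscillatory integral with amplitude $\tilde{f}\prod_l \psi_0(\tilde{\xi}_l)$. For any multi-index $\alpha$ in the $\tilde{\xi}_j$ variable with $|\alpha|=m$, the identity $y_j^{\alpha} e^{iy_j\cdot\tilde{\xi}_j} = (-i)^m \p_{\tilde{\xi}_j}^\alpha e^{iy_j\cdot\tilde{\xi}_j}$, followed by integration by parts (valid since the amplitude is compactly supported and there are no boundary terms) and the Leibniz rule (derivatives falling on $\psi_0$ give bounded factors, which are then integrated over the fixed compact annulus), yields
\[
|y_j^\alpha \widetilde{F}(y)| \lesssim \sum_{m' \leq m}\|\p^{m'}_{\tilde{\xi}_j} \tilde{f}\|_{L^\infty}.
\]
Summing over multi-indices $\alpha$ with $|\alpha| = 2i+1$ and combining with the trivial bound $|\widetilde{F}(y)| \lesssim \|\tilde{f}\|_{L^\infty}$ produces, for each $j\in\{1,\ldots,i\}$,
\[
|\widetilde{F}(y)| \lesssim (1+|y_j|)^{-(2i+1)} \sum_{m'\leq 2i+1} \|\p^{m'}_{\tilde{\xi}_j} \tilde{f}\|_{L^\infty}.
\]

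\textbf{Integration.} Partition $\mathbb{R}^{2i} = \bigcup_{j=1}^i R_j$ with $R_j = \{y : |y_j| \geq |y_l|\ \forall\,l\}$ and apply the pointwise bound above with the corresponding $j$. On $R_j$ the orthogonal slice $\{(y_l)_{l\neq j} : |y_l| \leq |y_j|\}$ has volume $\lesssim |y_j|^{2(i-1)}$, so polar coordinates in $y_j \in \R$ give
\[
\int_{R_j} (1+|y_j|)^{-(2i+1)}\, dy \lesssim \int_0^\infty r^{-(2i+1)} \cdot r^{2(i-1)} \cdot r\, dr < \infty.
\]
Summing over $j$ and undoing the rescaling yields \eqref{eqn61001}.

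\textbf{Main obstacle.} The sharpness of the derivative count $2i+1$ is the subtle point. A naive use of the scalar integration-by-parts operator $(1-\Delta_{\tilde{\xi}_j})^N$ based on the weight $(1+|y_j|^2)^N$ would require $N \geq i+1$, hence up to $2i+2$ derivatives of $f$, which is one more than claimed. Matching the stated exponent instead uses the componentwise identity $y_j^\alpha = (-i)^{|\alpha|}\p_{\tilde{\xi}_j}^\alpha$ to obtain $(1+|y_j|)^{-m}$ decay \emph{without squaring}, and $m = 2i+1$ is precisely the critical exponent for $L^1$-integrability over the region where the block $|y_j|$ dominates all the others.
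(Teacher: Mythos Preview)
Your proof is correct and follows essentially the same strategy as the paper: rescale to $k_j=0$, integrate by parts in a single block variable $\xi_j$ to gain $(1+|y_j|)^{-(2i+1)}$ decay, and then integrate (the paper packages the last two steps into the single pointwise bound $(1+|x_1|+|x_2|)^{2i+1}|\widetilde F|\lesssim\cdots$, which is equivalent to your partition into the regions $R_j$). One cosmetic slip: in your final displayed integral you should keep the weight as $(1+r)^{-(2i+1)}$ rather than $r^{-(2i+1)}$, since the latter is not integrable near $r=0$.
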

\begin{proof}
Let's first consider the case when $i=2$. Through scaling, it is  sufficient to prove above estimate for the case $k_1=k_2=0$. From the integration by parts in $\xi_1$ and $\xi_2$, we have the following pointwise estimate
\[
(1+|x_1|+|x_2|)^{5}\Big|\int_{\R\times\R} e^{i x_1\cdot \xi_1} e^{i x_2 \cdot \xi_2} f(\xi_1, \xi_2)\psi_{0}(\xi_1)\psi_{0}(\xi_2) d \xi_1  d\xi_2 \Big|\lesssim \sum_{m=0}^5\big[\|\p_{\xi_1}^m f\|_{L^\infty} + \| \p_{\xi_2}^m f \|_{L^\infty}\big],
\]
which is sufficient to finish the proof of (\ref{eqn61001}). We can prove the case when $i=3$ very similarly, hence we omit the details here.
\end{proof}

\subsection{Bootstrap assumption and proof of the main theorem}
Recall that  $\phi_0=\mathcal{N}_2$ (see (\ref{constraint5})) and $\mathcal{N}_{2}$ is  quadratic. Intuitively speaking, if we expect that energy is appropriately grow at rate $t^{p_0}$ and the decay rate of $\Phi$ is sharp, then  the $L^{2}$ norm of $\phi_{0}$ decays at a rate $1/t^{1/2-p_0}$ and the $L^{\infty}$ norm of $\phi_{0}$ decays at a rate $1/t$. Therefore, it motivates us to state the following bootstrap assumption,
\[
\sup_{t\in[0,T]} (1+t)^{-p_{0}} \| \Phi(t)\|_{X_{N_{0}}} + (1+t)^{1/2-p_{0}} \| \phi_{0}(t)\|_{X_{N_{0}}}  
\]
\begin{equation}\label{smallness}
+ (1 + t )^{1/2} \| \Phi\|_{Z'} + (1+t ) \| \phi_{0}\|_{Z'_{1}} + (1+t)^{-2p_0}\| \Phi(t)\|_{Z} \lesssim \epsilon_{1}:= \epsilon_0^{5/6}.
\end{equation}
Since the size of initial data is $\epsilon_0$, from the continuity of solution, we know the existence of $T>0$ in above bootstrap assumption.  In later context, without further annotations, the solution is considered in the time interval $[0,T] $ and satisfies above estimate.

 In section \ref{energyestimate}, we will prove the following proposition, 
\begin{proposition}[Energy estimate]\label{proposition1}
Under the bootstrap assumption \textup{(\ref{smallness})}, we have 
\begin{equation}\label{improvedenergyestimate}
\sup_{t\in[0,T]} (1+t)^{-p_0}\|(\phi_0, \Phi)\|_{X_{N_0}}\lesssim \epsilon_0 +\epsilon_1^2\lesssim \epsilon_0.
\end{equation}

\end{proposition}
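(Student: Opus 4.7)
The plan is a modified (iterated) energy argument combined with the vector field method. I would define the base energy
\begin{equation*}
E_0(t) = \|\Phi(t)\|_{H^{N_0}}^2 + \|\phi_0(t)\|_{H^{N_0}}^2 + \sum_{\Gamma\in\{S,\Omega\}} \big(\|\Gamma\Phi(t)\|_{H^{\lfloor N_0/2\rfloor}}^2 + \|\Gamma\phi_0(t)\|_{H^{\lfloor N_0/2\rfloor}}^2\big),
\end{equation*}
and first derive the evolution equations for $\Gamma\Phi$ and $\Gamma\phi_0$. Since $[\Omega,\partial_t+i|\nabla|]=0$ and $[S,\partial_t+i|\nabla|]=-(\partial_t+i|\nabla|)$, the half-wave structure is preserved; moreover $Q_{1,2}(f,g)=\det(\nabla f,\nabla g)$ is rotationally invariant and homogeneous of degree $2$ under scaling, so commuting $\Gamma$ through $\mathcal{N}_1$ yields again null forms acting on $(\Gamma\Phi,\Gamma\phi_0,\Phi,\phi_0)$ (up to harmless lower--order terms). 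This reduces the vector field bookkeeping to a finite family of equations of the same qualitative shape as the original one.

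The main obstruction is that differentiating $E_0$ in time produces a \emph{quadratic} contribution coming from $\widetilde{\mathcal{N}}_0,\widetilde{\mathcal{N}}_1,\widetilde{\mathcal{N}}_2$ in the $\Phi$ equation, which, without an extra factor of $t$--decay, cannot be integrated directly. I would cancel it by a cubic correction $E_{\mathrm{cor}}(t)$ obtained by integrating by parts in time against the phases
\begin{equation*}
\Phi_{\iota_1\iota_2\iota_3}(\xi,\eta) = \iota_1|\xi| + \iota_2|\xi-\eta| + \iota_3|\eta|,\qquad \iota_j\in\{+,-\},
\end{equation*}
thereby converting each quadratic contribution into a cubic one with a multiplier of the form $q_{1,2}(\xi-\eta,\eta)/\Phi_{\iota_1\iota_2\iota_3}(\xi,\eta)$ (plus cubic remainders produced by the time derivative falling on the profiles). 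In $2$D the phases $\Phi_{\iota_1\iota_2\iota_3}$ degenerate exactly in the collinear regime $\xi-\eta\parallel\eta$, but this is precisely where $q_{1,2}$ vanishes, so the ratios belong to $\mathcal{S}^\infty$ with controllable bounds verifiable via Lemma \ref{Snorm}. Since $E_{\mathrm{cor}}$ is cubic and the bootstrap assumption (\ref{smallness}) delivers smallness, $E(t):=E_0(t)+E_{\mathrm{cor}}(t)$ is equivalent to $E_0(t)$, and $\frac{d}{dt}E(t)$ consists only of trilinear and higher--order expressions.

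The resulting trilinear estimates are then carried out via Lemma \ref{boundness} with the standard ``put at most one factor in $X_{N_0}$, place the rest in $Z'$ or $Z'_1$'' strategy; the null structure of $Q_{1,2}$ is what prevents a derivative loss when the two inputs share a comparable high frequency, and whenever a $\phi_0$ appears we exploit $\phi_0=\mathcal{N}_2$ and the improved bootstrap bound $\|\phi_0\|_{X_{N_0}}\lesssim\epsilon_1(1+t)^{-1/2+p_0}$, $\|\phi_0\|_{Z'_1}\lesssim\epsilon_1(1+t)^{-1}$ to treat it as an effectively cubic factor. The target quantitative bound is
\begin{equation*}
\left|\frac{d}{dt}E(t)\right| \lesssim \epsilon_1^3\,(1+t)^{-1+2p_0},
\end{equation*}
which integrates to $E(t)\lesssim \epsilon_0^2 + \epsilon_1^3(1+t)^{2p_0}$ and, on taking the square root, yields (\ref{improvedenergyestimate}).

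The principal difficulty I anticipate is the top--regularity bookkeeping of the vector field commutators: after commuting $S$ or $\Omega$ through the null form and through the normal form transformation, one must verify that the effective trilinear symbols still admit a factorization through $q_{1,2}/\Phi_{\iota_1\iota_2\iota_3}$ so that no derivative is lost in the $\HH$ interactions, and simultaneously that the $\HL$ and $\LH$ regimes (where one input is at low frequency) do not produce a symbol that is singular at $\xi\to 0$. A secondary technical point is the endpoint $\HH\to\mathbf{Low}$ regime, where $|\xi|\ll|\eta|\sim|\xi-\eta|$; here the phase behaves like $|\xi|$ while the null symbol provides additional vanishing, but the ratio must still be bounded uniformly in $\mathcal{S}^\infty$, which requires a careful frequency--localized estimate via Lemma \ref{Snorm}.
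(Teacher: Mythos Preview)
Your overall strategy---a modified energy with cubic normal-form corrections, vector field commutation, and trilinear estimates yielding $(1+t)^{-1+2p_0}$ decay---matches the paper's. However, two genuine technical gaps would prevent the argument from closing as written.

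First, your claim that the ratios $q_{1,2}(\xi-\eta,\eta)/\Phi_{\iota_1\iota_2\iota_3}(\xi,\eta)$ lie in $\mathcal{S}^\infty$ is false. The null symbol $(\xi-\eta)\times\eta$ vanishes to \emph{first} order in the angle $\angle(\xi-\eta,\eta)$, whereas the resonant phase $|\xi|-|\xi-\eta|-|\eta|$ vanishes to \emph{second} order there, so the bare ratio is singular like $1/\theta$. The paper obtains the needed second degree of angular vanishing not from $q_{1,2}$ alone but by \emph{symmetrizing} the bilinear forms $\widetilde{Q}_{\mu,\nu}(\Phi_\mu,\Phi_\nu)$ over their inputs (see (\ref{twosame}), (\ref{twoopposite})); only after this step do the normal-form symbols $a_{\mu,\nu}$ satisfy $\|a_{\mu,\nu}\|_{\mathcal{S}^\infty_{k,k_1,k_2}}\lesssim 2^{\max\{k_1,k_2\}}$.

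Second, and more seriously, the quartic terms produced when $\partial_t$ hits a profile inside $E_{\mathrm{cor}}$ are not automatically tame: since $a_{\mu,\nu}$ carries a full derivative ($\sim 2^{\max\{k_1,k_2\}}$), the term $\int \overline{\Phi^\alpha}\,A_{\mu,\nu}(\mathcal{N}_1^{(\alpha,0)},\Phi_\kappa)$ loses one derivative at top order in the $\mathbf{High}\times\mathbf{Low}$ regime. Null structure of $Q_{1,2}$ does not help here because the loss is a pure size issue, not an angular one. The paper handles this by introducing a \emph{second-level} correction $E_{SCorr}$ (see (\ref{equation1340})) paired with the first, and then exploits a delicate algebraic cancellation among the symbols $a_{+,+}(\xi-\eta,\eta)+a_{+,+}(\eta,\xi-\eta)+\overline{a_{+,-}(\xi,-\eta)}$ (equation (\ref{equation63222})) to show the combined quartic contribution is in fact semilinear. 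Without this second correction and cancellation your trilinear/quartic estimates cannot close at regularity $N_0$.
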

\begin{proof}
It follows from the result of Lemma \ref{remainderestimate2} directly.
\end{proof}

 In section \ref{improveddispersion}, we will prove the following proposition, 
\begin{proposition}\label{proposition2}
Under the bootstrap assumption \textup{(\ref{smallness})} and the energy estimate \textup{(\ref{improvedenergyestimate})}, we can derive the following improved estimates, 
\begin{equation}\label{step1}
\sup_{t\in[0,T]} (1+t)^{-p_{0}} \| \Phi(t)\|_{X_{N_{0}}} + (1+t)^{1/2} \| \Phi(t)\|_{Z'} + (1+t)^{-2p_0} \|\Phi\|_{Z} \lesssim \epsilon_{0} + \epsilon_{1}^{2}\lesssim \epsilon_0,
\end{equation}
\begin{equation}\label{step2}
\sup_{t\in[0, T]} (1+t)^{1/2-p_{0}} \| \phi_{0}(t)\|_{X_{N_{0}}} + (1+t) \| \phi_{0}(t)\|_{Z'_{1}} \lesssim  (\epsilon_{0} + \epsilon_{1})^{2}\lesssim \epsilon_0^2.
\end{equation}
\end{proposition}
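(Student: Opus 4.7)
The plan is to treat inequalities \textup{(\ref{step1})} and \textup{(\ref{step2})} separately. The key structural observation is that $\Phi$ obeys a genuine half-wave dispersive evolution, while $\phi_0=\mathcal{N}_2$ holds as a pointwise algebraic identity in $(\phi_0,\Phi)$; consequently the $\phi_0$ bounds are purely bilinear consequences of the $\Phi$ bounds, whereas the $\Phi$ bounds are dispersive and constitute the bulk of the work.

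For \textup{(\ref{step1})}, the $X_{N_0}$ piece is immediate from Proposition \ref{proposition1}. For the $Z'$ (dispersive $L^\infty$) piece, I will pass to the profile $f(t):=e^{it|\nabla|}\Phi(t)$ and invoke the linear decay estimate of Section \ref{sectiondecay} to reduce matters to a bound of the schematic form
\[
\|\Phi(t)\|_{Z'}\lesssim (1+t)^{-1/2}\bigl(\|f(t)\|_{Z}+\|\Phi(t)\|_{X_{N_0}}\bigr),
\]
and hence to the $Z$-norm estimate. For the $Z$-norm I will apply Duhamel on the profile,
\[
\widehat{f}(t,\xi)=\widehat{f}(0,\xi)+\sum_{\mu,\nu}\int_0^t\!\!\int_{\R}e^{is\Psi_{\mu\nu}(\xi,\eta)}\,q_{\mu\nu}(\xi,\eta)\,\widehat{f_\mu}(s,\xi-\eta)\,\widehat{f_\nu}(s,\eta)\,d\eta\,ds,
\]
with phase $\Psi_{\mu\nu}(\xi,\eta)=|\xi|-\mu|\xi-\eta|-\nu|\eta|$ and null symbol $q_{\mu\nu}$ inherited from the $\Q$ structure of $\mathcal{N}_1$. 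I will dyadically decompose in time on intervals of length $\sim 2^m$ and in frequencies $|\xi|\sim 2^k$, $|\xi-\eta|\sim 2^{k_1}$, $|\eta|\sim 2^{k_2}$, split into the $\HH$, $\HL$ and $\LH$ paraproduct regimes, and in each region apply either a normal-form integration by parts in $s$ (in the non-time-resonant region) or a non-stationary integration by parts in $\eta$ (away from the spatial resonant set), converting the quadratic interaction into a cubic one that decays faster under the bootstrap, or saving a factor of $2^{-m}$. Contributions of $\phi_0$ inside $\mathcal{N}_1$ are more favorable because of the extra $(1+t)^{-1/2}$ decay carried by $\|\phi_0\|_{Z'_1}$.

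Inequality \textup{(\ref{step2})} follows directly from the identity $\phi_0=\mathcal{N}_2$ and Lemma \ref{boundness}. Paraproduct-decomposing each $\Q$ bilinear in $\mathcal{N}_2$ (the outer $|\nabla|^{-1}$ being absorbed by the $\Q$-derivatives), placing the high-frequency factor in $H^{N_0}$ respectively $W^{N_1+2}$ and the low-frequency factor in $Z'$ respectively $Z'_1$, yields $\|\phi_0\|_{H^{N_0}}\lesssim(1+t)^{-1/2+p_0}\epsilon_1^2$ and $\|\phi_0\|_{Z'_1}\lesssim(1+t)^{-1}\epsilon_1^2$. The vector-field components $S\phi_0,\Omega\phi_0$ are handled by differentiating the constraint $\phi_0=\mathcal{N}_2$, using that $\Q$ and $|\nabla|^{-1}$ are scaling- and rotation-equivariant, and absorbing the resulting $S\phi_0,\Omega\phi_0$ factors appearing on the right-hand side into the left-hand side via the bootstrap smallness of $\phi_0$.

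The main obstacle will be the $Z$-norm estimate on $\widehat{f}$, specifically the neighborhood of the space--time resonant set $\{\Psi_{\mu\nu}=0,\ \nabla_\eta\Psi_{\mu\nu}=0\}$, where both integration-by-parts schemes degenerate simultaneously. The saving feature is the null structure of $\Q$: the symbol satisfies $|q_{\mu\nu}(\xi-\eta,\eta)|\lesssim|\xi-\eta|\,|\eta|\,|\sin\angle(\xi-\eta,\eta)|$, and on the space--time resonant set $\xi-\eta$ and $\eta$ become collinear, so the null symbol vanishes to a sufficient order to compensate the $1/\Psi_{\mu\nu}$ singularity introduced by the normal-form step. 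Balancing the $2^m$ losses against the gains from the null symbol, from the angular and dyadic frequency localizations, and from the bootstrap decay of $\|\Phi\|_{Z'}$ and $\|\phi_0\|_{Z'_1}$ should yield the claimed $(1+t)^{2p_0}$ bound on $\|\Phi\|_Z$ and close the bootstrap.
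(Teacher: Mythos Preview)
Your high-level plan has the right ingredients, but there is a genuine gap in the $Z'$ step that prevents the bootstrap from closing as written.

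You propose to apply the linear decay lemma to the profile $f=e^{it|\nabla|}\Phi$, obtaining schematically
\[
\|\Phi(t)\|_{Z'}\lesssim (1+t)^{-1/2}\|f(t)\|_{Z}+(\text{energy terms}),
\]
and then to prove $\|f(t)\|_{Z}\lesssim (1+t)^{2p_{0}}\epsilon_{0}$. Feeding this back in gives only $(1+t)^{1/2}\|\Phi\|_{Z'}\lesssim (1+t)^{2p_{0}}\epsilon_{0}$, not the required $\lesssim\epsilon_{0}$ of \textup{(\ref{step1})}. The $(1+t)^{2p_{0}}$ growth of $\|f\|_{Z}$ is not an artifact of a crude estimate: when you integrate by parts in $s$ on the quadratic Duhamel term, the boundary contribution at time $t$ is exactly $-e^{it|\nabla|}\sum_{(\mu,\nu)\in\mathcal{S}}A_{\mu,\nu}(\Phi_{\mu}(t),\Phi_{\nu}(t))$, and the best available $Z$-bound on this term is $\|\Phi\|_{H^{N_{0}}}^{2}\lesssim (1+t)^{2p_{0}}\epsilon_{0}^{2}$ via $L^{2}\times L^{2}\to L^{1}_{x}\hookrightarrow L^{\infty}_{\xi}$.

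The paper's remedy is to pass to the normal-form variable $\tilde{\Phi}=\Phi+\sum_{(\mu,\nu)\in\mathcal{S}}A_{\mu,\nu}(\Phi_{\mu},\Phi_{\nu})$ and its profile $\tilde{g}=e^{it|\nabla|}\tilde{\Phi}$. The boundary term from integration by parts in $s$ is precisely the quadratic correction added to $\Phi$, so it \emph{cancels} in $\tilde{g}$, and one obtains $\|\tilde{g}(t)\|_{Z}\lesssim\epsilon_{0}$ uniformly in $t$ (this is the content of the second estimate in Lemma~\ref{Zquadratic}). Applying the linear decay lemma to $\tilde{g}$ then yields the sharp $(1+t)^{1/2}\|\tilde{\Phi}\|_{Z'}\lesssim\epsilon_{0}$, and one transfers back to $\Phi$ via $\|A_{\mu,\nu}(\Phi_{\mu},\Phi_{\nu})\|_{Z'}\lesssim\|\Phi\|_{Z'}^{3/2}\|\Phi\|_{H^{N_{0}}}^{1/2}$. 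A related point you have not addressed: the linear decay lemma requires control of $\||x|\nabla\tilde{g}\|_{H^{N_{1}-1}}$, which does not follow immediately from $\|\Phi\|_{X_{N_{0}}}$; the paper obtains it from $S,\Omega$ acting on $\tilde{g}$ together with a separate bound on $t\partial_{t}\tilde{g}$, and the cubic-and-higher structure of $\partial_{t}\tilde{g}$ (as opposed to the merely quadratic $\partial_{t}g$) is again what makes this step close.

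Your plan for \textup{(\ref{step2})} via the constraint $\phi_{0}=\mathcal{N}_{2}$ and bilinear estimates is correct and coincides with the paper's Lemma~\ref{improvedconstraint}.
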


\begin{proof}[Proof of the Main Theorem ]
Combining results in above two propositions, it is easy to see that, under the bootstrap assumption (\ref{smallness}),  we have
\[
\sup_{t\in[0,T]} (1+t)^{-p_{0}} \| \Phi(t)\|_{X_{N_{0}}} + (1+t)^{1/2-p_{0}} \| \phi_{0}(t)\|_{X_{N_{0}}}+(1+t)^{-2p_0} \|\Phi\|_{Z}
\]
\begin{equation}\label{equation016}
  + (1+t)^{1/2} \| \Phi(t)\|_{Z'} + 
(1+t) \| \phi_{0}(t)\|_{Z'_{1}} \lesssim \epsilon_{0}.
\end{equation}
Therefore, we can keep iterating the local result and extend the time interval of existence to the full time interval $[0,+\infty)$, i.e., solution exists globally. Moreover, the desired estimate (\ref{equation64210}) holds.
\end{proof}

\section{Energy Estimate}\label{energyestimate}
This section is devoted to prove Proposition \ref{proposition1}. Firstly, we identify null structures inside the system, which are the bases of doing this argument. Secondly, we identify the most problematic terms, which help us to figure out how to construct a modified energy. Finally, we use this  modified energy to do energy estimate and finish the proof of Proposition \ref{proposition1}.

\subsection{Identifying null structures inside the system}\label{identifynullstructure}
 
The goal of this subsection is to  check the symbols of quadratic terms very carefully to see whether there exist null structures and  how ``strong"  null structures are.  Based on the input types inside the quadratic terms, we decompose the nonlinearities $\mathcal{N}_{0}$ and $\mathcal{N}_{1}$  and the constraint $\mathcal{N}_{2}$ as follows, \[
 \mathcal{N}_{0} =  \sum_{\mu\in \{+,- \}}Q_{0,\mu}(\phi_{0}, \Phi_{\mu})  + \sum_{\mu, \nu\in \{+,-\}} Q_{\mu, \nu}(\Phi_{\mu}, \Phi_{\nu}),
\]
\[
\mathcal{N}_{1} = \tilde{Q}_{0,0} (\phi_{0}, \phi_{0}) + \sum_{\mu \in \{+,-\}} \tilde{Q}_{0,\mu} (\phi_{0}, \Phi_{\mu}) + \sum_{\mu, \nu \in \{+, -\}}\tilde{Q}_{\mu,\nu}(\Phi_{\mu}, \Phi_{\nu}),
\]
\[
\mathcal{N}_{2} = \tilde{Q}_{0,0}^{1} (\phi_{0}, \phi_{0}) + \sum_{\mu \in \{+,-\}} \tilde{Q}_{0,\mu}^{1} (\phi_{0}, \Phi_{\mu}) + \sum_{\mu, \nu \in \{+, -\}}\tilde{Q}_{\mu,\nu}^{1}(\Phi_{\mu}, \Phi_{\nu}),
\]
where
\[
Q_{0,\mu} (\phi_{0}, \Phi_{\mu}) = \frac{1}{ 2 |\nabla|} \Big(\Q( |\nabla| \phi_{0}, \Phi_{\mu}) - \Q(\ R_{1} \phi_{0}, \po \Phi_{\mu}) - \Q( R_{2} \phi_{0}, \pt \Phi_{\mu})  \Big),\quad \mu \in \{+, -\},
\]
\[
Q_{\mu, \nu}(\Phi_{\mu}, \Phi_{\nu}) = \frac{c_{\mu}}{4 |\nabla|} \Big(\Q(R_{1}\Phi_{\mu}, \pt \Phi_{\nu}) - \Q(R_{2}\Phi_{\mu}, \po \Phi_{\nu})\Big),\quad \mu, \nu \in \{+,-\} ,
\]
\[
\tilde{Q}_{0,0} (\phi_{0}, \phi_{0}) =   \sum_{ i = 1,2}\frac{R_{i}}{|\nabla|} \Big( \Q(\p_{i} R_{2}\phi_{0}, R_{2}\phi_{0}) + \Q(\p_{i} R_{1} \phi_{0}, R_{1} \phi_{0}) \Big) ,
\]
\[
 \tilde{Q}_{0,\mu} (\phi_{0}, \Phi_{\mu}) = \sum_{i=1,2}\frac{ c_{\mu}\,R_{i}}{2\, |\nabla|}\Big(  \Q(\p_{i} R_{1}\phi_{0}, R_{2} \Phi_{\mu})  -  \Q(\p_{i} R_{2}\phi_{0}, R_{1} \Phi_{\mu} ) + 
\]
\begin{equation}\label{eqn901}\Q(\p_{i}R_{2}\Phi_{\mu}, R_{1} \phi_{0}) -  \Q(\p_{i} R_{1}\Phi_{\mu}, R_{2} \phi_{0} )\Big) + \frac{i}{2 |\nabla|} \Big(\Q(R_{2} \phi_{0}, \po \Phi_{\mu} ) - \Q(R_{1} \phi_{0}, \pt \Phi_{\mu} )\Big) ,
\end{equation}
\[
\tilde{Q}_{\mu,\nu}(\Phi_{\mu}, \Phi_{\nu}) = \sum_{i, j=1,2}\frac{R_{i}}{4\, |\nabla|} \Big( c_{\mu} c_{\nu} \Q(\p_{i} R_{j} \Phi_{\mu}, R_{j} \Phi_{\nu}) - \Q(\p_{i} \Phi_{\mu}, \Phi_{\nu})  \Big) + 
\]
\begin{equation}\label{eqn900}
\frac{i\, c_{\mu} }{4 |\nabla|} \Big( \Q(|\nabla| \Phi_{\mu}, \Phi_{\nu}) - \Q(R_{1} \Phi_{\mu}, \po\Phi_{\nu} ) -\Q(R_{2}\Phi_{\mu}, \pt \Phi_{\nu}) \Big),\quad \mu, \nu \in \{+,-\},
\end{equation}
\[
\tilde{Q}_{0,0}^{1} (\phi_{0}, \phi_{0}) = \frac{1}{2|\nabla|} [ \Q(R_{2}\phi_{0}, R_{1} \phi_{0})-\Q(R_{1}\phi_{0}, R_{2} \phi_{0})], \]
\[  \tilde{Q}_{0,\mu}^{1} (\phi_{0}, \Phi_{\mu}) = \sum_{i=1}^{2}\frac{c_{\mu}}{2} \n \Q(R_{i} \phi_{0}, R_{i} \Phi_{\mu}),
\]
\begin{equation}\label{eqn902}
\tilde{Q}_{\mu,\nu}^{1}(\Phi_{\mu}, \Phi_{\nu}) =\frac{c_{\mu}c_{\nu}}{8} \n[\Q(R_{2}\Phi_{\mu}, R_{1}\Phi_{\nu})-\Q(R_{1}\Phi_{\mu}, R_{2}\Phi_{\nu})].
\end{equation}
After tedious calculations, we can show  that the associated symbols of above bilinear operators are given as follows,
\begin{equation}\label{eqn904}
m_{0,\mu}(\xi - \eta, \eta) = -\frac{\xi \cdot (\xi-\eta)}{2 |\xi||\xi-\eta|} \big( (\xi - \eta) \times \eta \big),\quad m_{\mu, \nu} (\xi - \eta, \eta) = \frac{c_{\mu}}{4 | \xi |} \frac{1}{|\xi - \eta|} \big( (\xi - \eta) \times \eta \big)^{2},\end{equation}

\begin{equation}\label{eqn905}
\tilde{m}_{0, \mu} (\xi -\eta, \eta)= -\frac{c_{\mu}}{2 |\xi |^{2}}\Big(\frac{\xi \cdot (\xi - \eta)}{|\xi - \eta| |\eta|} -  \frac{\xi \cdot \eta}{|\xi - \eta| |\eta|} \Big)  \Big( (\xi - \eta) \times \eta \Big)^{2} - \frac{i}{2 |\xi | |\xi - \eta|} \Big( (\xi - \eta) \times \eta \Big)^{2},
\end{equation}

\begin{equation}\label{equation13203}
\tilde{m}_{\mu, \nu}(\xi -\eta, \eta) = \Big(- \frac{c_{\mu} c_{\nu}}{4 |\xi|} \frac{\xi \cdot (\xi -\eta)}{|\xi |} \frac{(\xi - \eta) \cdot \eta}{|\xi - \eta| |\eta|} - \frac{\xi \cdot (\xi - \eta)}{4 |\xi|^{2}} -\frac{ i c_{\mu}\,\xi\cdot (\xi-\eta)}{4|\xi| |\xi-\eta|} \Big) \Big( (\xi - \eta) \times \eta \Big),
\end{equation}
\begin{equation}\label{eqn3002}
\tilde{m}_{0,0}(\xi - \eta, \eta) = - \frac{ \xi \cdot (\xi - \eta)}{|\xi |^{2}}\frac{(\xi - \eta) \cdot \eta}{|\xi - \eta| |\eta|} \big( (\xi - \eta) \times \eta \big),\quad \tilde{m}_{0,0}^{1}(\xi-\eta, \eta) =  \frac{-\big((\xi-\eta) \times \eta \big)^2}{2|\xi| |\xi-\eta||\eta|} ,
\end{equation}
\begin{equation}\label{eqn3001}
\tilde{m}_{0,\mu}^{1}(\xi-\eta, \eta) = - \frac{c_{\mu}}{2} \frac{(\xi-\eta)\cdot  \eta}{|\xi-\eta| |\eta| |\xi|} \big((\xi-\eta) \times \eta \big),\quad  \tilde{m}_{\mu,\nu}^{1}(\xi-\eta, \eta) = \frac{c_{\mu}c_{\nu}}{8}  \tilde{m}_{0,0}^{1}(\xi-\eta, \eta).
\end{equation}
Recall that $c_{+}=-i$ and $c_{-}=i$ as defined in the introduction. As an example, we will show detail computations for (\ref{eqn901}). which is very typical. All other cases can be computed in the same way.  From explicit formula in (\ref{eqn901}), we have
\[
\tilde{m}_{0, \mu}(\xi-\eta, \eta) = \sum_{j=1,2}\frac{ic_{\mu} \xi_{j}}{2|\xi|^2}\Big[i(\xi-\eta)_{j}[(\xi-\eta)\times \eta] \frac{(\xi-\eta)_1 \eta_2-(\xi-\eta)_2\eta_1}{|\xi-\eta||\eta|} 
\]
\[
- i \eta_{j} [(\xi-\eta)\times \eta] \frac{(\xi-\eta)_1\eta_{2} - (\xi-\eta)_2\eta_1}{|\xi-\eta||\eta|} \Big] + \frac{i}{2 |\xi|} \big[ [(\xi-\eta)\times\eta] \frac{(\xi-\eta)_2 \eta_1-(\xi-\eta)_1 \eta_2}{|\xi-\eta|}  \big] 
\]
\[
= \frac{-c_{\mu}}{2 |\xi|^2}\Big[ \frac{\xi\cdot(\xi-\eta)}{|\xi-\eta||\eta|}- \frac{\xi\cdot \eta}{|\xi-\eta||\eta|}\Big] \Big( (\xi-\eta)\times\eta\Big)^2 - \frac{i}{2|\xi||\xi-\eta|}\Big( (\xi-\eta)\times\eta \Big)^2,
\]
therefore (\ref{eqn905}) holds.

 From above detailed  formulas of symbols, we can see that  all symbols vanishes when $(\xi-\eta)\parallel \eta$. Hence, indeed, there are null structures inside nonlinearities. But does those null structures, especially the  one in the symbol $\tilde{m}_{\mu, \nu}(\cdot, \cdot)$, strong enough?  The answer depends on how strong we need them to be. 
 
In the later modified energy estimate part, we will see that it is very crucial to gain at least two degrees of angle for $\tilde{m}_{\mu, \nu}(\cdot, \cdot)$ in certain scenarios. Otherwise,  the symbol will be singular after dividing the phase.

 In the following, we will show that we can gain one more degree of angle from symmetries. However, this angle depends on the fact that whether $(\xi-\eta)$ and $\eta$ are in the same direction.  To be more precise, we divide into different cases based on the types of phases, which are determined by the types of quadratic terms. More precisely, the phases are defined as follows,
\[
\Phi_{\mu, \nu}(\xi, \eta):= |\xi|-\mu |\xi-\eta| - \nu |\eta|, \quad \mu, \nu\in\{+,-\}.
\]
 
Before we proceed, we mention that the types of phase and the discussion here are also related to the normal form transformations that we will do later (see also subsection \ref{normalform}).

\begin{enumerate}
\item[(i)] For the phase of type $|\xi|-|\xi-\eta|-|\eta|$, the corresponding symbol is $\tilde{m}_{+,+}(\cdot, \cdot)$. In this case, the phase vanishes when $\angle(\xi, \eta)=\angle(\xi-\eta, \eta)=0$ and $|\xi|$ has comparable size of $|\xi-\eta|$. 

\item[(ii)] For the phase of type $|\xi|-|\xi-\eta|+|\eta|$ or $|\xi|+|\xi-\eta|-|\eta|$, the corresponding symbol is $\tilde{m}_{+,-}(\cdot, \cdot)$ or $\tilde{m}_{-, +}(\cdot, \cdot)$. For the first type, the phase vanishes when $\angle(\xi, -\eta)=\angle(\xi-\eta, -\eta)=0$. For the second type, the phase vanishes when $\angle(\xi, \eta)=\angle(\xi-\eta, -\eta)=0$. In whichever case, we have $\angle(\xi-\eta, -\eta)=0.$

\item[(iii)] For the phase of type $|\xi|+|\xi-\eta|+|\eta|$, it does not vanish and has a lower bound regardless whether $\xi$ and $\eta$ are parallel or not. For this case, it is not necessary to gain two degrees of angle.
\end{enumerate}
For case (i), since two inputs of $\tilde{Q}_{+,+}(\cdot, \cdot)$ are of the same type,  we can utilize self symmetry to see that the symbol of  bilinear form $\tilde{Q}_{+,+}(\cdot, \cdot)$ is also given as follows,
\[
\tilde{m}'_{+,+}(\xi-\eta, \eta):=[\tilde{m}_{+,+}(\xi-\eta, \eta) + \tilde{m}_{+,+}(\eta, \xi-\eta)]/2
\]
\[
= \Big(- \frac{c_{+} c_{+}}{8 |\xi|} \frac{\xi \cdot (\xi -\eta)}{|\xi |} \frac{(\xi - \eta) \cdot \eta}{|\xi - \eta| |\eta|} - \frac{\xi \cdot (\xi - \eta)}{8 |\xi|^{2}} -\frac{ i c_{+}\,\xi\cdot (\xi-\eta)}{8 |\xi| |\xi-\eta|} \Big) \Big( (\xi - \eta) \times \eta \Big) 
\]
\[
+\Big(- \frac{c_{+} c_{+}}{8|\xi|} \frac{\xi \cdot \eta}{|\xi |} \frac{(\xi - \eta) \cdot \eta}{|\xi - \eta| |\eta|} - \frac{\xi \cdot \eta}{8  |\xi|^{2}} -\frac{ i c_{+}\,\xi\cdot\eta}{8 |\xi| |\eta|} \Big) \Big( \eta \times (\xi-\eta) \Big)
\]
\begin{equation}\label{twosame}
= \frac{-\xi\cdot(\xi-2\eta)}{8 |\xi|^2}\underbrace{(1-\cos(\angle(\xi-\eta, \eta)))\Big((\xi-\eta)\times\eta\Big)}_{\text{three degrees of angle $\angle(\xi-\eta, \eta)$}} - \frac{\xi}{8|\xi|}\cdot \underbrace{\Big(\frac{\xi-\eta}{|\xi-\eta|}-\frac{\eta}{|\eta|}\Big) \Big( (\xi - \eta) \times \eta \Big)}_{\text{two degrees of angle $\angle(\xi-\eta, \eta)$}}. 
\end{equation}
For case (ii), we  couple term $\tilde{Q}_{+,-}(\Phi, \overline{\Phi})$ with term $\tilde{Q}_{-,+}(\overline{\Phi}, \Phi)$ and  define $\widetilde{Q}_{+,-}(\Phi, \overline{\Phi}):= \tilde{Q}_{+,-}(\Phi, \overline{\Phi}) +\tilde{Q}_{-,+}(\overline{\Phi}, {\Phi}) $. Its  corresponding symbol is given as follows, 
\[
\tilde{m}'_{+,-}(\xi-\eta, \eta)= \tilde{m}_{+,-}(\xi-\eta, \eta) + \tilde{m}_{-,+}(\eta, \xi-\eta)
\]
\[
= \Big(- \frac{c_{+} c_{-}}{4 |\xi|} \frac{\xi \cdot (\xi -\eta)}{|\xi |} \frac{(\xi - \eta) \cdot \eta}{|\xi - \eta| |\eta|} - \frac{\xi \cdot (\xi - \eta)}{4 |\xi|^{2}} -\frac{ i c_{+}\,\xi\cdot (\xi-\eta)}{4 |\xi| |\xi-\eta|} \Big) \Big( (\xi - \eta) \times \eta \Big) 
\]
\[
+\Big(- \frac{c_{-} c_{+}}{4|\xi|} \frac{\xi \cdot \eta}{|\xi |} \frac{(\xi - \eta) \cdot \eta}{|\xi - \eta| |\eta|} - \frac{\xi \cdot \eta}{4|\xi|^{2}} -\frac{ i c_{-}\,\xi\cdot\eta}{4 |\xi| |\eta|} \Big) \Big( \eta \times (\xi-\eta) \Big)
\]
\begin{equation}\label{twoopposite}
= \frac{-\xi\cdot(\xi-2\eta)}{4|\xi|^2}\underbrace{(1+\cos(\angle(\xi-\eta, \eta)))\Big((\xi-\eta)\times\eta\Big)}_{\text{three degrees of angle $\angle(\xi-\eta, -\eta)$}} - \frac{\xi}{4 |\xi|}\cdot \underbrace{\Big(\frac{\xi-\eta}{|\xi-\eta|}+\frac{\eta}{|\eta|}\Big) \Big( (\xi - \eta) \times \eta \Big)}_{\text{two degrees of angle $\angle(\xi-\eta, -\eta)$}}. 
\end{equation}
To make notations consistent, we define \begin{equation}\label{equation13202}
\tilde{m}'_{-,-}(\xi-\eta, \eta):= \tilde{m}_{-,-}(\xi-\eta, \eta), \quad \widetilde{Q}_{\mu, \nu}(\Phi_{\mu}, \Phi_{\nu}):=  \tilde{Q}_{\mu, \nu}(\Phi_{\mu}, \Phi_{\nu}), \quad (\mu, \nu)\in\{(+,+), (-,-)\}.
\end{equation}
To sum up, we can reformulate the equation satisfied by $\Phi$ as follows,
\begin{equation}\label{mainequation10}
\p_t \Phi + i |\nabla |\Phi = \mathcal{N}_1=\tilde{Q}_{0,0}(\phi_{0}, \phi_{0}) + \sum_{\mu\in\{+,-\}} \tilde{Q}_{0,\mu} (\phi_{0}, \Phi_{\mu}) + \sum_{(\mu,\nu)\in\mathcal{S}} \widetilde{Q}_{\mu, \nu}(\Phi_{\mu}, \Phi_{\nu}),
\end{equation}
where $\mathcal{S}:=\{(+,+),(+,-), (-,-)\}.$  Inside the symbol of $\widetilde{Q}_{+,+}(\cdot, \cdot)$, we can gain two degrees of angle when frequencies of two inputs are parallel and in the \emph{same direction}. Inside the symbol of $\widetilde{Q}_{+,-}(\cdot, \cdot)$, we can gain two degrees of angle when frequencies of two inputs are parallel and in the \emph{opposite direction}. We can always gain one degree of angle regardless whether two frequencies are in the same direction or not.

\subsection{Normal form transformation}\label{normalform}
The first step of constructing the modified energy is to find out  the normal form transformation. More precisely, we are looking for a normal form transformation $\Phi \rightarrow \tilde{\Phi}$, such that the equation satisfied by $\tilde{\Phi}$ is cubic and higher. Let
\begin{equation}\label{normalformtran}
\tilde{\Phi} = \Phi + \sum_{(\mu, \nu) \in \mathcal{S}} A_{\mu, \nu}(\Phi_{\mu}, \Phi_{\nu}),
\end{equation}
where $A_{\mu, \nu}(\cdot, \cdot)$, $(\mu, \nu)\in \mathcal{S}$,  is an unknown bilinear operators to be determined.
\noindent Recall the equation (\ref{mainequation10}) satisfied by $\Phi$, then we have the following,
\[
\p_{t} \tilde{\Phi} + i |\nabla| \tilde{\Phi} =  \sum_{(\mu, \nu) \in \mathcal{S}}\widetilde{Q}_{\mu,\nu}(\Phi_{\mu}, \Phi_{\nu}) +  \sum_{(\mu, \nu) \in \mathcal{S}} i |\nabla |A_{\mu, \nu}(\Phi_{\mu}, \Phi_{\nu} )- \]
\[ \Big(\sum_{(\mu, \nu) \in \mathcal{S}} A_{\mu, \nu}( i a_{\mu} |\nabla| \Phi_{\mu}, \Phi_{\nu})  + A_{\mu, \nu}( \Phi_{\mu}, i |\nabla| a_{\nu} \Phi_{\nu})\Big) + \textit{cubic and higher} ,
\]
where $a_{+} =1$ and $a_{-} =-1$. To cancel out quadratic terms, it is sufficient if the following equality holds for all $(\mu, \nu)\in \mathcal{S}$,
\begin{equation}\label{condition1}
\widetilde{Q}_{\mu, \nu} (\Phi_{\mu}, \Phi_{\nu}) + i |\nabla| A_{\mu, \nu} (\Phi_{\mu}, \Phi_{\nu}) - i a_{\mu} A_{\mu, \nu}(|\nabla| \Phi_{\mu}, \Phi_{\nu}) - i a_{\nu} A_{\mu, \nu} (\Phi_{\mu}, |\nabla| \Phi_{\nu}) = 0,
\end{equation}
which gives us,
\begin{equation}\label{symbolofnormal}
a_{\mu, \nu} (\xi -\eta, \eta) = \frac{i}{ (|\xi | -a_{\mu} |\xi -\eta| - a_{\nu}|\eta|)} \tilde{m}'_{\mu, \nu}(\xi -\eta, \eta).
\end{equation}

\subsection{The  $S^{\infty}$ norm estimate of symbols}\label{guide}

In this section, we first discuss how to estimate the $\mathcal{S}^{\infty}_{k,k_1,k_2}$ norm for a general symbol which depends on the angular variable, and then we estimate the $S^\infty_{k,k_1,k_2}$ norm of $a_{\mu, \nu}(\xi-\eta, \eta)$, $(\mu, \nu)\in \mathcal{S}$. The method stated here can be easily generalized to the three independent variables setting. One can estimate $\mathcal{S}^{\infty}_{k,k_1,k_2,k_3}$ norm of a symbol very similarly.  For later stated $\mathcal{S}^{\infty}$ norm estimates of  symbols, readers can refer to this subsection for help to see the validities of those stated estimates.

The essential tool we use is Lemma \ref{Snorm}. Beside the pointwise estimate, we also have to estimate the derivatives of symbols. Since the angular variable of symbol may appear in the denominator, it's not so straightforward to see the upper bound of $S^\infty_{k,k_1,k_2}$ norm directly. Hence, we provide the following guide of doing estimates to readers, which consists of two essential steps and an example.

\textbf{Step $1$:\quad} Choose the independent variables.\\
Since there are only two independent variables among three variables: $\xi$, $\xi-\eta$, and $\eta$. It's important to choose the right two independent variables, otherwise the estimate can be unnecessarily rough. We choose the least two of the three variables as the independent variables. For example, for a symbol as follows,
\[
m(\xi, \eta)\psi_k(\xi)\psi_{k_1}(\xi-\eta)\psi_{k_2}(\eta),
\]
variables $\xi$ and $\eta$ are not always  the independent variables. If $|\xi-\eta|\leq |\xi|\sim |\eta|$, then we let $\xi-\eta$ and $\xi$ to be independent variables first and then apply Lemma \ref{Snorm}.

\textbf{Step 2:\quad} View the angular part as a whole part when we have angular variable in the denominator.

The main point of this step is that we can reformulate the aforementioned symbols as follows,
\[
m(\xi, \eta)= \tilde{m}(\xi,\eta) f(\frac{\eta}{|\eta|}\times \frac{\xi-\eta}{|\xi-\eta|} , (1,0)\times\frac{\eta}{|\eta|})=\tilde{m}(\xi,\eta) f(\sin(\angle(\xi-\eta, \eta)) , \sin(\angle(\eta, (1,0)))),
\]
where $m(\xi, \eta)$ is one of the aforementioned symbol, $\tilde{m}(\xi, \eta)$ is a regular symbol \footnote{For a regular symbol, after choosing the independent variables as in Step $1$, the right hand side of
(\ref{eqn61001}) is comparable to the $L^\infty$ norm of itself.} and $f:\mathbb{R}^2\rightarrow \mathbb{R}$ is a smooth function.

\textbf{An example:}\quad We choose $a_{+,+}(\xi-\eta, \eta)$ as a representative example, other symbols can be done similarly. From (\ref{twosame}) and (\ref{symbolofnormal}), we have
\[
a_{+,+}(\xi-\eta,\eta) = \frac{i\tilde{m}_{+,+}'(\xi-\eta, \eta)}{|\xi|-|\xi-\eta|-|\eta|}= \frac{i(|\xi|+|\xi-\eta|+|\eta|)}{2|\xi-\eta||\eta|(\cos(\angle(\xi-\eta, \eta))-1)} \tilde{m}_{+,+}'(\xi-\eta, \eta)
\]
\[
=  \frac{\xi\cdot(\xi-2\eta)}{8 |\xi|^2}\frac{i(|\xi|+|\xi-\eta|+|\eta|)}{2|\xi-\eta||\eta|} \Big((\xi-\eta)\times\eta\Big) 
+\Big( \frac{-i(|\xi|+|\xi-\eta|+|\eta|)}{2(\cos(\angle(\xi-\eta, \eta))-1)} \frac{\xi}{16\pi|\xi|}\cdot \Big(\frac{\xi-\eta}{|\xi-\eta|}-\frac{\eta}{|\eta|}\Big)\Big) \times \]
\[
\Big(\frac{ (\xi - \eta)}{|\xi-\eta|} \times \frac{\eta}{|\eta|}\Big)
= \underbrace{\frac{i\xi\cdot(\xi-2\eta)(|\xi|+|\xi-\eta|+|\eta|) }{16 |\xi|^2}\Big(\frac{(\xi-\eta)}{|\xi-\eta|}\times\frac{\eta}{|\eta|}\Big) }_{\text{Part I: A regular symbol}}
\]
\[
+\underbrace{\frac{-i(|\xi|+|\xi-\eta|+|\eta|)\xi_1}{16 |\xi|} \frac{\cos(\angle(\xi-\eta,(1,0)))-\cos(\angle(\eta, (1,0)))}{(\cos(\angle(\xi-\eta, \eta))-1)} \Big(\frac{ (\xi - \eta)}{|\xi-\eta|} \times \frac{\eta}{|\eta|}\Big)}_{\text{Part II}}
\]
\[
+ \underbrace{\frac{-i(|\xi|+|\xi-\eta|+|\eta|)\xi_2}{16 |\xi|} \frac{\sin(\angle(\xi-\eta,(1,0)))- \sin(\angle(\eta,(1,0)))}{(\cos(\angle(\xi-\eta, \eta))-1)} \Big(\frac{ (\xi - \eta)}{|\xi-\eta|} \times \frac{\eta}{|\eta|}\Big)}_{\text{Part III}},
\]
where $\xi_{j}, j\in\{1,2\}$, is the $j$-th component of vector $\xi$ and we used the following fact in above computation,
\[
\frac{\xi-\eta}{|\xi-\eta|}-\frac{\eta}{|\eta|} = (\cos(\angle(\xi-\eta,(1,0)))-\cos(\angle(\eta, (1,0))), \sin(\angle(\xi-\eta,(1,0)))- \sin(\angle(\eta,(1,0))) ).
\]
It remains to check ``
Part II" and ``Part III".  Using the Step $2$, we rewrite them as follows, 
\[
\textup{Part II} = \underbrace{\frac{-i(|\xi|+|\xi-\eta|+|\eta|)\xi_1}{16 |\xi|}}_{\text{A regular symbol}}  \underbrace{f( \frac{\eta}{|\eta|}\times \frac{\xi-\eta}{|\xi-\eta|}), (1,0)\times \frac{\eta}{|\eta|})}_{\text{Angular part}}, 
\]
\[
\textup{Part III} = \underbrace{\frac{-i(|\xi|+|\xi-\eta|+|\eta|)\xi_2}{16 |\xi|}}_{\text{A regular symbol}}  \underbrace{g( \frac{\eta}{|\eta|}\times \frac{\xi-\eta}{|\xi-\eta|}), (1,0)\times \frac{\eta}{|\eta|})}_{\text{Angular part}}, 
\]
where
\[
f(x,y)= \frac{-x\big(\cos(\sin^{-1}(y)+\sin^{-1}(x))- \cos(\sin^{-1}(y)) \big)}{(\cos(\sin^{-1}(x))-1)}, \]
\[g(x,y)= \frac{-x\big(\sin(\sin^{-1}(y)+\sin^{-1}(x))- y) \big)}{(\cos(\sin^{-1}(x))-1)}, \]
and $ x, y \in[-1,1]$. We have the following expansions when $x, y$ are very close to $0$,
\[ f(x,y)= x+ 2y -\frac{y^2x+x^2 y}{2} + o(x^4) + o(y^4), \quad \textup{when $|x|,|y|\ll 1$},
\]
\[ g(x,y)= -2 + y^2 + xy+\frac{4x^2-2x^2y^2+x^4}{8}+\mathcal{O}(x^4) +\mathcal{O}(y^4), \quad\textup{when $|x|,|y|\ll 1$}.
\]

We first use the rules in Step $1$ to find out the independent variables and then use the Chain rule and Leibniz's rule, as a result, we can see that the ``Angular parts" of ``Part II" and ``Part III"
are also  regular symbols. From above discussion, it's easy to see the following estimate holds 
\[
\big| a_{+,+}(\xi-\eta,\eta)\psi_k(\xi)\psi_{k_1}(\xi-\eta)\psi_{k_2}(\eta)\big| \lesssim 2^{\max\{k_1,k_2\}}.
\]
Then, we can check the derivatives of $a_{+,+}(\xi-\eta, \eta)$ with respect to the independent variables, from Lemma \ref{Snorm}, eventually the following estimate holds,
\[
\| a_{+,+}(\xi-\eta, \eta)\|_{\mathcal{S}_{k,k_1,k_2}^\infty}\lesssim 2^{\max\{k_1,k_2\}}.
\]

We can perform similar analysis for  all other symbols and have the following lemma, 
\begin{lemma}\label{sizenormal}
For any admissible $k, k_1, k_2\in\mathbb{Z}$, we have the following estimate for any $(\mu, \nu)\in \mathcal{S}$,
\begin{equation}\label{sizeofnormal1}
\| a_{\mu,\nu}(\xi-\eta, \eta)\|_{\mathcal{S}^\infty_{k,k_1,k_2}} \lesssim 2^{\max\{k_1,k_2\}}, \quad \| a_{-,-}(\xi-\eta, \eta)\|_{\mathcal{S}^\infty_{k,k_1,k_2}} \lesssim 2^{\min\{k_1,k_2\}}
\end{equation}
\begin{equation}\label{equation12001}
\| a_{+,-}(\xi-\eta, \eta)\|_{\mathcal{S}^\infty_{k,k_1,k_2}} \lesssim 2^{k_1}, \quad \textup{if $k_1\leq k_2-5$},
\end{equation}
\begin{equation}\label{equation12000}
\| q(\xi, \eta)\|_{\mathcal{S}^\infty_{k,k_1,k_2}} \lesssim 2^{k_1+k_2},\quad \| \widetilde{q}(\xi, \eta)\|_{\mathcal{S}^\infty_{k,k_1,k_2}} \lesssim 2^{\min\{k_1, k_2\}},
\end{equation}
where $q(\cdot, \cdot)$ is the symbol of bilinear operator $Q\in \{Q_{0,\mu}, Q_{\mu,\nu}, 
\tilde{Q}_{0,0}, \tilde{Q}_{0,\mu}, \tilde{Q}_{\mu,\nu}\}$ and $\tilde{q}(\cdot, \cdot)$ is the symbol of  bilinear operator  $\tilde{Q} 
\in \{ \tilde{Q}_{0}^{1}, \tilde{Q}_{0,\mu}^{1}, \tilde{Q}_{\mu, \nu}^{1} \} $.

\end{lemma}
\begin{proof}
The desired estimates (\ref{sizeofnormal1}), (\ref{equation12001}), and (\ref{equation12000}) follows from Lemma \ref{Snorm} and above discussion. We mention that for estimate (\ref{equation12001}) and the second estimate of (\ref{sizeofnormal1}), we used the facts that $|\xi|+|\xi-\eta|+|\eta|$ is always big and $|\xi|-|\xi-\eta|+|\eta|$ is not small when $|\xi-\eta|\ll |\eta|$. More precisely, from Lemma \ref{Snorm}, the following estimate holds, 
\[
\| \frac{1}{|\xi|+|\xi-\eta|+|\eta|}\|_{\mathcal{S}^\infty_{k,k_1,k_2}} \lesssim 2^{-\max\{k_1,k_2\}}, 
\]
\[
\| \frac{1}{|\xi|-|\xi-\eta|+|\eta|}\|_{\mathcal{S}^\infty_{k,k_1,k_2}} \lesssim 2^{-\max\{k_1,k_2\}},\quad \textup{if $k_1\leq k_2-5$}.
\]
Combining above estimates with (\ref{equation12000}), from (\ref{productsymbol}) in Lemma \ref{boundness}, we can see these improved estimates hold.
\end{proof}

\subsection{The usual energy estimate}

 To find out what cubic correction terms to add, we first do the usual energy estimate. Due to the quasilinear nature of the system (\ref{maineqn}),   we have to avoid losing derivatives when doing energy estimate.\subsubsection{Energy in terms of $\phi_0$ and $\Phi$.}
We define the usual energy as follows, 
\begin{equation}\label{equation1100}
E(t):=  E^{N_0}(t)+E^{N_1}(t)  + E^{0}(t), \quad E^{N_0}(t):= \sum_{k+j=N_0, 0\leq k,j \in \mathbb{Z}}\frac{1}{2}\big[ \int_{\R} |\p_1^{k}\p_2^{j}\phi_0|^2 +  |\p_1^{k}\p_2^{j}\Phi|^2 \big],
\end{equation}
\[
 E^{N_1}(t):=  \sum_{k+j=N_1, 0\leq k,j \in \mathbb{Z}}\frac{1}{2}\big[ \int_{\R} |\p_1^{k}\p_2^{j} S\phi_0|^2 +  |\p_1^{k}\p_2^{j}S\Phi|^2 +|\p_1^{k}\p_2^{j} \Omega \phi_0|^2 +  |\p_1^{k}\p_2^{j}\Omega \Phi|^2 \big], 
\]
\begin{equation}\label{equation10010}
E^{0}(t):= \frac{1}{2}\Big[\int_{\R} |\phi_0|^2 + |\Phi|^2 + |S\phi_0|^2 +|\Omega \phi_0|^2+ |S\Phi|^2 + |\Omega \Phi|^2\Big]. 
\end{equation}
For  a tuple of nonnegative integers $\alpha=(\alpha_1, \alpha_2, \alpha_3, \alpha_4), |\alpha| \geq 0$ and $|\alpha_3|+|\alpha_4|\leq 1$,  we use $\Gamma^{\alpha}$ to denote $\p_{1}^{\alpha_1}\p_2^{\alpha_2} S^{\alpha_3}\Omega^{\alpha_4}$ and use $f^\alpha$ to denote $\Gamma^\alpha f$ for a well defined function $f$. For a bilinear term $T(f,g)$, we use
\[
T^{(\beta, \gamma)}(f,g):= T(f^{\beta}, g^{\gamma}) + T(f^{\gamma}, g^{\beta}), \]
to denote the terms that one of the inputs is hit by $\Gamma^\beta$ and the other input is hit by $\Gamma^\gamma$. As
\begin{equation}\label{eqn1203}
[S, \p_{t} + i |\nabla| ] =- (\p_{t} + i |\nabla|),\quad [\Omega, \p_{t}  + i |\nabla|] = 0, 
\end{equation}
after applying $\Gamma^\alpha,\Gamma^{\alpha}S,$ and $ \Gamma^{\alpha}\Omega$
to the system of equations (\ref{maineqn}), we can derive equations satisfied by $\phi^\alpha_0 \in\{ \Gamma^\alpha \phi_0, \Gamma^{\alpha} S \phi_{0}, \Gamma^{\alpha}\Omega \phi_{0}\}$ and $  \Phi^\alpha\in \{ \Gamma^\alpha \Phi, \Gamma^{\alpha}S\Phi, \Gamma^{\alpha} \Omega \Phi \}$ as follows,
\begin{equation}\label{maineqn6}
 \left\{\begin{array}{ll}
  \p_t \phi^\alpha_0 = \mathcal{N}^{(\alpha,0)}_0  +  \mathbf{Err}_0^{\alpha} &\\
&\\
\p_t \Phi^\alpha + i |\nabla| \Phi^\alpha =  \mathcal{N}^{(\alpha,0)}_1 + \mathbf{Err}_1^{\alpha}, & \\
 \end{array}\right.
\end{equation}
where 
$\mathbf{Err}_0^{\alpha} $ and $ \mathbf{Err}_1^{\alpha} $ are  good error terms, which  consist of terms in which two inputs are not hit by the entire $\Gamma^\alpha$
 derivative and the commutator terms if ``$S$" or ``$\Omega$" is applied. More precisely, for $i\in\{0,1\}$,
 \begin{equation}\label{equation1480}
 \mathbf{Err}_{i}^{\alpha}= \sum_{|\gamma|\leq |\beta|< |\alpha|, \beta +\gamma =\alpha} { \alpha \choose \beta}\mathcal{N}^{(\beta,\gamma)}_{i} + \textup{commutator terms if $|\alpha_3|+|\alpha_4|=1$},\quad { \alpha \choose \beta} := \prod_{j=1}^4{ \alpha_j \choose \beta_j}.
 \end{equation}

We mention that  we need to utilize the  commutation rules for the vector fields to derive the system (\ref{maineqn6}). For readers' conveniences, we derive and discuss   those commutations rules before ending this subsubsection.

For  two smooth well defined functions $h_{1}$ and $ h_{2}$ and a bilinear operator $Q(\cdot,\cdot)$ with symbol $q(\xi,\eta)$, which is  homogeneous of degree $c$, i.e.,
\begin{equation}\label{equation1220}
q(\lambda\xi, \lambda\eta)= \lambda^c q(\xi, \eta),
\end{equation}
 we have 
 \begin{equation}\label{equation1021}
S Q( h_{1}, h_{2}) = Q( S h_{1}, h_{2} ) + Q( h_{1}, S h_{2}) - c Q(h_{1}, h_{2}), \quad c \in \{1, 2\}.
 \end{equation}
 To prove (\ref{equation1021}), it is  sufficient to  consider the `$x\cdot \nabla $' part of scaling vector `$S$' as `$t\p_{t}$' part of `$S$' distributes as usual derivatives. We have
\[
\mathcal{F} \big( x\cdot \nabla Q(h_{1}, h_{2}) \big) (\xi) =  [-\xi \cdot \nabla_{\xi}-2I] \Big( \int_{\R} q(\xi, \eta) \widehat{h}_{1}(\xi-\eta) \widehat{h}_{2} (\eta) \, \, d \, \eta \Big)
\]

\[
=  \int_{\R} [(-\xi \cdot \nabla_{\xi}-2)q(\xi, \eta)] \widehat{h}_{1}(\xi-\eta)  \widehat{h}_{2} (\eta) \, d \, \eta + 
\]
\[  \int_{\R} q(\xi, \eta) \eta \cdot \nabla_{\eta} \widehat{h}_{1}(\xi-\eta)  \widehat{h}_{2} (\eta) \, d \, \eta -  \int_{\R} q(\xi, \eta) (\xi-\eta)\cdot \nabla_{\xi} \widehat{h}_{1}(\xi-\eta)  \widehat{h}_{2} (\eta) \, d \, \eta 
\]

\[
= \int_{\R} q(\xi, \eta) [- (\xi-\eta)\cdot \nabla_{\xi} -2]\widehat{h}_{1}(\xi-\eta)  \widehat{h}_{2} (\eta) \, d \, \eta +  \int_{\R} q(\xi, \eta)  \widehat{h}_{1}(\xi-\eta)  [-\eta \cdot \nabla_{\eta} -2] \widehat{h}_{2} (\eta) \, d \, \eta 
\]

\begin{equation}\label{equation1022}
- \int_{\R} ( \xi \cdot \nabla_{\xi} + \eta \cdot \nabla_{\eta} ) q(\xi, \eta) \widehat{h}_{1}(\xi-\eta)  \widehat{h}_{2} (\eta)  \, d \, \eta.
\end{equation}
After taking one derivative with respect to $\lambda$ for identity  (\ref{equation1220}) and evaluating it at $\lambda =1$, we have
\begin{equation}
\xi \cdot \nabla_{\xi} q(\xi, \eta) + \eta \cdot \nabla_{\eta} q (\xi, \eta) = c q (\xi, \eta). 
\end{equation}
Therefore, 
\begin{equation}
x\cdot \nabla Q(h_{1}, h_{2}) = Q (x\cdot \nabla h_{1}, h_{2}) + Q(h_{1}, x\cdot \nabla h_{2}) - c\,Q(h_{1}, h_{2}),
\end{equation}
which implies  that (\ref{equation1021}) holds.  Very similarly, we have the following identity for the rotational vector field,
\begin{equation}\label{equation1023}
\Omega Q( h_{1}, h_{2}) = Q(\Omega h_{1}, h_{2}) + Q(h_{1}, \Omega h_{2}) - Q' (h_{1}, h_{2}),
\end{equation}
where bilinear operator $Q'(\cdot, \cdot)$ is defined by the following symbol, 
\begin{equation}\label{equation1362}
q' (\xi ,\eta ) = \xi^{\bot} \cdot \nabla_{\xi} q (\xi, \eta) + \eta^{\bot} \cdot \nabla_{\eta} q (\xi, \eta).
\end{equation}
The size of symbol $q'(\xi, \eta)$ is comparable to size of $q(\xi ,\eta)$. Moreover, $q'(\xi,\eta)$ has null structure as long as $q(\xi,\eta)$ has null structure. To see this point, we only have to check the case when both $\nabla_{\xi}$ and $\nabla_{\eta}$ hits the angular part, for example,  $(\xi-\eta)\times \eta $, we have\begin{equation}
\xi^{\bot} \cdot \nabla_{\xi } \big( (\xi-\eta)\times \eta\big) + \eta^{\bot} \cdot \nabla_{\eta}  \big( (\xi-\eta)\times \eta\big) = \xi\cdot \eta - \xi\cdot \eta =0,
\end{equation}
which infers that bilinear operator $Q'(\cdot, \cdot)$ also has two degrees of angle inside.

  From (\ref{equation1021}) and (\ref{equation1023}), we know that, modulo the good error commutator terms,  we can distribute the vector fields $S$ and $\Omega$ as usual derivatives. 
  
  We remark that commutator terms come from two sources: (i) from the commutation rules in (\ref{eqn1203}); (ii) from the commutation rules in (\ref{equation1021}) and (\ref{equation1023}). It is safe to put those commutator terms into the error terms,
 because the commutator terms only depend on the $\phi_0$ and $\Phi$ and their  top regularities are all $N_0$, which is much bigger than $N_1$.  
 
\subsubsection{The usual energy estimate} Recall the definition of $E^{N_0}(t)$ and the system of equations (\ref{maineqn6}), we have
\begin{equation}\label{equation1230}
 \frac{d}{d t} E^{N_0} (t) =  \sum_{\begin{subarray}{l}
 |\alpha|= N_{0}, \alpha_3=\alpha_4=0\\
 \end{subarray}} \textup{Re} \Big( \int \overline{\phi^\alpha_0} \mathbf{Err}_0^{\alpha}  + \overline{\Phi^\alpha} \mathbf{Err}_1^{\alpha} \Big) + \textup{Re} \Big( \int \overline{\phi^\alpha_0} \mathcal{N}^{(\alpha,0)}_0   + \overline{\Phi^\alpha} \mathcal{N}^{(\alpha,0)}_1  \Big).
\end{equation}
We will first show that cancellation happens for the second integral of the right hand side of (\ref{equation1230}), hence it does not lose derivatives.

From the system of equation satisfied by $\psi$, $G_1$, and $G_2$ in (\ref{changevariables}) and (\ref{eqn400}), we have
\[
\mathcal{N}^{(\alpha,0)}_0  = R_1 \widetilde{\mathcal{N}}_{2}^{(\alpha, 0)} -R_2 \widetilde{\mathcal{N}}_{1}^{(\alpha,0)}, \quad \mathcal{N}^{(\alpha,0)}_1= \widetilde{\mathcal{N}}_{0}^{(\alpha, 0)} + i \big[R_1 \widetilde{\mathcal{N}}_{1}^{(\alpha,0)} + R_2 \widetilde{\mathcal{N}}_{2}^{(\alpha,0)}\big].
\]
\begin{equation}\label{equation62220}
\widetilde{\mathcal{N}_{0}}= Q(\psi, \psi)- Q(G_1,G_1)-Q(G_2,G_2), \quad\widetilde{\mathcal{N}}_{1}= Q_{1,2}(G_1,\psi), \quad \widetilde{\mathcal{N}}_{2}= Q_{1,2}(G_2,\psi),
\end{equation}
where bilinear operator $Q(\cdot, \cdot)$ is defined by the symbol as follows, 
\[
q(\xi-\eta, \eta) = -\frac{\xi\cdot(\xi-\eta)}{|\xi|^2} (\xi-\eta)\times \eta.
\]

 We first reformulate the second integral of (\ref{equation1230}) in terms of $\psi$,$G_1$, and $G_2$, because  operators in (\ref{equation62220}) are much easier than operators in (\ref{mainequation10}) (computations are less involving) and we can use the fact that  $\psi $, $G_1,$ and $G_2$
are all real. More precisely, from (\ref{changevariables}), we have
\[
\textup{Re} \Big( \int \overline{\phi^\alpha_0} \mathcal{N}^{(\alpha,0)}_0   + \overline{\Phi^\alpha} \mathcal{N}^{(\alpha,0)}_1  \Big) = \int \psi^{\alpha} \widetilde{\mathcal{N}}_{0}^{(\alpha, 0)} + G_1^{\alpha} \widetilde{\mathcal{N}}_{1}^{(\alpha,0)} + G_2^{\alpha} \widetilde{\mathcal{N}}_{2}^{(\alpha,0)}  = \int \psi^{\alpha}[Q(\psi^{\alpha}, \psi) 
\]
\begin{equation}\label{equation9000}
+ Q(\psi, \psi^{\alpha})]+\sum_{i=1,2} G_i^\alpha Q_{1,2}(G_i^{\alpha}, \psi)
+  \psi^{\alpha}\big[-Q(G_i^\alpha, G_i) - Q(G_i,G_i^{\alpha})\big] 
+ G_i^{\alpha} Q_{1,2}(G_i, \psi^{\alpha}),
\end{equation}
After utilizing symmetries on the Fourier side to switch the role of $\xi$ and $\xi-\eta$, we have
\[
(\ref{equation9000})= 
\sum_{i=1,2}\int \int \Big[ \overline{\widehat{G_i^\alpha}(\xi) }\widehat{G_i^\alpha}(\xi-\eta)\widehat{\psi}(\eta) q_1(\xi-\eta, \eta) + \overline{\widehat{\psi^{\alpha}}(\xi)}\widehat{G_i^\alpha}(\xi-\eta)\widehat{G_i}(\eta) q_2(\xi-\eta)\Big] d \xi d \eta  
\]
\begin{equation}\label{equation61220}
+  \int\int  \overline{\widehat{\psi^{\alpha}}(\xi)}\widehat{\psi^{\alpha}}(\xi-\eta)\widehat{\psi}(\eta) q_3(\xi-\eta, \eta) d\xi d \eta,
\end{equation}
where
\[
q_1(\xi-\eta, \eta)= \big[-(\xi-\eta)\times (\eta) - \xi \times (-\eta)\big]/2= 0,\quad
q_2(\xi-\eta, \eta)= -(-\eta)\times \xi- q(\xi-\eta, \eta)-q(\eta, \xi-\eta)
\]
\begin{equation}\label{equation1241}
=\eta \times \xi+ \frac{\xi\cdot(\xi-2\eta)}{ |\xi|^2} (\xi-\eta)\times \eta= \frac{-2\xi\cdot \eta}{|\xi|^2} (\xi-\eta)\times \eta,
\end{equation}
\[
q_3(\xi-\eta, \eta) =  [q(\xi-\eta, \eta)+q(\eta, \xi-\eta)]/2 + [q(\xi, -\eta) + q(-\eta, \xi)]/2\]
\begin{equation}\label{equation1008}
= \big( \frac{\xi\cdot \eta}{ |\xi|^2}+ \frac{(\xi-\eta)\cdot \eta}{ |\xi-\eta|^2}\big) (\xi-\eta)\times \eta= -\big(q_2(\xi-\eta, \eta)+ q_2(\xi, -\eta))/2.
\end{equation}
From Lemma \ref{Snorm}, the following estimate holds from above explicit formulas, 
\begin{equation}\label{equation1483}
\| q_2(\xi-\eta, \eta)\|_{\mathcal{S}^{\infty}_{k,k_1,k_2}} + \| q_3(\xi-\eta, \eta)\|_{\mathcal{S}^{\infty}_{k,k_1,k_2}} \lesssim 2^{2k_2}.
\end{equation}
After replacing $\psi$, $G_1$, and $G_2$ by $\phi_0$ and $\Phi$ through (\ref{equation143}), we have 
\[
(\ref{equation61220}) = \int \sum_{i=1,2} \psi^\alpha Q_2(G_i^\alpha, G_i) + \psi^\alpha Q_3(\psi^\alpha, \psi)= 
\textup{Re}\Big( \int \sum_{\mu, \nu, \kappa}\frac{1}{8} \Phi^{\alpha}_{\mu}Q_3(\Phi_{\nu}^{\alpha}, \Phi_{\kappa})+\sum_{i=1,2} \frac{1}{8}(\Phi^{\alpha} 
\]
\[
+\overline{\Phi^\alpha}) Q_{2}\big(\Gamma^\alpha((-1)^{i}2R_{3-i} \phi_0 + c_{+} R_i \Phi +  c_{-} R_i\overline{\Phi}),(-1)^{i}2R_{3-i} \phi_0 +  c_{+}R_i\Phi+c_{-}R_i \overline{\Phi} ) \Big),
\]
where bilinear operators $Q_2(\cdot, \cdot)$ and $Q_3(\cdot, \cdot)$ are defined by the symbols $q_2(\cdot, \cdot)$ and $q_3(\cdot, \cdot)$ respectively.  To sum up, we have
\[
\frac{d}{d t} E^{N_0} (t) =  \sum_{\begin{subarray}{l}
 |\alpha|= N_{0}\\
 \alpha_3=\alpha_4=0\\
 \end{subarray}} \textup{Re} \Big( \int \overline{\phi^\alpha_0} \mathbf{Err}_0^{\alpha}  + \overline{\Phi^\alpha} \mathbf{Err}_1^{\alpha} \Big) + \textup{Re}\Big( \int \sum_{\mu, \nu, \kappa\in\{+,-\}}\frac{1}{8} \Phi^{\alpha}_{\mu}Q_3(\Phi_{\nu}^{\alpha}, \Phi_{\kappa}) \]
\begin{equation}\label{eqn1009}
+\sum_{i=1,2} \frac{1}{8}(\Phi^{\alpha} + \overline{\Phi^\alpha})Q_{2}\big(\Gamma^\alpha((-1)^{i}2R_{3-i} \phi_0 +  c_{+}R_i\Phi +  c_{-} R_i\overline{\Phi}),(-1)^{i}2R_{3-i} \phi_0 +  c_{+}R_i\Phi+  c_{-} R_i\overline{\Phi} ) \Big).
\end{equation}

With minor modifications, we can perform the same procedure for $E^{N_1}(t)$ and have the following, 
\[
 \frac{d}{d t} E^{N_1} (t) =  \sum_{\begin{subarray}{l}
 |\alpha|= N_{1}+1\\
 \alpha_3+\alpha_4=1\\
 \end{subarray}} \textup{Re} \Big( \int \overline{\phi^\alpha_0} \mathbf{Err}_0^{\alpha}  + \overline{\Phi^\alpha} \mathbf{Err}_1^{\alpha} \Big)+ \textup{Re}\Big( \int \sum_{\mu, \nu, \kappa\in\{+,-\}}\frac{1}{8} \Phi^{\alpha}_{\mu}Q_3(\Phi_{\nu}^{\alpha}, \Phi_{\kappa})
 \]
 \begin{equation}\label{equation1003}
+\sum_{i=1,2} \frac{1}{8}(\Phi^{\alpha} + \overline{\Phi^\alpha})Q_{2}\big(\Gamma^\alpha((-1)^{i}2R_{3-i} \phi_0 +  c_{+}R_i\Phi +  c_{-}R_i \overline{\Phi}),(-1)^{i}2R_{3-i} \phi_0 +  c_{+} R_i\Phi +  c_{-} R_i\overline{\Phi} ) \Big).
\end{equation}

\subsection{Identifying the worst cubic terms} The worst cubic terms inside the derivative of energy $E(t)$, which decay slowly, only depend on $\Phi$.  We identify them in this section.  From (\ref{equation10010}), we have 
\[
\textup{ worst cubic terms of } \frac{d}{d t} E^0(t) = \sum_{(\mu, \nu)\in \mathcal{S}} \textup{Re} \Big(\int \overline{\Phi} \widetilde{Q}_{\mu, \nu}(\Phi_{\mu}, \Phi_{\nu})+\overline{\Omega\Phi} \big[\widetilde{Q}_{\mu, \nu}(\Omega\Phi_{\mu}, \Phi_{\nu})+ \widetilde{Q}_{\mu, \nu}(\Phi_{\mu}, \Omega\Phi_{\nu}) \]
\begin{equation}\label{equation1248}
-\widetilde{Q}'_{\mu, \nu}(\Phi_{\mu}, \Phi_{\nu})\big]+
 \overline{S\Phi}\big[\widetilde{Q}_{\mu, \nu}(S\Phi_{\mu}, \Phi_{\nu}) + \widetilde{Q}_{\mu, \nu}(\Phi_{\mu}, S\Phi_{\nu})-\widetilde{Q}_{\mu, \nu}(\Phi_{\mu}, \Phi_{\nu})\big]\Big),
\end{equation}
where the bilinear operator $\widetilde{Q}'_{\mu, \nu}(\cdot, \cdot)$, follows from (\ref{equation1362}), is defined by the following symbol 
\[
(\xi^{\bot}\cdot \nabla_{\xi} + \eta^{\bot}\cdot \nabla_{\eta}) \tilde{m}'_{\mu, \nu}(\xi-\eta ,\eta).
\]
From (\ref{eqn1009}),  we have
\[
\textup{worst cubic terms of } \frac{d}{d t} E^{N_0}(t) = \sum_{\begin{subarray}{l}
 |\alpha|= N_{0}, \alpha_3=\alpha_4=0\\
 \end{subarray}} \sum_{|\gamma|\leq |\beta|< |\alpha|, \beta +\gamma =\alpha} { \alpha \choose \beta}\sum_{(\mu, \nu)\in \mathcal{S}} \textup{Re} \Big( \int \overline{\Phi^\alpha} \widetilde{Q}_{\mu, \nu}(\Phi_{\mu}^{\beta}, \Phi_{\nu}^{\gamma}) \Big)
\]
\[
+\sum_{\mu, \nu, \kappa\in\{+,-\}} \frac{1}{8}\textup{Re} \Big(\int  \sum_{i=1,2}  c_{\nu} c_{\kappa}
\Phi_{\mu}^{\alpha} Q_2(R_i\Phi_{\nu}^{\alpha}, R_i \Phi_{\kappa}) +  \Phi_{\mu}^{\alpha} Q_3(\Phi_{\nu}^{\alpha}, \Phi_{\kappa})\Big).
\]
From the identity (\ref{equation1008}), we have
\[
\sum_{\mu, \nu, \kappa\in\{+,-\}} \frac{1}{8}\textup{Re} \Big(\int  \sum_{i=1,2}  c_{\nu} c_{\kappa}
\Phi_{\mu}^{\alpha} Q_2(R_i\Phi_{\nu}^{\alpha}, R_i \Phi_{\kappa}) +  \Phi_{\mu}^{\alpha} Q_3(\Phi_{\nu}^{\alpha}, \Phi_{\kappa})\Big)\]
\[= \sum_{\mu, \nu, \kappa\in\{+,-\}} \frac{1}{8}\textup{Re} \Big(\int  \sum_{i=1,2}  c_{\nu}
 c_{\kappa}\Phi_{\mu}^{\alpha} Q_2(R_i\Phi_{\nu}^{\alpha}, R_i \Phi_{\kappa}) -  \Phi_{\mu}^{\alpha} Q_2(\Phi_{\nu}^{\alpha}, \Phi_{\kappa})/2-\Phi_{\nu}^{\alpha} Q_2(\Phi_{\mu}^{\alpha}, \Phi_{-\kappa})/2\Big)\]
 \[
=  \sum_{\mu, \nu, \kappa\in\{+,-\}}  \frac{1}{8}
\textup{Re} \Big(\int\sum_{i=1,2}  c_{\nu} c_{\kappa}
\Phi_{\mu}^{\alpha} Q_2(R_i\Phi_{\nu}^{\alpha}, R_i \Phi_{\kappa})-\Phi_{\mu}^{\alpha} Q_2(\Phi_{\nu}^{\alpha}, \Phi_{\kappa})\Big) \]
\begin{equation}\label{equation1244}
= \sum_{\mu, \nu, \kappa\in\{+,-\}}  \frac{1}{8} \textup{Re} \Big(\int \Phi_{\mu}^{\alpha} Q^4_{\nu, \kappa}(\Phi_{\nu}^{\alpha},  \Phi_{\kappa})\Big)=  \sum_{ \nu, \kappa\in\{+,-\}} \frac{1}{4} \textup{Re} \Big(\int \overline{\Phi^{\alpha}} Q^4_{\nu, \kappa}(\Phi_{\nu}^{\alpha},  \Phi_{\kappa})\Big),
\end{equation}
where the bilinear operator  $Q_4(\cdot, \cdot)$ is defined by the following symbol,
\begin{equation}\label{equation1352}
q^4_{\nu, \kappa}(\xi-\eta, \eta)= -q_2(\xi-\eta, \eta)\big( 1+c_{\nu}c_{\kappa} \frac{(\xi-\eta)\cdot \eta}{|\xi-\eta||\eta|} \big)= -q_2(\xi-\eta, \eta)(1-a_{\nu}a_{\kappa}\cos(\angle(\xi-\eta, \eta)).
\end{equation}
To sum up, we have
\[
\textup{worst cubic terms of  } \frac{d}{d t} E^{N_0}(t) = \sum_{\begin{subarray}{l}
 |\alpha|= N_{0}\\
  \alpha_3=\alpha_4=0\\
 \end{subarray}} \sum_{\begin{subarray}{l}
 |\beta|, |\gamma|< |\alpha|\\
 \beta + \gamma=\alpha\\
 \end{subarray}} { \alpha \choose \beta}\sum_{(\mu, \nu)\in \mathcal{S}} \textup{Re} \Big( \int \overline{\Phi^\alpha} \widetilde{Q}_{\mu, \nu}(\Phi_{\mu}^{\beta}, \Phi_{\nu}^{\gamma}) \Big) 
\]
\begin{equation}\label{equation1249}
+  \sum_{ \nu, \kappa\in\{+,-\}}  \frac{1}{4} \textup{Re} \Big(\int \overline{\Phi^{\alpha}} Q^{4}_{\nu, \kappa}(\Phi_{\nu}^{\alpha},  \Phi_{\kappa})\Big).
\end{equation}
Now we  consider  the worst cubic terms   of the derivative of $E^{N_1}(t)$. Except the value of $\alpha$ is different and the presence of commutators terms, most cubic terms are same as terms in (\ref{equation1249}). Very similarly, we have
\[
\textup{worst cubic terms of  } \frac{d}{d t} E^{N_1}(t) = \sum_{\begin{subarray}{l}
 |\alpha|= N_{1}+1\\
  \alpha_3+\alpha_4=1\\
 \end{subarray}}\Big[
 \sum_{\begin{subarray}{l}
 |\gamma|, |\beta|< |\alpha|\\
  \beta +\gamma =\alpha\\
 \end{subarray}} { \alpha \choose \beta}\sum_{(\mu, \nu)\in \mathcal{S}} \textup{Re} \Big( \int \overline{\Phi^\alpha} \widetilde{Q}_{\mu, \nu}(\Phi_{\mu}^{\beta}, \Phi_{\nu}^{\gamma}) \Big)\]
\[
+  \sum_{ \nu, \kappa\in\{+,-\}}  \frac{1}{4} \textup{Re} \Big(\int 
\overline{\Phi^{\alpha}} Q^4_{\nu, \kappa}(\Phi_{\nu}^{\alpha},  \Phi_{\kappa})\Big)
 \Big]+\sum_{\begin{subarray}{l}
 |\alpha|= N_{1}\\
  \alpha_3=\alpha_4=0\\
 \end{subarray}}
 \sum_{\begin{subarray}{l}\\
  \beta +\gamma =\alpha\\
 \end{subarray}}{ \alpha \choose \beta}\sum_{(\mu, \nu)\in \mathcal{S}}  \textup{Re}\Big( \int  -\overline{\Gamma^\alpha S\Phi} \widetilde{Q}_{\mu, \nu}(\Phi_{\mu}^\beta, \Phi_{\nu}^\gamma) 
\]
\begin{equation}\label{equation1251}
-\overline{\Gamma^\alpha \Omega\Phi} \widetilde{Q}'_{\mu, \nu}(\Phi_{\mu}^\beta, \Phi_{\nu}^\gamma)\Big).
\end{equation}
To sum up, from (\ref{equation1100}), (\ref{equation1248}), (\ref{equation1249}) and (\ref{equation1251}), we have
\[
\textup{worst cubic terms of } \frac{d}{d t} E (t)=   \sum_{\begin{subarray}{l}
(|\alpha_1|+|\alpha_2|, |\alpha_3|+|\alpha_4|)\in \\
\{(N_0,0), (N_1,1),(0,0),(0,1)\}\\
 \end{subarray}} \sum_{\begin{subarray}{l}
 |\beta|, |\gamma|\leq \max\{|\alpha|-1, 1\}\\
 \beta + \gamma=\alpha\\
 \end{subarray}} { \alpha \choose \beta}\times\]
\[
\sum_{(\mu, \nu)\in \mathcal{S}} \textup{Re} \Big( \int \overline{\Phi^\alpha} \widetilde{Q}_{\mu, \nu}(\Phi_{\mu}^{\beta}, \Phi_{\nu}^{\gamma}) \Big)
+  \sum_{\begin{subarray}{l}
(|\alpha_1|+|\alpha_2|, |\alpha_3|+|\alpha_4|)\\
\in \{(N_0,0), (N_1,1)\}\\
 \end{subarray}} \sum_{ \nu, \kappa\in\{+,-\}}  \frac{1}{4} \textup{Re} \Big(\int \overline{\Phi^{\alpha}} Q^4_{\nu, \kappa}(\Phi_{\nu}^{\alpha},  \Phi_{\kappa})\Big)
\]
\begin{equation}\label{equation1300}
+\sum_{\begin{subarray}{l}
 |\alpha|\in\{0,N_{1}\}\\
  \alpha_3=\alpha_4=0\\
 \end{subarray}}
 \sum_{\begin{subarray}{l}\\
  \beta +\gamma =\alpha\\
 \end{subarray}}{ \alpha \choose \beta} \sum_{(\mu, \nu)\in \mathcal{S}}  \textup{Re}\Big( \int  -\overline{\Gamma^\alpha S\Phi} \widetilde{Q}_{\mu, \nu}(\Phi_{\mu}^\beta, \Phi_{\nu}^\gamma) -\overline{\Gamma^\alpha \Omega\Phi} \widetilde{Q}'_{\mu, \nu}(\Phi_{\mu}^\beta, \Phi_{\nu}^\gamma)\Big).
\end{equation}
\subsection{Construction of the modified energy}\label{constructionofmodified}
\subsubsection{First level correction.}
 From the construction of normal form transformation in subsection \ref{normalform}, to cancel out (\ref{equation1300}), it is sufficient to add the following cubic terms to the usual energy, 
\[
E_{\textit{FCorr}}(t)=   \sum_{\begin{subarray}{l}
(|\alpha_1|+|\alpha_2|, |\alpha_3|+|\alpha_4|)\in \\
\{(N_0,0), (N_1,1),(0,0),(0,1)\}\\
 \end{subarray}} \sum_{\begin{subarray}{l}
 |\beta|, |\gamma|\leq \max\{|\alpha|-1,1\}\\
 \beta + \gamma=\alpha\\
 \end{subarray}} { \alpha \choose \beta}\sum_{(\mu, \nu)\in \mathcal{S}} \textup{Re} \Big( \int \overline{\Phi^\alpha} A_{\mu, \nu}(\Phi_{\mu}^{\beta}, \Phi_{\nu}^{\gamma}) \Big)
\]
\[
+  \sum_{\begin{subarray}{l}
(|\alpha_1|+|\alpha_2|, |\alpha_3|+|\alpha_4|)\\
\in \{(N_0,0), (N_1,1)\}\\
 \end{subarray}} \sum_{ \nu, \kappa\in\{+,-\}}  \frac{1}{4} \textup{Re} \Big(\int \overline{\Phi^{\alpha}} A^4_{\nu, \kappa}(\Phi_{\nu}^{\alpha},  \Phi_{\kappa})\Big)
\]
\begin{equation}\label{equation1309}
+\sum_{\begin{subarray}{l}
 |\alpha|\in\{0,N_{1}\}\\
  \alpha_3=\alpha_4=0\\
 \end{subarray}}
 \sum_{\begin{subarray}{l}\\
  \beta +\gamma =\alpha\\
 \end{subarray}}{ \alpha \choose \beta} \sum_{(\mu, \nu)\in \mathcal{S}}  \textup{Re}\Big( \int  -\overline{\Gamma^\alpha S\Phi} A_{\mu, \nu}(\Phi_{\mu}^\beta, \Phi_{\nu}^\gamma) -\overline{\Gamma^\alpha \Omega\Phi} A'_{\mu, \nu}(\Phi_{\mu}^\beta, \Phi_{\nu}^\gamma)\Big),
\end{equation}
where the bilinear operator $A'_{\mu, \nu}(\cdot,\cdot)$ and the bilinear operator $A^{4}_{\nu, \kappa}(\cdot, \cdot)$ are defined by the symbol $a'_{\mu, \nu}(\cdot, \cdot)$ and $a^4_{\nu, \kappa}(\xi-\eta, \eta)$ as follows,
\[
a'_{\mu, \nu}(\xi-\eta, \eta)= \frac{i(\xi^{\bot}\cdot \nabla_{\xi} + \eta^{\bot}\cdot \nabla_{\eta}) \tilde{m}'_{\mu, \nu}(\xi-\eta ,\eta)}{|\xi| - a_{\mu} |\xi-\eta| - a_{\nu}|\eta|},
\]
\begin{equation}\label{equation1502}
a^4_{\nu, \kappa}(\xi-\eta, \eta)= \frac{iq^4_{\nu, \kappa}(\xi-\eta,\eta)}{|\xi|-a_{\nu}|\xi-\eta|-a_{\kappa}|\eta|}=  \frac{i(|\xi|+a_{\nu}|\xi-\eta|+a_{\kappa}|\eta|)q_2(\xi-\eta, \eta)}{2a_{\nu}a_{\kappa}|\xi-\eta||\eta|}. 
\end{equation}
 As mentioned in the commutation part of the rotational vector field $\Omega$, symbol $(\xi^{\bot}\cdot \nabla_{\xi} + \eta^{\bot}\cdot \nabla_{\eta}) \tilde{m}'_{\mu, \nu}(\xi-\eta ,\eta)$  has two degrees of angle inside, hence $a'_{\mu, \nu}(\xi-\eta, \eta)$ is a regular symbol. Similar to the proof of Lemma \ref{sizenormal}, by Lemma \ref{Snorm},  we can derive the following estimates, 
\begin{equation}\label{equation1363}
\| a'_{\mu, \nu}(\xi-\eta, \eta)\|_{\mathcal{S}^\infty_{k,k_1,k_2}}\lesssim 2^{\max\{k_1,k_2\}}, \quad  \|  a^4_{\nu, \kappa}(\xi-\eta, \eta) \|_{\mathcal{S}^\infty_{k,k_1,k_2}}\lesssim 2^{k_2},
\end{equation}
\begin{equation}\label{equation12006}
\| a_{-,-}^4(\xi-\eta, \eta)\|_{\mathcal{S}^\infty_{k,k_1,k_2}} + \| a_{-,+}^4(\xi-\eta, \eta)\|_{\mathcal{S}^\infty_{k,k_1,k_2}} \lesssim 2^{2k_2-k_1}, \quad \textup{if $k_2\leq k_1-5$}.
\end{equation}
\subsubsection{Second level correction.} Due to the quasilinear nature, it is possible to lose a derivative after taking derivative with respect to time $t$ for $E_{\textit{FCorr}}(t)$. We first identify those problematic terms inside $d E_{\textit{FCorr}}(t)/d t$ by checking which terms are possible to lose one derivative. 

Note that inputs inside $A_{\mu, \nu}(\cdot, \cdot)$  are not hit by entire derivatives and we lose at most another one derivative after taking derivative with respect to time. That is to say, only the following terms are possible to lose one derivative after taking derivative with respect to time, 
\[
\textup{Possible problematic terms}:= \sum_{\begin{subarray}{l}
(|\alpha_1|+|\alpha_2|, |\alpha_3|+|\alpha_4|)\\
\in \{(N_0,0), (N_1,1)\}\\
 \end{subarray}} \sum_{\begin{subarray}{l}
 \beta + \gamma=\alpha,  |\gamma|=1, \\
 |\gamma_3|=|\gamma_4|=0\\
 \end{subarray}} \sum_{(\mu, \nu)\in \mathcal{S}} { \alpha \choose \beta} \textup{Re} \Big( \int \overline{\Gamma^{\gamma}\Phi^\beta}\times  \]
\[
 \big(A_{\mu, \nu}(\Phi^{\beta}_{\mu}, \Phi^{\gamma}_{\nu})+ A_{\mu, \nu}(\Phi^{\gamma}_{\mu}, \Phi^{\beta}_{\nu})\big) \Big)
+ \sum_{\nu, \kappa\in\{+,-\}} \frac{1}{4} \textup{Re} \Big(\int \overline{\Phi^{\alpha}} \big(A^4_{\nu, \kappa}(\Phi^{\alpha}_{\nu}, \Phi_{\kappa})\Big).
\]
 Recall (\ref{sizeofnormal1}), we know that $A_{-,-}(\cdot, \cdot)$ does not lose derivative. That is to say, $A_{-,-}(\p_t \Phi^{\beta}, \Phi^\gamma)$  actually does not lose derivative. Recall (\ref{equation12001}), we can see that $A_{+,-}(\Phi^{\gamma}, \p_t \Phi^\beta)$ actually does not lose derivative, because losing derivative is only relevant when $\Phi^\beta$ has relatively higher frequency and for this case $A_{+,-}(\cdot, \cdot)$ doesn't lose derivative. From (\ref{equation12006}), we can actually gain one derivative from the symbol of  $A_{-,\kappa}^{4}(\p_t \Phi^\alpha_{-}, \Phi_{\kappa})$ when $\p_t \Phi^\alpha_{-}$ has larger frequency, hence it does not lose derivative.  To sum up, we can rule out $A_{-,-}({\Phi^\beta_{-}}, {\Phi^{\gamma}_{-}})$, $A_{-,-}({\Phi^\gamma_{-}}, {\Phi^{\beta}_{-}})$,  $A_{+,-}(\Phi^\gamma, {\Phi^{\beta}_{-}})$, and $A_{-,\kappa}^4(\Phi^\alpha_{-}, \Phi_{\kappa})$. The problematic terms are given as follows,
\[
\textup{Problematic terms}:= \sum_{\begin{subarray}{l}
(|\alpha_1|+|\alpha_2|, |\alpha_3|+|\alpha_4|)\\
\in \{(N_0,0), (N_1,1)\}\\
 \end{subarray}} \sum_{\begin{subarray}{l}
 \beta + \gamma=\alpha,  |\gamma|=1, \\
 |\gamma_3|=|\gamma_4|=0\\
 \end{subarray}} { \alpha \choose \beta} \textup{Re} \Big( \int \overline{\Gamma^{\gamma}\Phi^\beta} \big(A_{+, +}(\Phi^{\beta}, \Phi^{\gamma})+ A_{+, +}( \Phi^{\gamma}, \Phi^{\beta})\]
\begin{equation}\label{equation62000}
 +A_{+, -}(\Phi^{\beta}, \overline{\Phi^{\gamma}}\big) \Big)
+  \frac{1}{4} \textup{Re} \Big(\int \overline{\Phi^{\alpha}} \big(A^4_{+, +}(\Phi^{\alpha},  \Phi) +A^4_{+, -}(\Phi^{\alpha},  \overline{\Phi}) \big)\Big).
\end{equation}

To get around the difficulty of losing another derivative after taking derivative in time, we need utilize symmetries inside the system. 
 Utilizing cancellations inside (\ref{equation12310}), (\ref{equation12311}) and (\ref{equation12005}), we can  rule out some more terms further after replacing $\Phi$ by its real part and imaginary part.  As a result,   the problematic  cubic terms inside (\ref{equation62000}) are, intuitively speaking, either of type $\p_j(|\Phi^\beta|^2) \p_j \textup{Im}(\Phi)$ or of type $|\Phi^\alpha|^2 \textup{Im}(\Phi)$. For those types cubic terms, we only need to worry about the case when $\p_t$ hits $|\Phi^\beta|^2$ or $|\Phi^\alpha|^2$.  Recall the symmetries we identified in (\ref{equation9000}). We know that there are symmetries inside $\p_t(|\phi_0^\alpha|^2 +|\Phi^\alpha|)$ and this whole term doesn't lose derivative. This observation suggests us to add cubic terms of type $\p_j(|\phi^\beta_0|^2) \p_j \textup{Im}(\Phi)$ or of type $|\phi^\alpha_0|^2 \textup{Im}(\Phi)$ to (\ref{equation62000}). Essentially speaking, $\textup{Im}(\Phi)$ plays very little role. But, due to its presence, it will complicate computations a lot. 

 To sum up, with above intuition, we define the following second level correction terms, 
\[
E_{SCorr}(t): = \sum_{\begin{subarray}{l}
(|\alpha_1|+|\alpha_2|, |\alpha_3|+|\alpha_4|)\\
\in \{(N_0,0), (N_1,1)\}\\
 \end{subarray}} \sum_{\begin{subarray}{l}
 \beta + \gamma=\alpha,  |\gamma|=1, \\
 |\gamma_3|=|\gamma_4|=0\\
 \end{subarray}} { \alpha \choose \beta} \textup{Re} \Big( \int \overline{\Gamma^{\gamma}\phi_0^\beta} \big(A_{+, +}(\phi_0^{\beta}, \Phi^{\gamma})+ A_{+, +}( \Phi^{\gamma}, \phi_0^{\beta})\]
\begin{equation}\label{equation1340}
 +A_{+, -}(\phi_0^{\beta}, \overline{\Phi^{\gamma}}\big) \Big)
+  \frac{1}{4} \textup{Re} \Big(\int \overline{\phi_0^{\alpha}} \big(A^4_{+, +}(\phi_0^{\alpha},  \Phi) +A^4_{+, -}(\phi_0^{\alpha},  \overline{\Phi}) \big)\Big).
\end{equation}
Now, we can see the correspondence of $\Phi^\alpha$($\Phi^\beta$) in (\ref{equation62000})  and $\phi_0^\alpha$($\phi_0^\beta$) in (\ref{equation1340}). The modified energy that will be used to do energy estimate is defined as follows,
\begin{equation}\label{equation1620}
 E^{modi} (t) = E(t) + E_{FCorr}(t) + E_{SCorr}(t).
\end{equation}

\begin{lemma}\label{sizeofmodifiedenergy}
Under the bootstrap assumption \textup{(\ref{smallness})}, we have
\begin{equation}\label{equation9}
\sup_{t\in[0, T]} \big|E_{FCorr}(t) \big| + \big|E_{SCorr}(t) \big| \lesssim \epsilon_0^2.
\end{equation}
\end{lemma}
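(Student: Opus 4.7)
The plan is to prove (\ref{equation9}) by direct application of the bilinear estimate of Lemma \ref{boundness}, combined with the $\mathcal{S}^\infty$-norm bounds on the symbols of $A_{\mu,\nu}$, $A'_{\mu,\nu}$ and $A^4_{\nu,\kappa}$ supplied by Lemma \ref{sizenormal} and (\ref{equation1363}), together with the $L^2$ and $L^\infty$ bootstrap bounds in (\ref{smallness}). Cauchy-Schwarz reduces the generic cubic term in $E_{FCorr}(t)$, namely $\textup{Re}\int \overline{\Phi^\alpha}\, A(\Phi^\beta,\Phi^\gamma)\,dx$ with $\beta+\gamma=\alpha$ and $|\beta|,|\gamma|\leq (|\alpha|-1)_+$, to the product
\[
\|\Phi^\alpha\|_{L^2}\cdot \|A(\Phi^\beta,\Phi^\gamma)\|_{L^2}.
\]
The first factor is $\lesssim \epsilon_1(1+t)^{p_0}$ by the energy bootstrap. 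For the second factor I insert a Littlewood-Paley decomposition in the two input frequencies $k_1, k_2$, apply Lemma \ref{boundness}, and use the symbol bound $\lesssim 2^{\max\{k_1,k_2\}}$, which produces a loss of exactly one derivative placed on the higher-frequency input.

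The \emph{key strategic choice} is to always place the input carrying a vector field (when one is present in $\alpha$) in $L^2$, and the vector-field-free input in $L^\infty$. When $\alpha_3=\alpha_4=0$ (the cases $(N_0,0)$ and $(0,0)$), both $\Phi^\beta,\Phi^\gamma$ carry only spatial derivatives, and the smaller of $|\beta|,|\gamma|$ is at most $N_1<N_1+4$, so the $L^\infty$ factor is controlled by $\|\Phi\|_{Z'}\lesssim \epsilon_1(1+t)^{-1/2}$ while the $L^2$ factor, together with the lost derivative, sits safely inside $\|\Phi\|_{H^{N_0}}\lesssim \epsilon_1(1+t)^{p_0}$. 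When $\alpha_3+\alpha_4=1$ (the cases $(N_1,1)$ and $(0,1)$) the vector field is carried by exactly one of $\beta,\gamma$; placing that input in $L^2$, its combined derivative count is at most $N_1$ spatial derivatives plus one vector field even after the $2^{\max\{k_1,k_2\}}$ loss, and is absorbed by $\|S\Phi\|_{H^{N_1}}+\|\Omega\Phi\|_{H^{N_1}}\lesssim \epsilon_1(1+t)^{p_0}$. The remaining input, free of vector fields and of spatial order at most $N_1+4$, is controlled in $L^\infty$ via the $Z'$ norm. In every case the outcome is the uniform bound
\[
\Big|\textup{Re}\int \overline{\Phi^\alpha}\, A(\Phi^\beta,\Phi^\gamma)\,dx\Big| \lesssim \epsilon_1(1+t)^{p_0}\cdot \epsilon_1(1+t)^{p_0}\cdot \epsilon_1(1+t)^{-1/2}=\epsilon_1^3(1+t)^{2p_0-1/2},
\]
which, for $p_0<1/4$, is uniformly $\lesssim \epsilon_1^3\lesssim \epsilon_0^{5/2}\lesssim \epsilon_0^2$. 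The same scheme applied to the $A^4_{\nu,\kappa}$ and $A'_{\mu,\nu}$ terms in (\ref{equation1309}) gives the same bound, since these symbols satisfy identical $\mathcal{S}^\infty$ estimates.

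For $E_{SCorr}(t)$ the same scheme applies with $\phi_0$ replacing two of the factors of $\Phi$; the stronger bootstrap decay rates $\|\phi_0\|_{X_{N_0}}\lesssim \epsilon_1(1+t)^{p_0-1/2}$ and $\|\phi_0\|_{Z'_1}\lesssim \epsilon_1(1+t)^{-1}$ yield strictly better time decay in each term, and the uniform $\epsilon_0^2$ bound follows a fortiori.

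The main technical obstacle is the bookkeeping in the vector-field case of $E_{FCorr}$: one must check that, in every Littlewood-Paley regime (high/high, high/low and low/high of the two input frequencies), the extra derivative produced by $\|A\|_{\mathcal{S}^\infty_{k,k_1,k_2}}\lesssim 2^{\max\{k_1,k_2\}}$ can be absorbed into the $E^{N_1}$ or $E^{N_0}$ component of the energy without ever requiring an $L^\infty$ bound on $S\Phi$ or $\Omega\Phi$, which is not available in this framework. This is exactly where the favorable gap $N_0-N_1=N_1=150$ and the four extra derivatives built into the definition of the $Z'$ norm become essential, and where the constraint $|\beta|,|\gamma|\leq (|\alpha|-1)_+$ built into the definition of the correction in (\ref{equation1309})-(\ref{equation1340}) guarantees room to distribute the derivative loss.
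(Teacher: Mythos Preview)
Your approach is essentially the same as the paper's (Cauchy--Schwarz, Littlewood--Paley decomposition, the bilinear Lemma~\ref{boundness}, and the symbol bounds of Lemma~\ref{sizenormal} and (\ref{equation1363})), and your treatment of the $A_{\mu,\nu}$ and $A'_{\mu,\nu}$ terms, of the vector-field bookkeeping, and of $E_{SCorr}$ is correct.

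There is, however, a genuine gap in your handling of the $A^4_{\nu,\kappa}$ terms. You write that ``these symbols satisfy identical $\mathcal{S}^\infty$ estimates'' and apply the ``same scheme'', i.e.\ the bound $\lesssim 2^{\max\{k_1,k_2\}}$ with the lost derivative placed on the higher-frequency input. But in the $A^4$ terms of (\ref{equation1309}) the first input is $\Phi^{\alpha}$, carrying the \emph{full} $|\alpha|$ derivatives, not $\Phi^{\beta}$ with $|\beta|\le|\alpha|-1$. In the regime $k_1>k_2$ your scheme therefore demands control of $\|\Phi^{\alpha}\|_{H^1}$, which is $\|\Phi\|_{H^{N_0+1}}$ in the $(N_0,0)$ case and $\|S\Phi\|_{H^{N_1+1}}$ (or $\|\Omega\Phi\|_{H^{N_1+1}}$) in the $(N_1,1)$ case; neither is available. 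The fix, which is exactly what the paper does, is to use the \emph{sharper} bound $\|a^4_{\nu,\kappa}\|_{\mathcal{S}^\infty_{k,k_1,k_2}}\lesssim 2^{k_2}$ recorded in (\ref{equation1363}): the lost derivative then always falls on the second, undifferentiated input $\Phi_\kappa$, and one obtains $\|A^4_{\nu,\kappa}(\Phi^{\alpha}_\nu,\Phi_\kappa)\|_{L^2}\lesssim\|\Phi\|_{X_{N_0}}\|\Phi\|_{Z'}$ directly. With this correction your argument goes through and matches the paper's.
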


\begin{proof}
Let $h_1$ and $h_2$ be  two well defined functions. From   (\ref{sizeofnormal1}) and (\ref{equation1363}) and Lemma \ref{boundness}, we have the following estimate for  a bilinear operator $A(\cdot, \cdot)\in \{A_{\mu, \nu}(\cdot, \cdot), A'_{\mu, \nu}(\cdot, \cdot)\}$,
\[
\| A(h_1, h_2)\|_{H^{s}} \lesssim \Big( \sum_{k_1\leq k_2 -10}2^{2s k_2} \| P_{k_2}[A(P_{k_1} h_1, P_{k_2}h_2)]\|_{L^2}^2 \Big)^{1/2}+ \Big( \sum_{k_2\leq k_1 -10}2^{2s k_1}\times
\]
\[
 \| P_{k_1}[A(P_{k_1} h_1, P_{k_2}h_2)]\|_{L^2}^2\Big)^{1/2} + \sum_{|k_1-k_2|\leq 10} \sum_{k\leq k_1+20} 2^{s k} \| P_k[A(P_{k_1} h_1, P_{k_2}h_2)]\|_{L^2}
\]
\[
\lesssim \Big( \sum_{k_1\leq k_2 -10}2^{2(s+1) k_2}\| P_{k_2} h_2\|_{L^2}^2\| P_{k_1} h_1\|_{L^\infty}^2\Big)^{1/2} + \Big( \sum_{k_2\leq k_1 -10}2^{2(s+1) k_1}\| P_{k_2} h_2\|_{L^\infty}^2\| P_{k_1} h_1\|_{L^2}^2\Big)^{1/2}
\]
\begin{equation}\label{equation1410}
+ \| h_2\|_{H^{s+1}}\|h_1\|_{W^{1}}\lesssim \| \| h_2\|_{H^{s+1}}\|h_1\|_{W^{1}} + \| h_1\|_{H^{s+1}}\|h_2\|_{W^{1}}.
\end{equation}
For fixed $\alpha$ such that $ (|\alpha_1|+|\alpha_2|, |\alpha_3|+|\alpha_4|)=(N_0,0)$, then from estimate (\ref{equation1363}) and Lemma \ref{Snorm}, we have
\[
\|A^4_{\mu, \nu}(\Phi_{\nu}^{\alpha},  \Phi_{\kappa})\|_{L^2} \lesssim \Big( \sum_{k_1\leq k_2-10} 2^{2N_0k_1 + 2k_2}\| P_{k_2} \Phi\|_{L^2}^2\| P_{k_1} \Phi\|_{L^\infty}^2\Big)^{1/2}\]
\begin{equation}\label{equation1470}
+ \Big( \sum_{k_2\leq k_1 -10}2^{2(N_0+1) k_2}\| P_{k_2} \Phi\|_{L^2}^2\| P_{k_1} \Phi\|_{L^\infty}^2\Big)^{1/2} + \|\Phi\|_{H^{N_0}}\|\Phi\|_{W^{1+}}\lesssim \|\Phi\|_{H^{N_0}}\|\Phi\|_{Z'}.\end{equation}
Very similarly, we have the following estimate when  $ (|\alpha_1|+|\alpha_2|, |\alpha_3|+|\alpha_4|)=(N_1,1)$,
\[
\|A^4_{+, \kappa}(\phi_0^{\alpha},  \Phi_{\kappa})\|_{L^2} + \|A^4_{\mu, \nu}(\Phi_{\nu}^{\alpha},  \Phi_{\kappa})\|_{L^2}
\]
\begin{equation}\label{equation1472}
\lesssim \big(\| S(\Phi, \phi_0)\|_{H^{N_1}} + \| \Omega(\Phi, \phi_0)\|_{H^{N_1}}\big)\|\Phi\|_{W^{N_1+2}}\lesssim \|\Phi\|_{X_{N_0}}\|\Phi\|_{Z'}.
\end{equation}
To sum up, from estimates (\ref{equation1410}), (\ref{equation1470}) and (\ref{equation1472}), we can estimate terms  inside (\ref{equation1309}) and (\ref{equation1340}) one by one. As a result, we have
\begin{equation}
\big| E_{FCorr}(t)\big| + \big| E_{SCorr}(t)\big| \lesssim \| (\phi_0, \Phi)\|_{X_{N_0}}^2 \| \Phi\|_{Z'} \lesssim (1+t)^{-1/2+2p_0}\epsilon_1^3\lesssim \epsilon_0^2. 
\end{equation}
Therefore, our desired estimate (\ref{equation9}) holds.
\end{proof}
\begin{remark}
Since above dyadic decomposition and multilinear  type estimates are standard processes, we will not repeat the detail proofs of similar estimates again but give the stated estimates directly.  
\end{remark}

\subsection{Key cancellations}\label{keycancellations}
\subsubsection{key cancellations inside the symbols of $A_{+,+}(\cdot, \cdot)$ and $A_{+,-}(\cdot, \cdot)$}  From (\ref{twosame}), we have
\[
\tilde{m}'_{+,+}(\xi-\eta, \eta) + \tilde{m}'_{+,+}(\eta, \xi-\eta) =  \frac{-\xi\cdot(\xi-2\eta)}{4 |\xi|^2}(1-\cos(\angle(\xi-\eta, \eta)))\Big((\xi-\eta)\times\eta\Big) \]
\[- \big(\frac{\xi}{4|\xi|}-\frac{\xi-\eta}{4|\xi-\eta|}\big)\cdot \Big(\frac{\xi-\eta}{|\xi-\eta|}-\frac{\eta}{|\eta|}\Big) \Big( (\xi - \eta) \times \eta \Big) -\frac{\xi-\eta}{4|\xi-\eta|}\cdot \Big(\frac{\xi-\eta}{|\xi-\eta|}-\frac{\eta}{|\eta|}\Big) \Big( (\xi - \eta) \times \eta \Big)
\] 
\[
= \frac{\xi\cdot \eta - |\xi|^2}{2|\xi^2|}(1-\cos(\angle(\xi-\eta, \eta)))\big((\xi-\eta)\times \eta \big)- \big(\frac{\xi}{4|\xi|}-\frac{\xi-\eta}{4|\xi-\eta|}\big)\cdot \Big(\frac{\xi-\eta}{|\xi-\eta|}-\frac{\eta}{|\eta|}\Big) \Big( (\xi - \eta) \times \eta \Big).
\]
Very similarly, from (\ref{twoopposite}), we have the following decomposition for $\tilde{m}'_{+,-}(\xi-\eta, \eta)$,
\[
\tilde{m}'_{+,-}(\xi-\eta, \eta) = \frac{\xi\cdot \eta - |\xi|^2}{2|\xi^2|}(1+\cos(\angle(\xi-\eta, \eta)))\big((\xi-\eta)\times \eta \big)\]
\[- \big(\frac{\xi}{4|\xi|}-\frac{\xi-\eta}{4|\xi-\eta|}\big)\cdot \Big(\frac{\xi-\eta}{|\xi-\eta|}+\frac{\eta}{|\eta|}\Big) \Big( (\xi - \eta) \times \eta \Big).
\]
Therefore,
\[
 a_{+,+}(\xi-\eta, \eta) +a_{+,+}(\eta, \xi-\eta) 
=\frac{i(|\xi|+|\xi-\eta|+|\eta|)\times\big(\tilde{m}'_{+,+}(\xi-\eta, \eta) + \tilde{m}'_{+,+}(\eta, \xi-\eta)\big) }{|\xi-\eta||\eta|(\cos(\angle(\xi-\eta, \eta))-1)} 
\]
\[ 
= i\frac{|\xi|+|\xi-\eta|+|\eta|}{2|\xi-\eta||\eta|}\xi \times \eta + i \frac{-\xi\cdot \eta(|\xi|+|\xi-\eta|+|\eta|)}{2|\xi^2||\xi-\eta||\eta|}\big((\xi-\eta)\times \eta \big) 
\]
\begin{equation}\label{equation12300}
- i\big(\frac{\xi}{4|\xi|}-\frac{\xi-\eta}{4|\xi-\eta|}\big)\cdot \Big(\frac{\xi-\eta}{|\xi-\eta|}-\frac{\eta}{|\eta|}\Big) \Big( (\xi - \eta) \times \eta \Big) \frac{|\xi|+|\xi-\eta|+|\eta|}{|\xi-\eta||\eta|(\cos(\angle(\xi-\eta, \eta))-1)},
\end{equation}
\[
a_{+,-}(\xi-\eta,\eta)= \frac{ i \tilde{m}'_{+,-}(\xi-\eta, \eta)}{|\xi|-|\xi-\eta|+|\eta|}=\frac{i(|\xi|+|\xi-\eta|-|\eta|)\tilde{m}'_{+,-}(\xi-\eta, \eta)}{|\xi-\eta||\eta|(\cos(\angle(\xi-\eta, \eta))+1)}
\]
\[ 
= -i\frac{|\xi|+|\xi-\eta|-|\eta|}{2|\xi-\eta||\eta|} \xi\times \eta  + i\frac{\xi \cdot \eta(|\xi|+|\xi-\eta|-|\eta|)}{2|\xi|^2|\xi-\eta||\eta|} \xi \times \eta
\]
\begin{equation}\label{equation12301}
- i \big(\frac{\xi}{4|\xi|}-\frac{\xi-\eta}{4|\xi-\eta|}\big)\cdot \Big(\frac{\xi-\eta}{|\xi-\eta|}+\frac{\eta}{|\eta|}\Big) \Big( (\xi - \eta) \times \eta \Big) \frac{|\xi|+|\xi-\eta|-|\eta|}{|\xi-\eta||\eta|(\cos(\angle(\xi-\eta, \eta))+1)}.
\end{equation}
The main point of above decompositions is to separate out the leading term, which essentially causes the loss of a derivatives. Note that the difference between $\xi/|\xi|$ and $(\xi-\eta)/|\xi-\eta|$ is less than $|\eta|/|\xi|$ when $|\eta|\ll |\xi|$. Therefore, when $|\eta|\ll |\xi|$, the leading term of $a_{+,+}(\xi-\eta, \eta) + a_{+,+}(\eta, \xi-\eta)$ is $i\xi \times \eta/|\eta|$ and the leading term of $a_{+,-}(\xi-\eta, \eta)$ is $-i\xi \times \eta/|\eta|$. Now it is easy to see the leading terms are cancelled out. More precisely, from Lemma \ref{Snorm}, (\ref{equation12300}), and (\ref{equation12301}), the following estimates holds, 
\begin{equation}\label{equation12305}
\|a_{+,+}(\xi-\eta, \eta) +a_{+,+}(\eta, \xi-\eta) - i\xi \times \frac{\eta}{|\eta|}\|_{\mathcal{S}^\infty_{k,k_1,k_2}} \lesssim 2^{k_2}, \quad \textup{if $k_2\leq k_1-10$},
\end{equation}  
\begin{equation}\label{equation12306}
\| a_{+,-}(\xi-\eta, \eta) + i \xi \times \frac{\eta}{|\eta|}\|_{\mathcal{S}^\infty_{k,k_1,k_2}} \lesssim 2^{k_2}, \quad \textup{if $k_2\leq k_1-10$}.
\end{equation}
From (\ref{equation12306}), as a by product, the following estimate also holds
\begin{equation}\label{equation12307}
\| \overline{a_{+,-}(\xi, -\eta)} + i \xi \times \frac{\eta}{|\eta|}\|_{\mathcal{S}^\infty_{k,k_1,k_2}} \lesssim 2^{k_2}, \quad \textup{if $k_2\leq k_1-10$}.
\end{equation}
From (\ref{equation12305}), (\ref{equation12306}), and (\ref{equation12307}), the following estimates hold,
\begin{equation}\label{equation12310}
\|a_{+,+}(\xi-\eta, \eta) +a_{+,+}(\eta, \xi-\eta) + a_{+,-}(\xi-\eta, \eta)\|_{\mathcal{S}^\infty_{k,k_1,k_2}} \lesssim 2^{k_2}, \quad \textup{if $k_2\leq k_1-10$},
\end{equation}
\begin{equation}\label{equation12311}
\|a_{+,+}(\xi-\eta, \eta) +a_{+,+}(\eta, \xi-\eta) + \overline{a_{+,-}(\xi, -\eta)} \|_{\mathcal{S}^\infty_{k,k_1,k_2}} \lesssim 2^{k_2}, \quad \textup{if $k_2\leq k_1-10$}.
\end{equation}
After changing of coordinates in (\ref{equation12311}), we have the following byproduct,
\begin{equation}\label{equation64430}
\| a_{+,+}(\xi, -\eta) +a_{+,+}(-\eta,\xi)  +\overline{ a_{+,-}(\xi-\eta,\eta)}\|_{\mathcal{S}^{\infty}_{k,k_1,k_2}}\lesssim 2^{k_2},\quad \textup{if $k_2\leq k_1-10$}.
\end{equation}
\subsubsection{key cancellations inside the symbols of $A_{+,+}^4(\cdot, \cdot)$ and $A_{+,-}^4(\cdot, \cdot)$}  
Recall the detail formula of the symbol of $A_{\nu,\kappa}^4(\cdot, \cdot)$ in (\ref{equation1502}) and  the explicit formula of $q_2(\xi-\eta, \eta)$ in (\ref{equation1241}), we have the following identity
\[
a_{+,+}^4(\xi-\eta, \eta) + a_{+,-}^4(\xi-\eta, \eta)= \frac{i q_2(\xi-\eta, \eta)
}{|\xi-\eta|}= \frac{-2i \xi\cdot \eta}{|\xi|^2|\xi-\eta|} (\xi-\eta)\times \eta,
\]
\[
a_{+,+}^4(\xi-\eta, \eta) -\overline{a_{+,-}^4(\xi, -\eta)} = \frac{-i(|\xi|+|\xi-\eta|+|\eta|)}{|\xi-\eta||\eta|}\Big( \frac{\xi\cdot \eta}{ |\xi|^2} (\xi \times \eta) - \frac{(\xi-\eta)\cdot \eta}{|\xi-\eta|^2}(\xi\times \eta)
\Big)
\]
\[
+\Big(\frac{i(|\xi|+|\xi-\eta|-|\eta|)}{|\xi||\eta|}- \frac{i (|\xi|+|\xi-\eta|+|\eta|)}{|\xi-\eta||\eta|})\frac{(\xi-\eta)\cdot \eta}{ |\xi-\eta|^2}(\xi\times \eta),
\]
From above computations and Lemma \ref{Snorm}, the following estimate holds if $k_2\leq k_1-10$,
\begin{equation}\label{equation12005}
\| a_{+,+}^4(\xi-\eta, \eta) + a_{+,-}^4(\xi-\eta, \eta)\|_{\mathcal{S}^\infty_{k,k_1,k_2}} + \|a_{+,+}^4(\xi-\eta, \eta) -\overline{a_{+,-}^4(\xi, -\eta)}\|_{\mathcal{S}^\infty_{k,k_1,k_2}}\lesssim 2^{2k_2-k_1}. 
\end{equation}
\subsection{Energy estimate for the modified energy}
We first prove the following lemma, which consists of some bilinear estimates that will be used later.
\begin{lemma}\label{lemmal2}
For bilinear operator $Q\in\{Q_{0,\mu}, Q_{\mu,\nu}, 
\tilde{Q}_{0,0}, \tilde{Q}_{0,\mu}, \tilde{Q}_{\mu,\nu}\}$,  $\tilde{Q} 
\in \{ \tilde{Q}_{0}^{1}, \tilde{Q}_{0,\mu}^{1}, \tilde{Q}_{\mu, \nu}^{1} \} $ and any two smooth functions $h_{1}, h_{2} \in H^{1}\cap W^{1+} $, we have 
\begin{equation}\label{equation114}
\| Q(h_{1}, h_{2}) \|_{L^{2}} \lesssim \min\{ \| h_{1} \|_{H^{1}} \| h_{2} \|_{W^{1+}}, \|h_{2} \|_{H^{1}} \|h_{1} \|_{W^{1+}} \},
\end{equation}
\begin{equation}\label{equation117}
\|  \tilde{Q}(h_{1},h_{2}) \|_{L^{2}} \lesssim  \min\{ \| h_{1} \|_{L^{2}} \| h_{2}\|_{W^{1+}}, \| h_{2} \|_{L^{2}} \| h_{1} \|_{W^{1+}} \},\end{equation}
\[
\| \tilde{Q}(h_{1},h_{2}) \|_{L^{\infty}} \lesssim \min\big\{  \| h_{1} \|_{W^{1+}}  \| h_{2} \|_{W^{1+}}, \min\{
 \| h_{1}\|_{L^{\infty}} \| h_{2} \|_{W^{1+}} + \| h_{1} \|_{L^{\infty}}^{3/4}\times\]
\begin{equation} \label{equation168} 
\| h_{1} \|_{H^{5}}^{1/4} \| h_{2}\|_{L^{\infty}}^{3/4} \| h_{2}\|_{L^{2}}^{1/4},  \| h_{2}\|_{L^{\infty}} \| h_{1} \|_{W^{1+}} + \| h_{2} \|_{L^{\infty}}^{3/4} \| h_{2} \|_{H^{5}}^{1/4} \| h_{1}\|_{L^{\infty}}^{3/4} \| h_{1}\|_{L^{2}}^{1/4}  \} \big\}.
\end{equation}
\end{lemma}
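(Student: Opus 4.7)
The proof is a paraproduct-type analysis: for each bilinear operator, I would write
\[
Q(h_1,h_2) = \sum_{k,k_1,k_2} P_k Q(P_{k_1}h_1, P_{k_2}h_2)
\]
and split into the three standard regimes, high-low ($k_2\leq k_1-10$, forcing $k\sim k_1$), low-high ($k_1\leq k_2-10$, forcing $k\sim k_2$), and high-high ($|k_1-k_2|\leq 10$, so $k\leq \max(k_1,k_2)+O(1)$). In each regime I would estimate $\|m\|_{\mathcal{S}^\infty_{k,k_1,k_2}}$ via Lemma \ref{Snorm} following the recipe in subsection \ref{guide}, and then apply the $L^p\times L^q\to L^r$ bound of Lemma \ref{boundness} with the large-frequency input in $L^2$ (or $L^\infty$) and the small-frequency one in $L^\infty$.

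The heart of the symbol estimate is the 2D geometric identity
\[
|\xi|^2 = |\xi-\eta|^2 + |\eta|^2 + 2(\xi-\eta)\cdot \eta,
\]
which in the high-high regime ($|\xi-\eta|\sim|\eta|\gg|\xi|$) forces $\xi-\eta$ and $\eta$ to be nearly antiparallel and yields $|(\xi-\eta)\times\eta|\lesssim 2^{k+k_1}$, while in the high-low regime one simply uses $|(\xi-\eta)\times\eta|\lesssim 2^{k_1+k_2}$. Combined with the explicit formulas (\ref{eqn904})--(\ref{eqn3001}), this shows that for $Q\in\{Q_{0,\mu},Q_{\mu,\nu},\widetilde Q_{0,0},\widetilde Q_{0,\mu},\widetilde Q_{\mu,\nu}\}$ the symbols are controlled by $2^{k_{\min}+k_{\text{high}}}$ (two derivatives total, with one absorbed by the null structure whenever the output is the lowest frequency), whereas for $\widetilde Q\in\{\widetilde Q^1_{0,0},\widetilde Q^1_{0,\mu},\widetilde Q^1_{\mu,\nu}\}$ the extra $|\nabla|^{-1}$ reduces this to $2^{k_{\min}}$, so only one derivative is lost, and necessarily on the lowest-frequency factor.

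Given these bounds, summing the dyadic pieces gives the stated estimates in a standard fashion. For (\ref{equation114}), the high-frequency input goes into $H^1$ and the low-frequency input into $W^{1+}$ (the small positive exponent ensuring convergence of the low-frequency sum after summing over $k_{\min}$); the min comes from the symmetry between which factor is chosen to be low. For (\ref{equation117}) the single derivative lands naturally on the $W^{1+}$ factor, while the other is placed in $L^2$ with no loss. For the $L^\infty$ bound (\ref{equation168}), we estimate $\|P_k \widetilde Q(\cdot,\cdot)\|_{L^\infty}$ directly by Lemma \ref{boundness} with $r=\infty$; in the high-low/low-high regime we obtain the first bound immediately, while in the high-high regime ($k\leq\min(k_1,k_2)+O(1)$) we use Bernstein's inequality $\|P_k f\|_{L^\infty}\lesssim 2^k\|P_k f\|_{L^2}$, which together with the Cauchy--Schwarz-type interpolation $\|P_{k_1}h_i\|_{L^\infty}\lesssim \|h_i\|_{L^\infty}^{3/4}\|h_i\|_{H^5}^{1/4}\cdot 2^{-k_1/4}$ (at high $k_1$) yields the mixed bound with exponents $3/4$ and $1/4$.

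The main technical obstacle is extracting the sharp $\mathcal{S}^\infty$ bound in the high-high regime for the non-$\widetilde Q^1$ symbols, where only one degree of angle from the cross product is explicit and we rely on the antiparallel geometry to recover a second derivative-saving. Once the $\mathcal{S}^\infty$ estimates are in place, summing the geometric series is routine and is carried out along the lines indicated in the remark following Lemma \ref{sizeofmodifiedenergy}.
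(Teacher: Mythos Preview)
Your approach is the same as the paper's: decompose dyadically, use Lemma~\ref{Snorm} to get $\|q\|_{\mathcal S^\infty_{k,k_1,k_2}}\lesssim 2^{k_1+k_2}$ for the $Q$-symbols and $\lesssim 2^{\min(k_1,k_2)}$ for the $\tilde Q^1$-symbols, then feed this into Lemma~\ref{boundness} exactly as in the proof of (\ref{equation1410}). Your remark that in the high--high block the cross product obeys $(\xi-\eta)\times\eta=\xi\times\eta$, so that $|(\xi-\eta)\times\eta|\lesssim 2^{k+\max(k_1,k_2)}$, is correct and is really what makes the $k$-sum converge there; the paper leaves this implicit.

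One correction on (\ref{equation168}): the mixed $3/4,\,1/4$ term does \emph{not} come from the high--high block. High--high (and low--high) are already absorbed by $\|h_1\|_{L^\infty}\|h_2\|_{W^{1+}}$, since with symbol $\lesssim 2^{k_1}$ one has $\sum_{k_1}2^{k_1}\|P_{k_1}h_1\|_{L^\infty}\|P_{k_1}h_2\|_{L^\infty}\lesssim \|h_1\|_{L^\infty}\|h_2\|_{W^{1+}}$. The interpolation is needed in the \emph{high--low} block $k_2\le k_1-4$, where the symbol gives $2^{k_2}$ but summing $\|P_{k_1}h_1\|_{L^\infty}$ over $k_1$ would otherwise force a $W^{1+}$ norm on $h_1$. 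The paper writes
\[
\|P_{k_1}h_1\|_{L^\infty}\|P_{k_2}h_2\|_{L^\infty}\le \big(\|P_{k_1}h_1\|_{L^\infty}\|P_{k_2}h_2\|_{L^\infty}\big)^{3/4}\big(2^{k_1}\|P_{k_1}h_1\|_{L^2}\big)^{1/4}\big(2^{k_2}\|P_{k_2}h_2\|_{L^2}\big)^{1/4}
\]
via Bernstein on each dyadic piece, and then uses $\|P_{k_1}h_1\|_{L^2}\le 2^{-5k_{1,+}}\|h_1\|_{H^5}$ to recover summability in $k_1$. Your Bernstein-on-the-output idea for high--high would put an $L^2$-norm (hence an extra derivative) on one of the inputs and does not lead to the stated bound.
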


\begin{proof}
Similar to the proof of estimate (\ref{equation1410}), from (\ref{equation12000}) in Lemma \ref{sizeofnormal1} and Lemma \ref{boundness}, it is easy to see  the desired  estimate (\ref{equation114}) and (\ref{equation117}) holds.  Note that, from (\ref{equation12000}) in Lemma \ref{sizeofnormal1} and bilinear estimate in Lemma \ref{boundness}, the following estimate holds, 
\[
\| \tilde{Q}(h_{1}, h_{2})\|_{L^{\infty}}  = \sum_{|k_{1} - k_{2}| \leq 4}2^{k_{1}} 2^{-(1+)k_{1,+} }  \| P_{k_{1}} h_{1} \|_{L^{\infty}} \| P_{k_{2}} h_{2} \|_{L^{\infty}} + 
\]
\begin{equation}\label{equation380}
\sum_{ |k_{1} - k_{2} |\geq 4 } 2^{\min\{k_{1}, k_{2}\} } 2^{-(1{+})(k_{1,+} + k_{2,+})}  \| P_{k_{1}} h_{1} \|_{W^{1{+}}} 
 \| P_{k_{2}} h_{2} \|_{W^{1+}} 
\lesssim \| h_{1} \|_{W^{1+}}  \| h_{2} \|_{W^{1{+}}}.
\end{equation}
There is another way to estimate the $L^\infty$-norm of  $\tilde{Q}(h_{1}, h_{2})$, which is as follows,
\[
\| \tilde{Q}(h_{1}, h_{2})\|_{L^{\infty} } 
\lesssim \| h_{1}\|_{L^{\infty}} \| h_{2} \|_{W^{1+}} +\sum_{k_{2}\leq k_{1} - 4} 2^{k_{2} } 2^{ - 5 k_{1,+}/4}  (\| P_{k_{1}} h_{1} \|_{L^{\infty}} \| P_{k_{2}} h_{2} \|_{L^{\infty}})^{3/4}\]
\begin{equation}\label{equation381}
\times \| P_{k_{2}} h_{2} \|_{L^{2}}^{1/4} \| P_{k_{1}} h_{1}\|_{H^{5}}^{1/4} \lesssim  \| h_{1}\|_{L^{\infty}} \| h_{2} \|_{W^{1+}} + \| h_{1} \|_{L^{\infty}}^{3/4} \| h_{1} \|_{H^{5}}^{1/4} \| h_{2}\|_{L^{\infty}}^{3/4} \| h_{2}\|_{L^{2}}^{1/4}.
\end{equation}
Since the upper bound we used is symmetric, we can switch the role of $h_{1}$ and $h_{2}$ in estimates (\ref{equation380}) and (\ref{equation381}) to see estimate (\ref{equation168}) holds.
\end{proof}

From the construction of cubic correction terms inside the modified energy, we know that  cubic terms inside $d E^{modi}(t)/d t$ that  does not depend on $\phi_0 $ are cancelled out. It would be sufficient to close the energy estimate, if we can prove the following two Lemmas.
\begin{lemma}\label{remainderestimate1}
Under the bootstrap assumption \textup{(\ref{smallness})}, we have
\begin{equation}\label{equation7}
\sup_{t\in[0,T]} (1+t)^{-2 p_0+1} \big|\textup{cubic terms inside $ \frac{d }{d t} E(t)$ that depend on $\phi_0$}\big|
\lesssim \epsilon_0^2.
\end{equation}
\end{lemma}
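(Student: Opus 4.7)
Looking at the sources in (\ref{equation1230}), (\ref{eqn1009}) and (\ref{equation1003}) and removing the pure-$\Phi$ cubic part isolated in (\ref{equation1300}), what remains in $\textup{QAHigher}_1$ is a finite sum of cubic-in-amplitude integrals in each of which at least one factor is $\phi_0$. Typical shapes are
\[
\int \overline{\Phi^\alpha}\, \tilde{Q}_{0,\mu}^{(\alpha,0)}(\phi_0,\Phi_\mu),\quad \int \overline{\phi_0^\alpha}\, Q_{0,\mu}^{(\alpha,0)}(\phi_0,\Phi_\mu),\quad \int \overline{\phi_0^\alpha}\, Q_{\mu,\nu}^{(\alpha,0)}(\Phi_\mu,\Phi_\nu),
\]
together with their split-derivative $\mathbf{Err}_i^\alpha$ variants and the $S,\Omega$-commutator contributions produced via (\ref{equation1021}), (\ref{equation1023}). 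Since the original nonlinearity is purely quadratic, no genuine quartic piece arises, so the enumeration is complete.

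The plan is to dispose of every such term by Cauchy--Schwarz combined with the bilinear estimates of Lemma \ref{lemmal2} and the bootstrap rates (\ref{smallness}). For a generic triple $\int \overline{U^\alpha}\,Q(V^\beta,W^\gamma)$ with $\beta+\gamma=\alpha$, I would split in frequency: if $|\beta|\leq N_0/2$, place $V^\beta$ in $W^{1+}$ (absorbed by the $Z'$- or $Z'_1$-norm) and $W^\gamma$ in $H^1$ (absorbed by $X_{N_0}$); otherwise swap the roles. Lemma \ref{lemmal2} then gives
\[
\Bigl|\int \overline{U^\alpha}\, Q(V^\beta,W^\gamma)\Bigr|\lesssim \|U^\alpha\|_{L^2}\, \|V^\beta\|_{W^{1+}}\,\|W^\gamma\|_{H^1}.
\]
Since at least one of $U,V,W$ equals $\phi_0$, the worst configuration gives
\[
\|\Phi\|_{X_{N_0}}\,\|\phi_0\|_{X_{N_0}}\,\|\Phi\|_{Z'}\lesssim (1+t)^{p_0}\epsilon_1 \cdot (1+t)^{-1/2+p_0}\epsilon_1\cdot (1+t)^{-1/2}\epsilon_1 \lesssim (1+t)^{-1+2p_0}\epsilon_0^{5/2},
\]
after using $\epsilon_1=\epsilon_0^{5/6}$; all other configurations (two $\phi_0$ factors, or $\phi_0$ in the $L^\infty$ slot via $Z'_1$) are strictly better by the extra $(1+t)^{-1/2}$ decay they afford. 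The commutator pieces from $S,\Omega$ are no worse: the resulting $Q'$ satisfy the same $\mathcal{S}^\infty$-symbol bounds (as noted after (\ref{equation1023})) and involve at most $N_1+1$ derivatives, well below the $X_{N_0}$ threshold.

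The only technical point to be careful about is that none of the bilinear operators arising forces the loss of two derivatives simultaneously on the top-regularity factor. Inspection of (\ref{eqn904})--(\ref{eqn3001}) via Lemma \ref{Snorm} gives the uniform bound $\|q\|_{\mathcal{S}^\infty_{k,k_1,k_2}}\lesssim 2^{k_1+k_2}$, so the derivative cost is shared evenly between the two inputs and the $+1$ derivative landing on the low-frequency input is absorbed by the $W^{1+}$ norm. Summing over the finitely many $\alpha$'s, bilinear operators $Q$, and derivative distributions then yields the claimed $(1+t)^{1-2p_0}|\textup{QAHigher}_1|\lesssim \epsilon_0^2$. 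The main obstacle is really the bookkeeping step: one must systematically check that in every single configuration containing a $\phi_0$ factor, the $\phi_0$ can be routed into whichever slot ($L^2$ or $L^\infty$) is needed for the estimate to close.
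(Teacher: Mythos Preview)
Your treatment of the $\mathbf{Err}_i^\alpha$ pieces and the commutator terms is correct and matches the paper's approach: since $|\beta|,|\gamma|<|\alpha|$ there, the extra derivative coming from the $2^{k_1+k_2}$ symbol bound is harmless, and Lemma~\ref{lemmal2} closes the estimate exactly as you describe.

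The gap is in the ``top'' terms, i.e.\ the contribution of the second integral in (\ref{equation1230}), where one input of the bilinear form carries the full $\Gamma^\alpha$ with $|\alpha|=N_0$. Your typical shape $\int\overline{\Phi^\alpha}\,\tilde Q_{0,\mu}^{(\alpha,0)}(\phi_0,\Phi_\mu)$ contains $\tilde Q_{0,\mu}(\phi_0^\alpha,\Phi_\mu)$; with only the generic bound $\|q\|_{\mathcal S^\infty_{k,k_1,k_2}}\lesssim 2^{k_1+k_2}$ that you quote from (\ref{eqn904})--(\ref{eqn3001}), the estimate $\|\tilde Q_{0,\mu}(\phi_0^\alpha,\Phi_\mu)\|_{L^2}\lesssim\|\phi_0^\alpha\|_{H^1}\|\Phi\|_{W^{1+}}$ requires $\|\phi_0\|_{H^{N_0+1}}$, which is unavailable. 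The claim that ``the derivative cost is shared evenly'' is precisely what fails: one of the two shared derivatives still lands on the already-saturated input. The same obstruction hits $\int\overline{\phi_0^\alpha}\,Q_{\mu,\nu}^{(\alpha,0)}(\Phi_\mu,\Phi_\nu)$.

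The paper does not estimate these terms in the form you wrote. Instead it first passes through the symmetrization (\ref{equation1000})--(\ref{equation1008}), which rewrites the whole top block as the second integral of (\ref{eqn1009}) involving the operators $Q_2,Q_3$. The crucial point---stated in (\ref{equation1483}) and invoked in the paper's example~(ii)---is that the symmetrized symbols satisfy $\|q_2\|_{\mathcal S^\infty_{k,k_1,k_2}}+\|q_3\|_{\mathcal S^\infty_{k,k_1,k_2}}\lesssim 2^{2k_2}$ for $k_2\le k_1-10$, so \emph{both} derivatives land on the low-frequency (undifferentiated) input. That is what makes the quasilinear top term effectively semilinear and allows $\|Q_2(R_j\phi_0^\alpha,R_i\Phi)\|_{L^2}\lesssim\|\phi_0\|_{X_{N_0}}\|\Phi\|_{Z'}$ to close without loss. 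You need to either invoke (\ref{eqn1009}) and (\ref{equation1483}) for the top terms, or reproduce an equivalent symmetry cancellation; the uniform $2^{k_1+k_2}$ bound is not enough.
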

\begin{proof}
Recall the bootstrap assumption  (\ref{smallness}), we know that $\phi_0$ decays $1/t^{1/2}$ faster than $\Phi$.  With this $1/t^{1/2}$ faster decay rate, the decay rate for those cubic terms is sufficient. Although there are many cubic terms inside $d E(t)/d t$ that depends on $\phi_0$,  we can use (\ref{equation1363}), $L^2-L^\infty$ type estimate in Lemma \ref{boundness}, and the estimates in Lemma \ref{lemmal2} to estimate all of them. From (\ref{equation1100}), (\ref{equation1480}),(\ref{eqn1009}), and (\ref{equation1003}), we can identify all of them. As representative examples, we  estimate two of them in details as follows,
\begin{enumerate}
\item[(i)] 
 For any tuples $\beta, \gamma$, such that $|\beta|, |\gamma|< |\alpha|\in\{N_0, N_1+1\}$, we have the following estimate by estimate (\ref{equation114}) in Lemma \ref{lemmal2}, the following estimate holds, 
\[
\Big| \int \overline{\Phi^\alpha}(t)  \tilde{Q}_{0,\mu} (\phi_{0}^{\beta}(t), \Phi_{\mu}^{\gamma}(t)) d x \Big|\lesssim \| \Phi(t)\|_{X_{N_0}}\big[\|\phi_0(t)\|_{X_{N_0}} \|\Phi(t)\|_{Z'} + \| \Phi(t)\|_{X_{N_0}}\|\phi_0(t)\|_{Z'_1}\big]
\]
\[\lesssim (1+t)^{2p_0-1}\epsilon_1^3\lesssim (1+t)^{2p_0-1}\epsilon_0^2.
\] 
\item[(ii)]
From  (\ref{equation1483}) and $L^2-L^\infty$ type bilinear estimate in Lemma \ref{boundness}, the following estimate holds, 
\[
\Big| \int \overline{\Phi^\alpha}(t)  Q_2(R_2\phi_{0}^{\alpha}(t), R_1\Phi(t)) d x \Big|\lesssim  \| \Phi(t)\|_{X_{N_0}}\big[\|\phi_0(t)\|_{X_{N_0}} \|\Phi(t)\|_{Z'} + \| \Phi(t)\|_{X_{N_0}}\|\phi_0(t)\|_{Z'_1}\big]\]
\[\lesssim (1+t)^{2p_0-1}\epsilon_1^3\lesssim (1+t)^{2p_0-1}\epsilon_0^2.
\] 
\end{enumerate}
\end{proof}

\begin{lemma}\label{remainderestimate2}
Under the bootstrap assumption \textup{(\ref{smallness})}, we have
\begin{equation}\label{equation155}
\sup_{t\in[0,T]} (1+t)^{-2 p_0+1} \big| \textup{quartic terms of } \frac{d }{d t} (E_{FCorr}(t)+ E_{SCorr}(t))\big|
\lesssim \epsilon_0^2.
\end{equation}
Therefore, after combing above estimate with the results of Lemma \textup{\ref{sizeofmodifiedenergy}}  and Lemma \textup{\ref{remainderestimate1}}, we have 
\begin{equation}\label{equation8}
 \sup_{t\in[0,T]} (1+t)^{-p_0} \| (\phi_0,\Phi)\|_{X_{N_0}}\lesssim \epsilon_0.
\end{equation}
\end{lemma}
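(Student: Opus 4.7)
\emph{Plan.} I will differentiate each summand of $E_{FCorr}(t) + E_{SCorr}(t)$ in time by substituting the evolution equations (\ref{maineqn6}). By construction, the identity (\ref{condition1}) defining $a_{\mu,\nu}$, together with the analogous identities for $a'_{\mu,\nu}$ and $a^4_{\nu,\kappa}$ appearing in (\ref{equation1502}), guarantees that the cubic contributions stemming from the linear half-wave pieces $-ia_{\mu}|\nabla|\Phi_{\mu}$ of (\ref{maineqn6}) exactly cancel the problematic cubic terms $\frac{d}{dt}E(t)$ catalogued in (\ref{equation1300}). Consequently, every remaining contribution to $\textup{QAHigher}_2$ must replace at least one slot by a quadratic nonlinearity $\mathcal{N}_j^{\delta}$ or commutator error $\mathbf{Err}_j^{\delta}$, so that each term of $\textup{QAHigher}_2$ is at least quartic and carries the decay of two quadratic factors.

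\emph{Bounds.} A typical summand of $\textup{QAHigher}_2$ has the schematic form $\int \overline{F_1}\cdot A(F_2,F_3)\,dx$, where $A\in\{A_{\mu,\nu},A'_{\mu,\nu},A^4_{\nu,\kappa}\}$ has symbol of size $2^{\max\{k_1,k_2\}}$ (resp.\ $2^{k_2}$ for $a^{4}_{\nu,\kappa}$) by Lemma~\ref{sizenormal} and (\ref{equation1363}), and at least one factor $F_i$ is quadratic. I estimate each such integral by H\"older in $L^2\cdot L^2$: I place the highest-derivative factor in $L^2$ using the energy norm, and bound the complementary $L^2$-norm of $A(F_2,F_3)$ via Lemma~\ref{boundness} together with the bilinear estimates of Lemma~\ref{lemmal2} (which supply $\|\mathcal{N}_j^\delta\|_{L^2}\lesssim \|(\phi_0,\Phi)\|_{X_{N_0}}\|(\phi_0,\Phi)\|_{Z'}$ and similar control on commutator errors). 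Under (\ref{smallness}), each such product is dominated by $\epsilon_1^4(1+t)^{-1+2p_0}\lesssim \epsilon_0^2(1+t)^{-1+2p_0}$, which yields (\ref{equation155}).

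\emph{Main obstacle.} The delicate case is the derivative loss carried by $a_{\mu,\nu}$. When $\p_t$ falls on $\Phi^\beta$ inside $A_{\mu,\nu}(\Phi^\beta,\Phi^\gamma)\subset E_{FCorr}$ with $|\beta|$ at top regularity, substituting $\mathcal{N}_1^\beta$ for $\p_t\Phi^\beta$ would nominally demand $N_0+1$ derivatives on an input of $\Phi$, which is unavailable. These are precisely the ``Problematic terms'' (\ref{equation62000}), and $E_{SCorr}$ in (\ref{equation1340}) was inserted to cure this obstruction by a symmetrization analogous to (\ref{equation1000}): after adding $E_{SCorr}$, in the critical combinations the derivative-losing $\Phi$-slots get paired with $\phi_0$-slots, using the constraint $\phi_0=\mathcal{N}_2$ to absorb the extra derivative. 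Since $\phi_0$ itself is quadratic, with $\|\phi_0\|_{Z'_1}\lesssim \epsilon_1(1+t)^{-1}$ by (\ref{smallness}), the naive derivative loss is exchanged for an additional quadratic decay factor, and Lemma~\ref{lemmal2} closes the bound without loss.

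\emph{Closing the bootstrap.} Combining (\ref{equation155}) with Lemma~\ref{sizeofmodifiedenergy} and Lemma~\ref{remainderestimate1} gives $|E^{modi}(t)-E(t)|\lesssim \epsilon_0^2$ and $|\tfrac{d}{dt}E^{modi}(t)|\lesssim \epsilon_0^2(1+t)^{-1+2p_0}$. Integrating in time and using $E^{modi}(0)\lesssim \epsilon_0^2$ yields $E^{modi}(t)\lesssim \epsilon_0^2(1+t)^{2p_0}$. Since routine commutator arguments show $E(t)$ is comparable to $\|(\phi_0,\Phi)\|_{X_{N_0}}^2$, the desired improved energy estimate (\ref{equation8}) follows.
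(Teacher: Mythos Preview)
Your outline is correct for the bulk of the quartic terms, and the bootstrap closure at the end is fine. The genuine gap is in your ``Main obstacle'' paragraph: the mechanism you describe for how $E_{SCorr}$ cures the derivative loss is not the one that works, and as stated it does not close.

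You write that after adding $E_{SCorr}$ ``the derivative-losing $\Phi$-slots get paired with $\phi_0$-slots, using the constraint $\phi_0=\mathcal{N}_2$ to absorb the extra derivative,'' and that the quadratic decay of $\phi_0$ trades away the loss. But extra \emph{decay} of $\phi_0$ does nothing to recover a missing \emph{derivative}: the dangerous term after differentiation is of the type $\int\overline{\Gamma^{\gamma}\Phi^{\beta}}\,A_{+,+}(\mathcal{N}_1^{(\beta,0)},\Phi^{\gamma})$, which still demands $\|\Phi\|_{H^{N_0+1}}$ regardless of how fast any auxiliary factor decays, and the constraint $\phi_0=\mathcal{N}_2$ is never invoked here. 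What $E_{SCorr}$ actually accomplishes is different. Its $\phi_0$-slots are chosen so that, after taking $\partial_t$ and collecting the critical pieces together with the $\Phi$-pieces from (\ref{equation62000}), the sum can be rewritten back in the original variables $\psi,G_1,G_2$ via (\ref{changevariables}) (this also uses the vanishing identity $\mathrm{Re}\int f\,A_{\mu,\nu}(g,h)=0$ for real $f,g,h$, which forces a real/imaginary split of $\Phi$ first). In those variables one then exhibits explicit \emph{symbol-level} cancellations: a cross-Riesz cancellation $\xi/|\xi|\times(\xi-\eta)/|\xi-\eta|=-(\xi\times\eta)/(|\xi||\xi-\eta|)$, and, crucially, the identity that
\[
a_{+,+}(\xi-\eta,\eta)+a_{+,+}(\eta,\xi-\eta)+\overline{a_{+,-}(\xi,-\eta)}
\]
has $\mathcal{S}^\infty_{k,k_1,k_2}$-norm $\lesssim 2^{k_2}$ when $k_2\le k_1-10$, one full power of the high frequency better than the individual summands. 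Analogous cancellations hold for $a^4_{+,+}\pm a^4_{+,-}$ and for $a^4_{+,+}(\xi-\eta,\eta)-\overline{a^4_{+,+}(\xi,-\eta)}$. These gains, propagated through the quadrilinear symbols, are what bring the top-order count down by one and allow the $L^2\times L^2$ estimate to close. The paper also isolates a separate family ($A_{-,-}$ and one orientation of $A_{+,-}$, $A^4_{-,\pm}$) where the phase $|\xi|+|\xi-\eta|\pm|\eta|$ is bounded below, so the normal-form symbol already gains a derivative and no symmetrization is needed.

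In short: your sketch correctly identifies \emph{where} the difficulty lies but misidentifies \emph{how} it is resolved. The resolution is algebraic cancellation at the symbol level after passing to $(\psi,G_1,G_2)$, not the quadratic nature of $\phi_0$ via the constraint.
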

\subsection{Proof of Lemma \ref{remainderestimate2}}
Since the decay rate of quartic terms  is sufficient, we only have to avoid losing derivatives to close the argument. Recall the problematic terms we found in (\ref{equation62000}) and the second correction terms in (\ref{equation1340}), to close the argument, it is sufficient to prove the following terms do not lose a derivative,
\[
\sum_{\begin{subarray}{l}
(|\alpha_1|+|\alpha_2|, |\alpha_3|+|\alpha_4|)\\
\in \{(N_0,0), (N_1,1)\}\\
 \end{subarray}} \sum_{\begin{subarray}{l}
 \beta + \gamma=\alpha,  |\gamma|=1, \\
 |\gamma_3|=|\gamma_4|=0\\
 \end{subarray}} { \alpha \choose \beta} \big( J_{\beta, \gamma}^1(\Phi^\gamma) + J_{\beta, \gamma}^2(\Phi^\gamma) \big)  + \frac{1}{4}( J_{\alpha}^1 + J_{\alpha}^2),
\]
where
\[
J_{\beta, \gamma}^1(\Phi^\gamma):=\textup{Re} \Big( \int \overline{\Gamma^{\gamma}\mathcal{N}_1^{(\beta, 0)}} \big(A_{+, +}(\Phi^{\beta}, \Phi^{\gamma})+ A_{+, +}( \Phi^{\gamma}, \Phi^{\beta}) + A_{+, -}(\Phi^{\beta}, \overline{\Phi^{\gamma}})\big)
\]
\[
+\overline{\Gamma^{\gamma}\mathcal{N}_0^{(\beta, 0)}} \big(A_{+, +}(\phi_0^{\beta}, \Phi^{\gamma}) + A_{+, +}( \Phi^{\gamma}, \phi_0^{\beta})+ A_{+,-}(\phi_0^\beta, \overline{\Phi^\gamma})\Big), 
\]
\[
J_{\beta, \gamma}^2(\Phi^\gamma):=\textup{Re} \Big( \int \overline{\Gamma^{\gamma}\Phi^\beta} \big(A_{+, +}(\mathcal{N}_1^{(\beta, 0)}, \Phi^{\gamma})+ A_{+, +}( \Phi^{\gamma},\mathcal{N}_1^{(\beta, 0)})+ A_{+, -}(\mathcal{N}_1^{(\beta, 0)}, \overline{\Phi^{\gamma}})
\big)
\]
\[
+\overline{\Gamma^{\gamma}\phi_0^\beta} \big(A_{+, +}(\mathcal{N}_0^{(\beta, 0)}, \Phi^{\gamma})+ A_{+, +}( \Phi^{\gamma},\mathcal{N}_0^{(\beta, 0)})+  A_{+,-}(\mathcal{N}_0^{(\beta, 0)}, \overline{\Phi^\gamma})\big)\Big),\]
\[
J^1_\alpha:= \textup{Re} \Big(\int \overline{\mathcal{N}_1^{(\alpha,0)}} \big(A^4_{+, +}(\Phi^{\alpha},  \Phi) +A^4_{+, -}(\Phi^{\alpha},  \overline{\Phi}) \big) + \overline{\mathcal{N}_0^{(\alpha,0)}} \big(A^4_{+, +}(\phi_0^{\alpha},  \Phi) +A^4_{+, -}(\phi_0^{\alpha},  \overline{\Phi}) \big)\Big),
\]
\[
J^2_\alpha:= \textup{Re} \Big(\int \overline{\Phi^\alpha} \big(A^4_{+, +}(\mathcal{N}_1^{(\alpha,0)},  \Phi) +A^4_{+, -}(\mathcal{N}_1^{(\alpha,0)},  \overline{\Phi}) \big) + \overline{\phi_0^{\alpha}} \big(A^4_{+, +}(\mathcal{N}_0^{(\alpha,0)},  \Phi) +A^4_{+, -}(\mathcal{N}_0^{(\alpha,0)},  \overline{\Phi}) \big)\Big).
\]
We mention that $J^1_{\beta,\gamma}(\Phi^\gamma)$ and $J^2_{\beta, \gamma}(\Phi^\gamma)$  differs slightly, only the roles of $\Phi^\beta$ and $\mathcal{N}_1^{(\beta, 0)}$ are switched, and the same situation happens for $J_\alpha^1$ and $J_{\alpha}^2$. In the following two subsubsections, we mainly reveal cancellations inside $J^1_{\beta,\gamma}(\Phi^\gamma)$ and $J_\alpha^1$ in details. 

Although computations that readers will see are tedious, there are three main ideas behind: (i) The cancellation  (\ref{equation62002}) holds, as symbols of bilinear operators $A_{\mu, \nu}(\cdot, \cdot)$ and $A_{\mu, \nu}^{4}(\cdot, \cdot)$ are all imaginary and even in the sense of (\ref{equation12100}). With this fact, we can split variables into real part and imaginary part. As a result, we have the same inputs, which are all real, inside aforementioned bilinear operators. (ii)  With  cancellations in (\ref{equation12310}), (\ref{equation12311}), and (\ref{equation12005}), we know that those bilinear operators do not play many roles. Essentially speaking, the only difference between  quartic terms inside $J^1_{\beta, \gamma}(\Phi^\gamma)$ and $J_\alpha^1$ and cubic terms inside (\ref{equation9000}) is another real function, which does not affect too much.
(iii) There are cancellations inside the cubic terms of (\ref{equation9000}), which have been shown before in (\ref{equation61220}).

\subsubsection{Estimate of $J^1_{\beta,\gamma}(\Phi^\gamma)$ and $J^2_{\beta, \gamma}(\Phi^\gamma)$}
Since $\Gamma^\gamma\in \{\p_1, \p_2\}$, we write $\Gamma^\gamma$ as $\p_j$ for some $j\in\{1,2\}$.  We first consider $J^1_{\beta, \gamma}(\Phi^\gamma)$ and decompose it into two parts by splitting  the input $\p_j \Phi$ into imaginary part and real part. More precisely, we have
\begin{equation}\label{equation63330}
J^{i}_{\beta,\gamma}(\Phi^\gamma)=  J^{i}_{\beta,\gamma}(\p_j\textup{Re}(\Phi))+J^{i}_{\beta,\gamma}(i\p_j\textup{Im}(\Phi)), \quad i\in\{1,2\}.
\end{equation}
From the explicit formula of $a_{\mu, \nu}(\cdot, \cdot)$ in (\ref{symbolofnormal}), the following facts hold,
\begin{equation}\label{equation12100}
a_{\mu, \nu}(\xi-\eta, \eta)= a_{\mu, \nu}(\eta-\xi, -\eta),\quad  \textup{Re}(a_{\mu, \nu}(\xi-\eta, \eta))=0, \quad (\mu, \nu)\in \mathcal{S}.
\end{equation}
which further give us the following identity,
\begin{equation}\label{equation62002}
\textup{Re}\Big(\int f A_{\mu, \nu}(g, h)\Big) =0, \quad \textup{if\,} f, g, \textup{and\,} h\, \textup{\,are all real functions}.
\end{equation}

\noindent $\bullet$\quad   \textit{Estimate of $J^{1}_{\beta,\gamma}(\p_j\textup{Re}(\Phi))$}. \quad  From (\ref{equation62002}) and the fact that $\textup{Im}(\phi_0)=0$, the following identity holds after replacing $\Phi$ in terms of $\psi$, $G_1$, and $G_2$,
\[
 J^{1}_{\beta,\gamma}(\p_j\textup{Re}(\Phi))  = \textup{Re} \Big( \int \p_j\textup{Re}(\mathcal{N}_1^{(\beta, 0)}) \big(A_{+, +}(i \textup{Im}(\Phi^\beta), \p_j \textup{Re}(\Phi))+ A_{+, +}(\p_j \textup{Re}(\Phi),i \textup{Im}(\Phi^\beta))
\]
\[+A_{+, -}(i\textup{Im}(\Phi^\beta), \p_j \textup{Re}(\Phi))\Big)
+ \textup{Re} \Big( \int -i\p_j\textup{Im}(\mathcal{N}_1^{(\beta, 0)})\big(A_{+, +}( \textup{Re}(\Phi^\beta), \p_j \textup{Re}(\Phi)) \]
\[ + A_{+, +}(\p_j \textup{Re}(\Phi),\textup{Re}(\Phi^\beta))+ A_{+, -}(\textup{Re}(\Phi^\beta), \p_j \textup{Re}(\Phi))\big)
\Big).
 \]
After writing above terms in Fourier side, we have the following estimate, 
\[
| J^{1}_{\beta,\gamma}(\p_j\textup{Re}(\Phi)) | \lesssim \Big| \int \overline{\widehat{\textup{Re}\mathcal{N}_1^{(\beta,0)}}(\xi)} \widehat{\textup{Im}(\Phi^\beta)}(\xi-\eta)\widehat{\textup{Re}(\Phi)}(\eta) b_1(\xi,\eta)d \eta\Big|
\]
\[
+ \Big| \int \overline{\widehat{\textup{Im}\mathcal{N}_1^{(\beta,0)}}(\xi)} \widehat{\textup{Re}(\Phi^\beta)}(\xi-\eta)\widehat{\textup{Re}(\Phi)}(\eta) b_1(\xi,\eta)d \eta\Big|,
\]
where
\[
b_1(\xi, \eta)= i\xi_j \eta_j\big(a_{+,+}(\xi-\eta, \eta) + a_{+,+}(\eta, \xi-\eta) + a_{+,-}(\xi-\eta, \eta)).
\]
Recall (\ref{equation12310}). From (\ref{productsymbol}) in Lemma \ref{boundness}, the following estimate holds, 
\[
\| b_1(\xi, \eta)\|_{\mathcal{S}^\infty_{k,k_1,k_2}} \lesssim 2^{2k_2+k_1}, \quad \textup{if $k_2\leq k_1-10$},
\]
which further gives us the following estimate, 
\[
| J^{1}_{\beta,\gamma}(\p_j\textup{Re}(\Phi)) | \lesssim \|\mathcal{N}_1^{(\beta,0)} \|_{L^2} \| \Phi^\beta\|_{H^1} \| \Phi\|_{Z'}\]
\begin{equation}\label{equation12320}
\lesssim \|(\phi_0, \Phi)\|_{X_{N_0}}^2 \big(\|\phi_0\|_{Z'_1}+\|\Phi\|_{Z'}\big)^2 \lesssim (1+t)^{-2p_0+1}\epsilon_0^3.
\end{equation}

\noindent $\bullet$\quad   \textit{Estimate of $J^{1}_{\beta,\gamma}(i\p_j\textup{Im}(\Phi))$}. \quad
Using (\ref{equation62002}), we have the following identity for  $J^{1}_{\beta,\gamma}(i\p_j\textup{Im}(\Phi))$,
\[
 J^{1}_{\beta,\gamma}(i\p_j\textup{Im}(\Phi))= \textup{Re} \Big( \int \p_j\textup{Re}(\mathcal{N}_1^{(\beta, 0)}) \big( A_{+, +}( \textup{Re}(\Phi^\beta), i\p_j \textup{Im}(\Phi))+ A_{+, +}(  i\p_j \textup{Im}(\Phi),\textup{Re}(\Phi^\beta))
\]
 \[
 +A_{+, -}( \textup{Re}(\Phi^\beta), -i\p_j \textup{Im}(\Phi))\big) 
 + \p_j\textup{Re}(\mathcal{N}_0^{(\beta, 0)})\big( A_{+, +}(\textup{Re}(\phi_0^{\beta}),i\p_j \textup{Im}(\Phi)) + A_{+, +}(i\p_j \textup{Im}(\Phi),
 \]
 \[
\textup{Re}(\phi_0^{\beta}))+  A_{+, -}(\textup{Re}(\phi_0^{\beta}),-i\p_j \textup{Im}(\Phi)) \big) + \p_j\textup{Im}(\mathcal{N}_1^{(\beta, 0)}) \big(A_{+, +}( \textup{Im}(\Phi^\beta), i\p_j \textup{Im}(\Phi))
 \]
 \[
+ A_{+, +}(  i\p_j \textup{Im}(\Phi), \textup{Im}(\Phi^\beta)) + A_{+, -}( \textup{Im}(\Phi^\beta), -i\p_j \textup{Im}(\Phi))\big) + \p_j\textup{Im}(\mathcal{N}_0^{(\beta, 0)}) \big(A_{+, +}( \textup{Im}(\phi_0^\beta), 
 \]
 \begin{equation}\label{equation64320}
i\p_j \textup{Im}(\Phi))+  A_{+, +}(  i\p_j \textup{Im}(\Phi), \textup{Im}(\phi_0^\beta)) + A_{+, -}( \textup{Im}(\phi_0^\beta), -i\p_j \textup{Im}(\Phi))\big) \Big). 
 \end{equation}
 
  From (\ref{changevariables}) and (\ref{maineqn}),  we rewrite  (\ref{equation64320}) in terms of $\psi$, $G_1$, and $G_2$ and have the following identity, 
\begin{equation}\label{equation63332}
J^{1}_{\beta,\gamma}(i\p_j\textup{Im}(\Phi)) = J^{1,1}_{\beta,\gamma}(i\p_j\textup{Im}(\Phi))+ J^{1,2}_{\beta,\gamma}(i\p_j\textup{Im}(\Phi)), 
\end{equation}
where
\[J^{1,1}_{\beta,\gamma}(i\p_j\textup{Im}(\Phi)):= \textup{Re}\Big( \int  \p_j R_1 \widetilde{\mathcal{N}}_1^{(\beta, 0)} \big( A_{+,+}(R_2 G_2^\beta, i\p_j \textup{Im}(\Phi)) + A_{+,+}(i\p_j \textup{Im}(\Phi), R_2 G_2^\beta) \]
\[
+ A_{+,+}(R_2 G_2^\beta, -i\p_j \textup{Im}(\Phi))\big) - \p_j R_2 \widetilde{\mathcal{N}}_1^{(\beta, 0)} \big( A_{+,+}(R_1 G_2^\beta, i\p_j \textup{Im}(\Phi)) + A_{+,+}(i\p_j \textup{Im}(\Phi), R_1 G_2^\beta)
\]
\[+ A_{+,-}(R_1 G_2^\beta, -i\p_j\textup{Im}(\Phi))\big) +  \p_j R_2 \widetilde{\mathcal{N}}_2^{(\beta, 0)} \big( A_{+,+}(R_1 G_1^\beta, i\p_j \textup{Im}(\Phi)) + A_{+,+}( i\p_j \textup{Im}(\Phi),R_1 G_1^\beta)\]
\[
+ A_{+,-}(R_1 G_1^\beta, -i\p_j \textup{Im}(\Phi))  \big)  -  \p_j R_1 \widetilde{\mathcal{N}}_2^{(\beta, 0)} \big( A_{+,+}(R_2 G_1^\beta, i\p_j \textup{Im}(\Phi)) + A_{+,+}( i\p_j \textup{Im}(\Phi),R_2 G_1^\beta)\]
\[
+ A_{+,-}(R_2 G_1^\beta, -i\p_j \textup{Im}(\Phi))  \big),  
\]
\[
 J^{1,2}_{\beta,\gamma}(i\p_j\textup{Im}(\Phi)):= 
\textup{Re}\Big( \int \p_j \widetilde{\mathcal{N}}_0^{(\beta, 0)} \big(A_{+,+}(\psi^\beta, i \p_j \textup{Im}(\Phi)) + A_{+,+}(i \p_j \textup{Im}(\Phi),\psi^\beta)   \]
\[+A_{+,-}(\psi^\beta, -i \p_j \textup{Im}(\Phi)) \big) +  \sum_{i=1,2} \p_j R_i \widetilde{\mathcal{N}}_1^{(\beta, 0)} \big(A_{+,+}(R_i G_1^\beta, i\p_j\textup{Im}(\Phi)) + A_{+,+}(i\p_j\textup{Im}(\Phi),R_i G_1^\beta)\]
\[ 
+ A_{+,-}(R_i G_1^\beta, -i\p_j\textup{Im}(\Phi))\big)+ \p_j R_i \widetilde{\mathcal{N}}_2^{(\beta, 0)} \big(A_{+,+}(R_i G_2^\beta, i\p_j \textup{Im}(\Phi)) \]
\[
+A_{+,+}( i\p_j \textup{Im}(\Phi),R_i G_2^\beta)+ A_{+,-}(R_i G_2^\beta, -i\p_j \textup{Im}(\Phi)) \Big). \]
We write $J^{1,1}_{\beta, \gamma}(\p_j \textup{Im}(\Phi))$ on the Fourier side and have the following,
 \[
| J^{1,1}_{\beta,\gamma}(i\p_j\textup{Im}(\Phi))|\lesssim \Big|\int \int \overline{\widehat{\widetilde{\mathcal{N}}_1^{(\beta,0)}}(\xi)} \widehat{G_2^\beta}(\xi-\eta)\widehat{ \textup{Im}(\Phi)}(\eta) m_1(\xi-\eta, \eta) d \eta d \xi \Big|
 \]
 \[
 + \Big| \int \int \overline{\widehat{\widetilde{\mathcal{N}}_2^{(\beta,0)}}(\xi)} \widehat{G_1^\beta}(\xi-\eta)\widehat{ \textup{Im}(\Phi)}(\eta) m_1(\xi-\eta, \eta) d \eta d \xi \Big|, \]
where
\[
m_1(\xi-\eta, \eta)= i\xi_j \eta_j \frac{\xi}{|\xi|}\times\big(\frac{\xi-\eta}{|\xi-\eta|}\big) \big(a_{+,+}(\xi-\eta, \eta) + a_{+,+}(\eta, \xi-\eta)- a_{+,-}(\xi-\eta, \eta)\big)\]
\[
=  i\xi_j \eta_j \frac{-\xi\times \eta}{|\xi||\xi-\eta|}\big( a_{+,+}(\xi-\eta, \eta)+ a_{+,+}(\eta, \xi-\eta)- a_{+,-}(\xi-\eta, \eta)\big) .\]
From above computation, we can see the cancellation comes from the Riesz operators. From Lemma \ref{Snorm}, the following estimate holds,
\[
\| m_1(\xi-\eta, \eta)\|_{\mathcal{S}^{\infty}_{k,k_1,k_2}}\lesssim 2^{2k_2+k_1}, \quad \textup{if $k_2\leq k_1-10$}.
\]
From above estimate and $L^2-L^\infty$ type bilinear estimate  in Lemma \ref{boundness}, the following estimate holds, \[
| J^{1,1}_{\beta,\gamma}(i\p_j\textup{Im}(\Phi))|\lesssim \|(\widetilde{\mathcal{N}}_1^{(\beta, 0)}, \widetilde{\mathcal{N}}_2^{(\beta, 0)})\|_{L^2}\big(\|\Phi \|_{Z'}\| (G_1, G_2)\|_{X_{N_0}} + \|\Phi\|_{X_{N_0}}\|(G_1,G_2)\|_{Z_1'}\big) 
\]
\begin{equation}\label{equation63317}
\lesssim \|(\phi_0, \Phi)\|_{X_{N_0}}^2 \big(\|\phi_0\|_{Z'_1}+\|\Phi\|_{Z'}\big)^2 \lesssim (1+t)^{-2p_0+1}\epsilon_0^3.
\end{equation}
We proceed to consider $J^{1,2}_{\beta, \gamma}(\p_j \textup{Im}(\Phi))$.  Recall  (\ref{equation62220}), similar to computations in (\ref{equation61220}),  we  write $J^{1,2}_{\beta, \gamma}(\p_j \textup{Im}(\Phi))$  on the Fourier side. As a result, the following estimate holds,
\[
| J^{1,2}_{\beta,\gamma}(i\p_j\textup{Im}(\Phi))|\lesssim \Big| \int \int \int  \overline{\widehat{\psi^{\beta}}(\xi-\sigma) \psi(\sigma)} \widehat{\psi^{\beta}}(\xi-\eta)\widehat{\textup{Im}(\Phi)}(\eta) m_2(\xi, \eta, \sigma) d \sigma d \eta d \xi \Big|
\]
\[
+ \sum_{i=1,2} \Big| \int \int \int \overline{\widehat{G_i^\beta}(\xi-\sigma)\widehat{\psi}(\sigma)} \widehat{G_i^\beta}(\xi-\eta)\widehat{\textup{Im}(\Phi)}(\eta) m_3(\xi, \eta, \sigma) d \sigma d \eta d \xi \Big|
\]
\begin{equation}\label{equation63221}
+ \Big| \int \int \int \overline{\widehat{G_i^\beta}(\xi-\sigma)\widehat{G_i}(\sigma)} \widehat{\psi^\beta}(\xi-\eta)\widehat{\textup{Im}(\Phi)}(\eta) m_4(\xi, \eta, \sigma) d \sigma d \eta d \xi \Big|,
\end{equation}
where
\[
m_2(\xi,\eta, \sigma) = i \xi_j\eta_j \big(q(\xi-\sigma, \sigma)  + q(\sigma, \xi-\sigma)\big)\big(a_{+,+}(\xi-\eta, \eta)+ a_{+,+}(\eta, \xi-\eta) \big)
\]
\begin{equation}\label{equation12201}
+i (\xi_j-\eta_j-\sigma_j)(-\eta_j) \big( q(\xi-\eta, -\sigma) + q(-\sigma, \xi-\eta)\big) \overline{a_{+,-}(\xi-\sigma, -\eta)},
\end{equation}
\[
m_3(\xi, \eta,\sigma)= i \xi_j \eta_j {(\xi-\sigma)\times \sigma}  \frac{\xi\cdot(\xi-\eta)}{|\xi||\xi-\eta|}\big(a_{+,+}(\xi-\eta, \eta)+ a_{+,+}(\eta, \xi-\eta)\big)
\]
\begin{equation}\label{equation63310}
+i(\xi_j-\eta_j-\sigma_j)(-\eta_j) {(\xi-\eta)\times (-\sigma)} \frac{(\xi-\eta-\sigma)\cdot(\xi-\sigma)}{|\xi-\eta-\sigma||\xi-\sigma|}  \overline{a_{+,-}(\xi-\sigma, -\eta)},
\end{equation}
\[
m_4(\xi, \eta,\sigma)= i\xi_j \eta_j(-q(\xi-\sigma, \sigma)-q(\sigma, \xi-\sigma))(a_{+,+}(\xi-\eta, \eta)+ a_{+,+}(\eta, \xi-\eta)-a_{+,-}(\xi-\eta, \eta))
\]
\[
+ i(\xi_j-\eta_j-\sigma_j)(-\eta_j)\big(- (-\sigma)\times(\xi-\eta)\big)\frac{(\xi-\eta-\sigma)\cdot(\xi-\eta)}{|\xi-\eta-\sigma||\xi-\eta|}\big(\overline{a_{+,-}(\xi-\sigma, -\eta)}\]
\begin{equation}\label{equation63315}
-\overline{a_{+,+}(\xi-\sigma, -\eta)}- \overline{a_{+,+}(-\eta, \xi-\sigma)}\big)
\end{equation}
 Since losing a derivative for (\ref{equation63221})   is only relevant when $|\eta|, |\sigma|\ll |\xi|$,  we assume $|\eta|, |\sigma|\ll |\xi|$ in the rest of this subsubsection. 
Now, we are ready to see cancellation inside  $m_i(\xi, \eta, \sigma)$,$i\in\{2,3,4\}$. We decompose them as follows, \[
 m_2(\xi, \eta, \sigma)= i\xi_j\eta_j\underbrace{(q(\xi-\sigma, \sigma)+q(\sigma,\xi-\sigma)+q(\xi, -\sigma)+q(-\sigma, \xi))}_{\textup{cancellation from (\ref{equation1008}) and (\ref{equation1483})} }\big(a_{+,+}(\xi-\eta, \eta)\]
 \[+ a_{+,+}(\eta, \xi-\eta) \big)-i\xi_j\eta_j\big(q(\xi, -\sigma)+q(-\sigma, \xi)\big)\underbrace{(a_{+,+}(\xi-\eta, \eta)+a_{+,-}(\eta, \xi-\eta)+\overline{a_{+,-}(\xi,-\eta)})}_{\text{cancellation from (\ref{equation12311})  }}
 \]
 \[
 +\underbrace{i(\eta_j+\sigma_j)\eta_j\big( q(\xi-\eta, -\sigma) + q(-\sigma, \xi-\eta)\big) \overline{a_{+,-}(\xi-\sigma, -\eta)}}_{\text{rough estimate will do}} -i\xi_j\eta_j \overline{a_{+,-}(\xi, -\eta)}\times
\]
\[
\underbrace{(q(\xi-\eta, -\sigma) -q(\xi, -\sigma) + q(-\sigma, \xi-\eta)-q(-\sigma, \xi) )}_{\text{cancellation from the smallness of the difference between $\xi-\eta$ and $\xi$ }} +  i\underbrace{(\overline{a_{+,-}(\xi,-\eta)} -\overline{a_{+,-}(\xi-\sigma, -\eta)})}_{\text{ from the smallness of the difference between $\xi-\sigma$ and $\xi$ }}
\]
\begin{equation}\label{}
\cdot \xi_j\eta_j \big(q(\xi-\eta, -\sigma) + q(-\sigma, \xi-\eta)\big) .
\end{equation}
From above decomposition,  (\ref{equation1483}) and (\ref{equation12311}), and Lemma \ref{Snorm}, the following estimate holds,
\begin{equation}\label{equation63311}
\|m_2(\xi, \eta, \sigma)\|_{\mathcal{S}^\infty_{k,k_1,k_2,k_3}}\lesssim 2^{3\max\{k_2,k_3\}+2k_1}, \quad \textup{if $\max\{k_2, k_3\}\leq k_1-10$}.
\end{equation}

Now, we proceed to reveal cancellations inside $m_3(\xi,\eta, \sigma)$ and $m_{4}(\xi, \eta, \sigma)$. For simplicity, we only highlight those symbols  that do not satisfy (\ref{equation63311}) type estimate. As a result, we have
\[
m_3(\xi, \eta, \sigma)= i\big( \underbrace{a_{+,+}(\xi-\eta, \eta)+ a_{+,+}(\eta, \xi-\eta)+ \overline{a_{+,-}(\xi, -\eta)}}_{\text{cancellation from  (\ref{equation12311}) } }- \overline{a_{+,-}(\xi, -\eta)}+ \overline{a_{+,-}(\xi-\sigma, -\eta)}\big) 
\]
\[
 \cdot \xi_j \eta_j (\xi\times\sigma) + \textup{other terms inside (\ref{equation63310}) that satisfy (\ref{equation63311}) type estimate},
\]
\[
m_4(\xi, \eta, \sigma)= i\xi_j \eta_j \underbrace{\big[ \big(\sigma\times \xi \big)-q(\xi-\sigma, \sigma)-q(\sigma, \xi-\sigma)\big]}_{\text{cancellation from (\ref{equation1241}) and (\ref{equation1483})}}\big(a_{+,+}(\xi-\eta, \eta)+a_{+,+}(\eta, \xi-\eta)
\]
\[
-a_{+,-}(\xi-\eta, -\eta)\big)- i \xi_j\eta_j \big(\sigma \times \xi \big)\Big(\big(\underbrace{a_{+,+}(\xi-\eta, \eta)+ a_{+,+}(\eta, \xi-\eta)+\overline{a_{+,-}(\xi, -\eta)}\big)}_{\text{cancellation from (\ref{equation12311})}} \]
\[
-\underbrace{\big(\overline{a_{+,+}(\xi, -\eta)} + \overline{a_{+,+}(\xi, -\eta)}+ a_{+,-}(\xi-\eta, \eta) \big)}_{\text{cancellation from (\ref{equation64430}) }} +\underbrace{\big(-\overline{a_{+,-}(\xi, -\eta)}+\overline{a_{+,-}(\xi-\sigma, -\eta)}}_{\text{cancellation from the smallness of between $\xi$ and $\xi-\sigma$}}\]
\[
\underbrace{-\overline{a_{+,+}(\xi-\sigma, -\eta) } + \overline{a_{+,+}(\xi, -\eta)} -\overline{a_{+,+}(-\eta, \xi-\sigma)} +\overline{a_{+,+}(-\eta, \xi)}\big)}_{\text{cancellation from the smallness of $\xi$ and $\xi-\sigma$}}\Big)
\]
\[
+ \textup{other terms inside (\ref{equation63315}) that satisfy (\ref{equation63311}) type estimate}.
\]
From above decomposition,  (\ref{equation1483}) and (\ref{equation12311}), and Lemma \ref{Snorm}, the following estimate holds if $\max\{k_2, k_3\}\leq k_1-10$,
\begin{equation}\label{equation63316}
\|m_3(\xi, \eta, \sigma)\|_{\mathcal{S}^\infty_{k,k_1,k_2,k_3}} + \|m_4(\xi, \eta, \sigma)\|_{\mathcal{S}^\infty_{k,k_1,k_2,k_3}}\lesssim 2^{3\max\{k_2,k_3\}+2k_1}.
\end{equation}
From  (\ref{equation63311}) and (\ref{equation63316}) and $L^2-L^\infty-L^\infty$ type trilinear estimate  in Lemma \ref{boundness}, we have
\begin{equation}\label{equation63333}
| J^{1,2}_{\beta,\gamma}(i\p_j\textup{Im}(\Phi))|\lesssim \big(\|\Phi \|_{Z'}\| (G_1, G_2)\|_{X_{N_0}} + \|\Phi\|_{X_{N_0}}\|(G_1,G_2)\|_{Z_1'}\big)^2\lesssim  (1+t)^{-1+2p_0} \epsilon_0^3. 
\end{equation}
To sum up, from (\ref{equation63330}),  (\ref{equation12320}),  (\ref{equation63332}),  (\ref{equation63317}), and  (\ref{equation63333}) , the following estimate holds,
\[
|J_{\beta, \gamma}^1(\Phi^\gamma)| \lesssim (1+t)^{-1+2p_0}\epsilon_0^3.
\]
Note the following two facts, one can estimate $J_{\beta, \gamma}^2(\Phi^\gamma) $ very similarly with minor modifications.
\begin{enumerate}
\item[(i)] We can represent $J^2_{\beta, \gamma}(\Phi^\gamma)$ in the  following way, 
\[
\textup{Re}\Big( \int \overline{\Gamma^{\gamma}f} \big(T(g, \Gamma^{\gamma} h ) \Big)= \textup{Re}\Big( \int \overline{\Gamma^{\gamma} g} \tilde{T}(f, \overline{\Gamma^\gamma h})  \Big) + \textup{a term that doesn't lose derivative},
\]
where the first term is similar to $J^1_{\beta, \gamma}(\Phi^\gamma)$ and $\tilde{T}(\cdot, \cdot)$ is defined by the following symbol,
\[
\tilde{t}(\xi-\eta, \eta):= \overline{t(\xi, -\eta)}, \quad  \textup{$t(\xi-\eta, \eta)$ is the symbol of bilinear operator $T\in \{A_{+,+}, A_{+,-} \}$}.
\]
\item[(ii)] From (\ref{equation12310}) and (\ref{equation12311}), same types of cancellations also happen for $\tilde{t}(\xi-\eta, \eta)$ when $|\eta|\ll |\xi|$.
\end{enumerate}

\subsubsection{Estimating  $J^1_{\alpha}$ and $J^2_{\alpha}$} We first  estimate  $J_\alpha^1$ and decompose it into two parts as follows, 
\[
J_{\alpha}^1 = J_\alpha^1(\textup{Re}(\Phi)) + J_\alpha^1(i\textup{Im}(\Phi)),\quad J^1_\alpha(\textup{Re}(\Phi)):= \textup{Re} \Big(\int \overline{\mathcal{N}_1^{(\alpha,0)}} \big(A^4_{+, +}(\Phi^{\alpha},  \textup{Re}(\Phi)) 
\]
\[
+A^4_{+, -}(\Phi^{\alpha},  \textup{Re}(\Phi)) \big) + \overline{\mathcal{N}_0^{(\alpha,0)}} \big(A^4_{+, +}(\phi_0^{\alpha},  \textup{Re}(\Phi)) +A^4_{+, -}(\phi_0^{\alpha},  \textup{Re}(\Phi)) \big)\Big),
\]
\[
J^1_\alpha(i\textup{Im}(\Phi)):= \textup{Re} \Big(\int \overline{\mathcal{N}_1^{(\alpha,0)}} \big(A^4_{+, +}(\Phi^{\alpha},  i\textup{Im}(\Phi)) +A^4_{+, -}(\Phi^{\alpha},  -i\textup{Im}(\Phi)) \big) \]
\[+ \overline{\mathcal{N}_0^{(\alpha,0)}} \big(A^4_{+, +}(\phi_0^{\alpha},  i\textup{Im}(\Phi)) +A^4_{+, -}(\phi_0^{\alpha},  -i\textup{Im}(\Phi)) \big)\Big).
\]
From (\ref{equation12005}) and $L^2-L^\infty$ type bilinear estimate in Lemma \ref{boundness}, the following estimate holds, 
\[
|J_{\alpha}^1(\textup{Re}(\Phi))|\lesssim \|\mathcal{N}_1^{(\alpha,0)}\|_{H^{-1}} \|\Phi\|_{X_{N_0}}\|\Phi\|_{Z'}
\]
\begin{equation}\label{equation63421}
\lesssim \|(\phi_0, \Phi)\|_{X_{N_0}}^2 (\|\phi_0\|_{Z'_1} +\|\Phi\|_{Z'})^2\lesssim (1+|t|)^{-1+2p_0}\epsilon_0^3.
\end{equation}
Note that (\ref{equation63330}) also holds for $a_{\mu, \nu}^4(\cdot, \cdot)$. Hence  identity (\ref{equation62002}) also holds for bilinear  operator $A_{\mu, \nu}^4(\cdot, \cdot)$. As a result, we have
\[
J^1_\alpha(i\textup{Im}(\Phi))= J^{1,1}_\alpha(i\textup{Im}(\Phi))+ J^{1,2}_\alpha(i\textup{Im}(\Phi)),\]
\[
J^{1,1}_\alpha(i\textup{Im}(\Phi)):= \textup{Re}\Big( \int  R_{2} \widetilde{\mathcal{N}}_2^{(\alpha, 0)}\big( A_{+,+}^4( R_1 G_1^\alpha, i \textup{Im}(\Phi) + A_{+,-}^4(R_1 G_1^\alpha, -i \textup{Im}(\Phi))\big)
\]
\[
- R_{1} \widetilde{\mathcal{N}}_2^{(\alpha, 0)}\big( A_{+,+}^4( R_2 G_1^\alpha, i \textup{Im}(\Phi) + A_{+,-}^4(R_2 G_1^\alpha, -i \textup{Im}(\Phi))\big) +  R_{1} \widetilde{\mathcal{N}}_1^{(\alpha, 0)}\big( A_{+,+}^4( R_2 G_2^\alpha, i \textup{Im}(\Phi) \]
\[+ A_{+,-}^4(R_2 G_2^\alpha, -i \textup{Im}(\Phi))\big)- R_{2} \widetilde{\mathcal{N}}_1^{(\alpha, 0)}\big( A_{+,+}^4( R_1 G_2^\alpha, i \textup{Im}(\Phi) + A_{+,-}^4(R_1 G_2^\alpha, -i \textup{Im}(\Phi))\big)\Big),
\]
\[ J^{1,2}_\alpha(i\textup{Im}(\Phi)):= \textup{Re}\Big( \int \widetilde{\mathcal{N}}_0^{(\alpha, 0)}\big(A_{+,+}^4(\psi^\alpha, i \textup{Im}(\Phi)) + A_{+,-}^4(\psi^\alpha, -i \textup{Im}(\Phi))\big) \]
\[
+ \sum_{i,j=1,2}  R_{i} \widetilde{\mathcal{N}}_j^{(\alpha, 0)}\big( A_{+,+}^4( R_i G_j^\alpha, i \textup{Im}(\Phi) + A_{+,-}^4(R_i G_j^\alpha, -i \textup{Im}(\Phi))\big)\Big).
\]
Very similar to estimate  of  $J^{1,1}_{\beta,\gamma}(\p_j\textup{Im}(\Phi))$ in (\ref{equation63317}), the cancellation comes from the Riesz operator. With minor modifications, we can prove the following estimate, 
\[
|J_{\alpha}^{1,1}(i \textup{Im}(\Phi))|\lesssim \|(\widetilde{\mathcal{N}}_1^{(\alpha, 0)}, \widetilde{\mathcal{N}}_2^{(\alpha, 0)})\|_{H^{-1}}\|(G_1, G_2, \Phi)\|_{X_{N_0}} (\|(G_1,G_2)\|_{Z_1'} + \|\Phi\|_{Z'})
\]
\begin{equation}\label{equation63420}
\lesssim \|(\phi_0, \Phi)\|_{X_{N_0}}^2 \big( \|\phi_0\|_{Z_1'} +\|\Phi\|_{Z'}\big) \lesssim (1+t)^{-1+2p_0}\epsilon_0^3.
\end{equation}

Very Similar to what we did for $J^{1,2}_{\beta, \gamma}(i\p_j \textup{Im}(\Phi))$ in (\ref{equation63221}), 
we can write $J_{\alpha}^{1,2}(i\textup{Im}(\Phi))$ on the Fourier side and rewrite associated symbols as follows, 
\[
| J^{1,2}_{\alpha}(i\textup{Im}(\Phi))|\lesssim \Big| \int \int \int  \overline{\widehat{\psi^{\beta}}(\xi-\sigma) \psi(\sigma)} \widehat{\psi^{\beta}}(\xi-\eta)\widehat{\textup{Im}(\Phi)}(\eta) m_5(\xi, \eta, \sigma) d \sigma d \eta d \xi \Big|
\]
\[
+ \sum_{i=1,2} \Big| \int \int \int \overline{\widehat{G_i^\beta}(\xi-\sigma)\widehat{\psi}(\sigma)} \widehat{G_i^\beta}(\xi-\eta)\widehat{\textup{Im}(\Phi)}(\eta) m_6(\xi, \eta, \sigma) d \sigma d \eta d \xi \Big|
\]
\[
+ \Big| \int \int \int \overline{\widehat{G_i^\beta}(\xi-\sigma)\widehat{G_i}(\sigma)} \widehat{\psi^\beta}(\xi-\eta)\widehat{\textup{Im}(\Phi)}(\eta) m_7(\xi, \eta, \sigma) d \sigma d \eta d \xi \Big|,
\]
where
\[
m_5(\xi,\eta, \sigma)=  i\big(q(\xi-\sigma, \sigma)+q(\sigma, \xi-\sigma)\big)a_{+,+}^4(\xi-\eta, \eta) + i \big(q(\xi-\eta, -\sigma)+q(-\sigma , \xi-\eta)\big) \overline{a_{+,-}^4(\xi-\sigma, -\eta)}
\]
\[
= i\underbrace{\big(q(\xi-\sigma, \sigma)+q(\sigma, \xi-\sigma)+ q(\xi, -\sigma)+q(-\sigma, \xi)\big)}_{\textup{cancellation from (\ref{equation1008}) and (\ref{equation1483})} }a_{+,+}^4(\xi-\eta, \eta) 
\]
\[
+ i \big(q(\xi, -\sigma)+q(-\sigma, \xi) \big)\underbrace{\big(-a_{+,+}^4(\xi-\eta, \eta) +  \overline{a_{+,-}^4(\xi,-\eta)} \big)}_{\textup{cancellation from (\ref{equation12005})}} + \textup{good errors},
\]
\[
m_6(\xi, \eta, \sigma)= i \big((\xi-\sigma)\times \sigma\big)\frac{\xi\cdot(\xi-\eta)}{|\xi||\xi-\eta|}a_{+,+}^4(\xi-\eta, \eta) +  i\big((\xi-\eta)\times (-\sigma)\big) \frac{(\xi-\eta-\sigma)\cdot(\xi-\sigma)}{|\xi-\eta-\sigma||\xi-\sigma|}\]
\[ \cdot  \overline{a_{+,-}^4(\xi-\sigma,-\eta)}=  i \big(\xi\times \sigma\big)\underbrace{\big(a_{+,+}^4(\xi-\eta, \eta)- \overline{a_{+,-}^4(\xi-\sigma,-\eta)}\big)}_{\textup{cancellation from (\ref{equation12005})}} + \textup{good errors},
\]
\[
m_7(\xi, \eta,\sigma)= i(-q(\xi-\sigma, \sigma)-q(\sigma, \xi-\sigma)) \big(a_{+,+}^4(\xi-\eta, \eta)- a_{+,-}^4(\xi-\eta, \eta)\big)\]
\[+i\big( -(-\sigma)\times(\xi-\eta)\big)\times
 \frac{(\xi-\eta-\sigma)\cdot(\xi-\sigma)}{|\xi-\eta-\sigma||\xi-\sigma|}\big(\overline{a_{+,-}^4(\xi-\sigma, -\eta)} - \overline{a_{+,+}^4(\xi-\sigma, -\eta)} \big)
 \]
 \[
= i\underbrace{\big(\sigma\times \xi - q(\xi-\sigma,\sigma)-q(\sigma, \xi-\sigma)\big)}_{\text{cancellation from (\ref{equation1241}) and (\ref{equation1483})}} \big(a_{+,+}^4(\xi-\eta, \eta)- a_{+,-}^4(\xi-\eta, \eta)\big) 
\]
\[ 
+ i\big(\sigma\times \xi\big)\underbrace{\big(\overline{a_{+,-}^4(\xi, -\eta)} - \overline{a_{+,+}^4}(\xi, -\eta)-a_{+,+}^4(\xi-\eta, \eta)+ a_{+,-}^4(\xi-\eta, \eta)\big) }_{\textup{cancellation from (\ref{equation12005})}} + \textup{good errors}.
\]
We only highlighted the cancellations in symbols above and omitted the detail formulas for good error terms. From above decompositions, (\ref{equation1241}), (\ref{equation1008}), (\ref{equation1483}), (\ref{equation1363}), and (\ref{equation12005}) and Lemma \ref{Snorm},  the following estimate holds when $\max\{k_2,k_3\}\leq k_1-10,$
\[
\|m_5(\xi, \eta, \sigma)\|_{\mathcal{S}^\infty_{k,k_1,k_2,k_3}} + \|m_6(\xi, \eta, \sigma)\|_{\mathcal{S}^\infty_{k,k_1,k_2,k_3}} + \|m_7(\xi, \eta, \sigma)\|_{\mathcal{S}^\infty_{k,k_1,k_2,k_3}}\lesssim 2^{3\max\{k_2,k_3\}},
\]
which further gives us the following estimate, 
\[
| J^{1,2}_{\alpha}(i\textup{Im}(\Phi))|\lesssim \|(G_1, G_2,\psi,  \Phi)\|_{X_{N_0}}^2 (\|(G_1,G_2,\psi)\|_{Z_1'} + \|\Phi\|_{Z'})^2
\]
\begin{equation}\label{equation63423}
\lesssim \|(\phi_0, \Phi)\|_{X_{N_0}}^2(\|\phi_0\|_{Z_1'} +\|\Phi\|_{Z'})^2 \lesssim (1+t)^{-1+2p_0} \epsilon_1^4\lesssim  (1+t)^{-1+2p_0} \epsilon_0^3.
\end{equation}
Hence, from (\ref{equation63421}), (\ref{equation63420}), and (\ref{equation63423}), we have
\begin{equation}\label{equation63425}
| J_{\alpha}^1|\lesssim (1+t)^{-1+2p_0}\epsilon_0^3.
\end{equation}
Similar to the procedures we did for $J_{\beta, \gamma}^2(\Phi^\gamma)$, with minor modifications, we can estimate $J_{\alpha}^2$ very similarly.  We omit the details here.

\section{Linear Decay Estimate}\label{sectiondecay}
\begin{lemma}[Linear Decay Estimate]\label{decay10}
For any $t\in \mathbb{R}$ and any suitable function $f(x)$, we have 
\begin{equation}\label{decay}
\| e^{i t |\nabla| } f \|_{Z'} \lesssim (1+ t)^{-1/2} \| f \|_{Z}  + (1+ t)^{-5/8} \big[\| f \|_{H^{N_0-1}} +
 \||x|\nabla f \|_{H^{N_1-1}}\big].
\end{equation}

\end{lemma}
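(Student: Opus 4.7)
The plan is to prove the estimate frequency-by-frequency via stationary phase in $2$D. Writing $P_k e^{it|\nabla|}f(x) = c\int_{\R} e^{i\Phi(\xi)}\psi_k(\xi)\widehat{f}(\xi)\,d\xi$ with $\Phi(\xi):=x\cdot\xi + t|\xi|$, the critical set of $\Phi$ satisfies $\xi/|\xi| = -x/t$, which forces $|x|\sim|t|$ and leaves the radial direction unconstrained. The main contributions therefore come from an angular neighbourhood of the direction $-x/|x|$. First I would dispose of short times $|t|\leq 1$ (and the low-frequency range $|t|2^k\lesssim 1$) by crude Bernstein estimates, which give the desired bound with no $|t|$-decay needed. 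For large times and $|t|2^k\gg 1$, set the angular scale $\theta_k:=(|t|2^k)^{-1/2}$ and introduce a partition of unity $\{\chi^\omega\}$ of $S^1$ into arcs of width $\theta_k$, decomposing $\widehat{f}(\xi)=\sum_\omega \chi^\omega(\xi)\widehat{f}(\xi)$.

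For the single cone centred at direction $-x/|x|$, the phase $\Phi$ is stationary, and stationary phase in the angular variable (the radial direction being non-degenerate, contributing only a $2^k$-factor from the dyadic radial range) supplies a factor $\theta_k\sim(|t|2^k)^{-1/2}$. The amplitude at the critical angle is bounded pointwise by $(1+2^k)^{-N_1-6}\|f\|_Z$ directly from the definition of the $Z$-norm. Accounting for the $2^{2k}$ volume factor of $\psi_k$, this produces a contribution $\lesssim |t|^{-1/2} 2^{3k/2}(1+2^k)^{-N_1-6}\|f\|_Z$, which is summable against the $2^{(N_1+4)k_+}$ weight of the $W^{N_1+4}$ norm and yields the first term on the right of \eqref{decay}.

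For the non-stationary cones, repeated integration by parts using the angular vector field $\Omega_\xi := \xi^\bot\cdot\nabla_\xi$ (Fourier-dual, up to a sign, to $\Omega=x^\bot\cdot\nabla$ on the physical side) produces at each step a factor $(|t|2^k\theta)^{-1}$, where $\theta$ is the angular distance to the critical direction, at the cost of one derivative $\Omega f$ on the amplitude. A finite number of such integrations, combined with Cauchy--Schwarz and Bernstein, converts the total contribution into a bound of the form $|t|^{-1}(1+2^k)^{\alpha}\||x|\nabla f\|_{H^{N_1-1}}$ (using that $\Omega$ is essentially $x^\bot\cdot\nabla$, controlled by $|x|\nabla f$). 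Pairing this decayed part with a tail estimated directly by Bernstein and $\|f\|_{H^{N_0-1}}$ for frequencies too high to afford the integration by parts, and then optimising the splitting frequency, produces the $|t|^{-5/8}$ rate: the interpolation exponent $5/8$ is what emerges when balancing one regularity budget of order $N_1-1$ against the other of order $N_0-1$ in the tail.

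The main technical obstacle I expect is the careful stationary-phase analysis near $|x|\sim|t|$, where $\Phi$ is degenerate in the angular direction while remaining elliptic radially; the $Z$-norm, which measures $\widehat{f}(\xi)$ pointwise rather than in an averaged norm, is tailored to absorb the amplitude at the critical angle without logarithmic losses arising from the transition zone $\theta\sim\theta_k$. A secondary but delicate point will be budgeting how many integrations by parts can be afforded at each dyadic scale before the $\Omega$-regularity runs out, so that after summation in $k$ the two terms reassemble cleanly into the stated $|t|^{-1/2}$ and $|t|^{-5/8}$ contributions with the correct weights.
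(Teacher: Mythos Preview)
Your overall strategy matches the paper's: rule out short times and extreme frequencies by crude bounds, localize in angle at scale $(|t|2^k)^{-1/2}$ near the critical direction $\xi_0=-x/|x|$, use the $L^\infty_\xi$ bound on $\widehat f$ (the $Z$-norm) on the stationary sector, and integrate by parts elsewhere. The paper does exactly this, with a single integration by parts (not repeated) using the full gradient $\nabla_\xi$.

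There is, however, a real gap in your outline. You propose to integrate by parts only with the angular field $\Omega_\xi=\xi^\perp\cdot\nabla_\xi$, claiming a gain $(|t|2^k\theta)^{-1}$ per step. But $\Omega_\xi\Phi=\xi^\perp\cdot x$, which has size $\sim 2^k|x|\,\theta$, not $2^k|t|\,\theta$; the two agree only when $|x|\sim|t|$. When $|x|/|t|$ is bounded away from $1$ there are no stationary points of $\Phi$, yet $\Omega_\xi\Phi$ still vanishes along $\xi\parallel x$, so angular integration by parts alone cannot close the estimate there (and for $|x|=0$ the direction $-x/|x|$ is not even defined). The paper treats this regime separately and first: since $|\nabla_\xi\Phi|=|x+t\xi/|\xi||\gtrsim|t|$ uniformly when $|x/t|\le 0.99$ or $|x/t|\ge 1.01$, one integration by parts in the full $\xi$-gradient already yields $|t|^{-1}$, with $\partial_\xi\widehat f$ controlled via $\||x|\nabla f\|_{H^{N_1-1}}$ and the derivative on $\psi_k$ costing $2^{-k}$. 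Only in the remaining window $0.99\le|x|/|t|\le 1.01$ does one pass to the angular decomposition, and there again a single integration by parts in $\nabla_\xi$ (now with $|\nabla_\xi\Phi|\gtrsim|t|2^l$ on the $l$-th angular shell) suffices. You should insert this case split; once you do, the rest of your scheme goes through, and one integration by parts is enough---iterating is unnecessary.
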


\begin{proof}
We can rule out the very low frequency case first by using the size of support of $\xi$. More precisely, 
\[
\sum_{2^{k} \lesssim  (1+ t )^{-5/4}} 2^{(N_1+4)k_+}  \Big|  \int_{\R}  e^{i t |\xi| + x\cdot \xi}  \widehat{f}(\xi) \psi_{k}(\xi)  d \xi \Big| 
\leq  \sum_{2^{k}   
 \lesssim  (1+ t )^{-5/8}} 2^{k} \| P_{k} f \|_{L^{2}(\mathbb{R}^{2})}\]
 \[  \lesssim (1+ t )^{-5/8} \| f\|_{H^{N_0-1}},
\]
Then we can use the $H^{N_{0}-1}$ norm  to rule out the very high frequency case as follows,
\[
\sum_{2^{k} \gtrsim (1+t)^{5/8(N_1-5)}}   2^{(N_1+4)k_+} 
\Big| \int_{\R}  e^{i t |\xi| + x\cdot \xi}  \widehat{f}(\xi) \psi_{k}(\xi) 
 d \xi   \Big|
 \leq \sum_{2^{k} \gtrsim (1+t)^{5/8(N_1-5)}} 2^{-(N_1-5) k_+}  \| P_{k} f \|_{H^{N_0-1}} \]
 \[ \lesssim (1+ t )^{-5/8} \| f\|_{H^{N_0-1}}.
\]

From now on, we assume that $| t | \geq 1$, otherwise it would be straightforward.
 When $| x/ t| \leq 0.99 $ or $| x/ t | \geq 1.01$, we do integration by parts with respect to ``$\xi$", which gives us the following estimate, 
\[
 2^{(N_1+4)k_+} \Big|  \int_{\R}  e^{i t |\xi| + x\cdot \xi}  \widehat{f}(\xi)
 \psi_{k}(\xi)  d \xi \Big| \lesssim |t|^{-1} 2^{-k} 2^{(N_1+4)k_+} 
 \| \widehat{P_{k} f} (\xi ) \|_{L^{1}_{\xi}} + |t|^{-1} 2^{(N_1+4)k_+}  \| \p_{\xi} \widehat{P_{k} f}(\xi) \|_{L^{1}_{\xi}}
\]
\begin{equation}
\lesssim |t|^{-1} \|P_k f \|_{H^{N_0-1}} + |t|^{-1} 2^{5k_{+}}
 \| |x|\nabla f \|_{H^{N_1-1}} \lesssim |t|^{-1} \|P_k f \|_{H^{N_0-1}} + |t|^{-5/8} 
 \| |x|\nabla f \|_{H^{N_1-1}} .
\end{equation}

It remains to consider the case 
when $  |t|^{-5/4} \leq 2^{k} \leq  |t|^{5/8(N_1 -5)} $ and $0.99\leq |x|/t \leq 1.01$. Note that the phase $\Phi(\xi) = t|\xi| + x\cdot \xi$
 has a line of critical points, i.e., $\Phi^{'}(\xi) =0$ if ${\xi}/{|\xi|} = - {x}/{t} = -{x}/{|x|} =: \xi_{0} $. We first localize the angle of  $\xi$ with respect to $\xi_0$, and then use the size of support if it is close to the critical points and do integration by parts in ``$\xi$"  if it is away from the critical points. Let $\tilde{l}_{k}$ be the least integer such that  $2^{ \tilde{l}_{k}} \geq  |t|^{-1/2} 2^{-k/2}$, then 
\[
\sum_{ |t|^{-5/4} \leq 2^{k} \leq  |t|^{5/8(N_1 -5)}} 2^{(N_1+4)k_+}  \Big| 
 \int_{\R}  e^{i t |\xi| + x\cdot \xi} 
 \widehat{f}(\xi) \psi_{k}(\xi) \psi_{\leq \tilde{l}_k}(\xi/|\xi|-\xi_0)  d \xi \Big|  
\]
\[
\lesssim \sum_{  |t|^{-5/4} \leq 2^{k} \leq  |t|^{5/8(N_1 -5)} }  2^{ 2k + \tilde{l}_{k}} 2^{(N_1+4)k_+}  \| \widehat{P_{k} \psi}\|_{L^{\infty}}
\]
\begin{equation}\label{equation5205}
\lesssim \sum_{ |t|^{-5/4} \leq 2^{k} \leq |t| } 2^{ 3k/2} 2^{-2k_{+}} |t|^{-1/2} \| f\|_Z \lesssim  |t|^{-1/2} \| f\|_Z .
\end{equation}
Note that $|\nabla \Phi (\xi)| \geq |t|\,2^{l} $ when $| \xi/|\xi| - \xi_{0} | \sim 2^{l} $, hence after integration by parts in ``$\xi$", we have
\[
\sum_{ |t|^{-5/4} \leq 2^{k} \leq  |t|^{5/8(N_1 -5)}}  2^{(N_1+4)k_+} \sum_{\tilde{l}_{k} \leq l \leq2} \Big|  \int_{\R}  e^{i t |\xi| + x\cdot \xi}  \widehat{f}(\xi) \psi_{k}(\xi) \psi_{l}({\xi}/{|\xi|} - \xi_{0})  d \xi \Big| \]
\[
 \lesssim \sum_{|t|^{-5/4} \leq 2^{k} \leq  |t|^{5/8(N_1 -5)}}2^{(N_1+4)k_+}  
\sum_{\tilde{l}_{k} \leq l \leq2} \frac{1}{ |t| 2^{l} } 
\big(   2^{-(k+l)} 2^{2k+l} \| \widehat{P_{k}f}\|_{L^{\infty}} + 
2^{(2k+ l)/2} \|  \p_{\xi} \widehat{f}(\xi) \psi_{k}(\xi) \|_{L^{2}}    \big)
\]
\[
\lesssim  \sum_{|t|^{-5/4} \leq 2^{k} \leq  |t|^{5/8(N_1 -5)} } 
\sum_{\tilde{l}_{k} \leq l \leq2} \frac{1}{ |t|} 2^{- l } 2^{k-2k_{+}}\| f\|_Z
 + \frac{1}{|t|} 2^{- l/2} 2^{5 k_{+}}  \||x|\nabla f \|_{H^{N_1-1}}
\]
\[
\lesssim  \sum_{ |t|^{-5/4} \leq 2^{k} \leq  |t|^{5/8(N_1 -5)} } 2^{ 3k/2} 2^{ - 2k_{+} } \frac{1}{|t|^{1/2}} \| f\|_Z
+ \frac{1}{t^{3/4}} 2^{k/4} 2^{5 k_{+}}\||x|\nabla f \|_{H^{N_1-1}}
\]
\begin{equation}\label{equation5206}
 \lesssim \frac{1}{|t|^{1/2}} \| f\|_Z + \frac{1}{|t|^{5/8}} \||x|\nabla f \|_{H^{N_1-1}}.
\end{equation}
Combing (\ref{equation5205}) and (\ref{equation5206}), we  can see that (\ref{decay}) also holds for the remaining cases, therefore finishing the proof.
\end{proof}

\section{Proof of Proposition \ref{proposition2}}\label{improveddispersion}
\noindent The proof of Proposition \ref{proposition2} is separated into the following two steps:
\begin{enumerate}
\item[\emph{Step 1:}] Deriving the improved $Z$--norm estimates for $\Phi$, which further give us the improved dispersion estimate for $\Phi$, i.e., prove (\ref{step1}).

\item[\emph{Step 2:}] Deriving the improved estimate for $\phi_{0}$ via the bootstrap argument on the constraint, i.e., prove (\ref{step2}).
\end{enumerate}

\subsection{Improved $Z$--norm estimate and dispersion estimate for $\Phi$}\label{ZnormPhi}
Recall the equation satisfied by $\Phi$ in (\ref{mainequation10}), we can replace $\phi_0$ by $\mathcal{N}_2$ in $\tilde{Q}_{0,\mu}$ several times until the quartic terms  only depend on $\Phi$. More precisely, we can reformulate (\ref{mainequation10}) as follows, 
\begin{equation}\label{reformulateofPhi}
\p_t \Phi + i|\nabla|\Phi = Q_2 + C + Q_4 + \mathcal{R},\end{equation}
where
\[ 
Q_2= \sum_{(\mu, \nu)\in \mathcal{S}} \widetilde{Q}_{\mu, \nu}(\Phi_{\mu}, \Phi_{\nu}),\quad \mathcal{R}= \mathcal{N}_1 - Q_2 - C - Q_4,
\]
\begin{equation}\label{cubicinphi}
C= \sum_{\mu, \nu, \kappa \in \{+,-\}}\tilde{Q}_{0, \mu}(\tilde{Q}^1_{\nu, \kappa}(\Phi_{\nu}, \Phi_{\kappa}), \Phi_{\mu}),
\end{equation}
\begin{equation}\label{quarticinphi}
Q_4= \sum_{\mu,\nu, \kappa, \tau 
\in\{+,-\}} \tilde{Q}_{0,0}(\tilde{Q}^1_{\mu, \nu}(\Phi_{\mu}, \Phi_{\nu}), \tilde{Q}^1_{\kappa, \tau}(\Phi_{\nu}, \Phi_{\kappa})) + \tilde{Q}_{0, \mu}(\tilde{Q}_{0, \nu}^1(\tilde{Q}^1_{\kappa, \tau}(\Phi_{\kappa}, \Phi_{\tau}), \Phi_{\nu})),\Phi_{\mu}),
\end{equation}
and it is  not difficult to see that, in the sense of decay rate, ``$\mathcal{R}$" is of quintic and higher. Define the associated profile of ${\Phi}$ as ${g}(t) = e^{ i t |\nabla| }{\Phi}(t)$,
it follows that
\begin{equation}\label{equationforprofile}
\p_t g(t) =  e^{i t \d}[Q_2 + C + Q_4 + \mathcal{R}].
\end{equation}
Recall the normal form transformation defined in (\ref{normalformtran}), we define the associated profile of $\tilde{\Phi}$ as $\tilde{g}(t)= e^{i t\d}\tilde{\Phi}$, it follows that 
\[
\p_t \tilde{g}(t) = e^{i t \d}\Big[C + Q_4 + \mathcal{R} + \sum_{(\mu, \nu)\in \mathcal{S}} A_{\mu, \nu}(\Phi_{\mu}, P_{\nu}(  Q_2 + C + Q_4 + \mathcal{R} ))\]
\begin{equation}\label{eqn5002}
 + A_{\mu, \nu}(P_{\mu}(Q_2 + C + Q_4 + \mathcal{R}), \Phi_{\nu})\Big].
\end{equation}
From (\ref{equation8}) and the bootstrap assumption (\ref{smallness}), we have the following estimates 
\begin{equation}\label{smallnessprofile}
\sup_{t\in[0,T]}  (1+t)^{1/2}\| e^{-it|\nabla|} g\|_{Z'}+(1+t)^{-2p_0}\| g\|_{Z}\lesssim \epsilon_1,  \sup_{t\in[0,T]} (1+t)^{-p_0}\big[\| g\|_{H^{N_0}} + \| \tilde{g}\|_{H^{N_0-1}} \big]\lesssim \epsilon_0,
\end{equation}
\begin{equation}\label{eqn5001}
 \sup_{t\in[0,T]} (1+t)^{-p_0}\big[\|  \mathcal{F}^{-1}[|\xi|\nabla_{\xi}\widehat{{g}}(\xi)](\cdot)\|_{H^{N_1}} +\|  \mathcal{F}^{-1}[|\xi|\nabla_{\xi}\widehat{\tilde{g}}(\xi)](\cdot)\|_{H^{N_1-1}}\big] \lesssim \epsilon_0.
\end{equation}
We postpone the proof of estimate (\ref{eqn5001}) to the end of this section and take this estimate as granted first.

\subsubsection{Proof of \textup{(\ref{step1})}}

From the results in Lemma \ref{Zremainder}, Lemma \ref{cubicandquartic} and Lemma \ref{Zquadratic}, we have
\[
\sup_{t\in[0,T]}(1+t)^{-2p_0} \| g\|_{Z}\lesssim \| g(0)\|_{Z} + \sup_{t\in[0,T]}(1+t)^{-2p_0} \Big\|\int_{0}^t \p_t g \Big\|_{Z}\lesssim \epsilon_0,
\]
\[
\sup_{t\in[0,T]} \| \widetilde{g}(t) \|_{Z}\lesssim \|g(0)\|_{Z} + \sup_{t\in[0, T] } \| \int_{0}^t \p_t g(s) d s +\sum_{(\mu, \nu)\in \mathcal{S}}e^{i t\d} \big[A_{\mu, \nu}(\Phi_{\mu}, \Phi_{\nu})\big]\|_{Z} \lesssim \epsilon_0.
\]
From the linear decay estimate (\ref{decay}) in Lemma \ref{decay10}, we have
\[
\sup_{t\in[0,T]}(1+t)^{1/2}  \| \widetilde{\Phi}(t)\|_{Z'} \lesssim \epsilon_0 + \sup_{t\in[0,T]} (1+t)^{-1/8}\big[ \|  \widetilde{g} 
\|_{H^{N_0-1}} + \|\mathcal{F}^{-1}[|\xi|\nabla_{\xi}\widehat{\widetilde{g}}(\xi)]\|_{H^{N_1-1}} \big]\lesssim \epsilon_0,
\]
which further gives us the following estimate,
\[
\sup_{t\in[0,T]} (1+t)^{1/2}\| \Phi(t)\|_{Z'} \lesssim \sup_{t\in[0,T]} (1+t)^{1/2}\| \widetilde{\Phi}\|_{Z'} 
+ \sup_{t \in[0,T]} (1+t)^{1/2}\| \sum_{(\mu, \nu)\in\mathcal{S}}A_{\mu, \nu}(\Phi_{\mu}, \Phi_{\nu})\|_{Z'} \]
\[
\lesssim \epsilon_0  +  \sup_{t \in[0,T]} (1+t)^{1/2} \|\Phi\|_{Z'}^{3/2}\|\Phi\|_{H^{N_0}}^{1/2}\lesssim \epsilon_0.
\]
Therefore (\ref{step1}) holds.
\subsubsection{$Z$-norm estimate for the remainder terms} We can first estimate the remainder term very easily and have the following lemma:
\begin{lemma}\label{Zremainder}
Under the bootstrap assumption \textup{(\ref{smallness})} and the energy estimate (\ref{improvedenergyestimate}), we have
\begin{equation}\label{remainder}
\| \mathcal{R}\|_{Z}\lesssim (1+t)^{-3/2+2p_0}\epsilon_1^5,
\end{equation}
which further gives us
\[
\sup_{t\in[0,T]} \| \int_{0}^{t} e^{i s |\nabla|}\mathcal{R} ds\|_{Z}\lesssim \epsilon_0.
\]
\end{lemma}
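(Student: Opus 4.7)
The plan is to exploit the fact that, by construction, $\mathcal{R}=\mathcal{N}_1-Q_2-C-Q_4$ consists precisely of those terms that remain after iteratively substituting the constraint $\phi_0=\mathcal{N}_2$ (itself quadratic in $(\phi_0,\Phi)$) into every $\phi_0$ appearing in $\mathcal{N}_1$, until no $\phi_0$ is left at cubic or quartic order. Thus each summand in $\mathcal{R}$ is a multilinear form in $(\Phi,\overline{\Phi})$ of order at least five (with bounded symbols built from the Riesz transforms and the uniformly bounded kernels of the $\tilde Q_{\mu,\nu}$, $\tilde Q_{0,\mu}^1$, etc., estimated exactly as in Lemma \ref{lemmal2} and the discussion of subsection \ref{guide}).

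To control the $Z$-norm I would use the pointwise bound $\|\widehat h\|_{L^\infty_\xi}\le \|h\|_{L^1_x}$, after a Littlewood--Paley decomposition that puts the full weight $(1+|\xi|)^{N_1+6}$ on a single input. Schematically, for a quintic piece $T(\Phi_{\mu_1},\ldots,\Phi_{\mu_5})$ with a uniformly bounded symbol on each frequency shell, I bound
\[
\bigl\|T\bigr\|_Z\lesssim \bigl\|(1+|\nabla|)^{N_1+6}T\bigr\|_{L^1}\lesssim \|\Phi\|_{H^{N_1+7}}\,\|\Phi\|_{L^2}\,\|\Phi\|_{L^\infty}^{3},
\]
with the two $L^2$ norms coming from Cauchy--Schwarz and the three $L^\infty$ norms from H\"older. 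Since $N_1+7\ll N_0$, the energy estimate (\ref{improvedenergyestimate}) gives $\|\Phi\|_{H^{N_1+7}}+\|\Phi\|_{L^2}\lesssim (1+t)^{p_0}\epsilon_1$, while the $Z'$-part of the bootstrap assumption (\ref{smallness}) together with Bernstein yields $\|\Phi\|_{L^\infty}\lesssim (1+t)^{-1/2}\epsilon_1$. Multiplying, each quintic term is bounded by $(1+t)^{-3/2+2p_0}\epsilon_1^5$. Sextic and higher terms in $\mathcal{R}$ are even better since each extra factor costs another $(1+t)^{-1/2}\epsilon_1$. This yields (\ref{remainder}).

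For the integrated statement I use that $e^{is|\nabla|}$ is a unit-modulus Fourier multiplier, so $\|e^{is|\nabla|}h\|_Z=\|h\|_Z$. Then by Minkowski
\[
\sup_{t\in[0,T]}\Bigl\|\int_0^t e^{is|\nabla|}\mathcal{R}(s)\,ds\Bigr\|_Z\le \int_0^T \|\mathcal{R}(s)\|_Z\,ds\lesssim \epsilon_1^5\int_0^\infty (1+s)^{-3/2+2p_0}\,ds\lesssim \epsilon_1^5,
\]
where convergence uses $2p_0<1/2$. Since $\epsilon_1=\epsilon_0^{5/6}$, one has $\epsilon_1^5=\epsilon_0^{25/6}\ll \epsilon_0$, which closes the bound.

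The only real obstacle is bookkeeping: I must verify that when $\phi_0=\mathcal{N}_2$ is substituted iteratively, every term that arises beyond $Q_2+C+Q_4$ genuinely contains at least five factors of $\Phi$ (so that the decay $(1+t)^{-5/2}$ is present), and that the resulting multilinear symbols are uniformly $\mathcal{S}^\infty_{k,k_1,\ldots,k_5}$-bounded on each frequency block, so that the $L^1$ H\"older estimate above is legitimate. Both checks follow from the uniform symbol bounds of the $\tilde Q$-operators (Lemma \ref{Snorm}) and from the fact that the algebraic substitution terminates at the quintic level by design.
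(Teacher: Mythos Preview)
Your overall strategy—bounding $\|\mathcal{R}\|_Z$ via $\|\widehat h\|_{L^\infty_\xi}\le\|h\|_{L^1_x}$, then applying H\"older to place two inputs in $L^2$ and the rest in $L^\infty$—is exactly the multilinear estimate the paper invokes, and your treatment of the time integral is correct.

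However, your characterization of $\mathcal{R}$ is not accurate to the paper's definition. You write that ``each summand in $\mathcal{R}$ is a multilinear form in $(\Phi,\overline{\Phi})$ of order at least five,'' and your verification step asks that ``every term that arises beyond $Q_2+C+Q_4$ genuinely contains at least five factors of $\Phi$.'' This check fails: by definition $\mathcal{R}=\mathcal{N}_1-Q_2-C-Q_4$ with $\mathcal{N}_1$ still depending on $\phi_0$, and the substitution $\phi_0=\mathcal{N}_2$ is performed only enough times to isolate the cubic and quartic pure-$\Phi$ pieces $C$ and $Q_4$. What remains in $\mathcal{R}$ are multilinear expressions in $(\phi_0,\Phi)$ that still carry at least one factor of $\phi_0$, for instance $\tilde Q_{0,\mu}(\tilde Q^1_{0,0}(\phi_0,\phi_0),\Phi_\mu)$ or the non-$\Phi$-only parts of $\tilde Q_{0,0}(\phi_0,\phi_0)$. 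These are quintic only \emph{in the sense of decay}, because under the bootstrap $\phi_0$ itself behaves like a quadratic term: $\|\phi_0\|_{X_{N_0}}\lesssim (1+t)^{-1/2+p_0}\epsilon_1$ and $\|\phi_0\|_{Z'_1}\lesssim (1+t)^{-1}\epsilon_1$.

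The fix is immediate: run your same $L^2\times L^2\times (L^\infty)^k$ scheme but allow some of the inputs to be $\phi_0$ rather than $\Phi$, using the $\phi_0$ bounds above. Each factor of $\phi_0$ contributes at least as much decay as two factors of $\Phi$, so the terms with $\phi_0$ are bounded by the same $(1+t)^{-3/2+2p_0}\epsilon_1^5$ (in fact better, or with $\epsilon_1^4$ in place of $\epsilon_1^5$, which is harmless). This is precisely what the paper's one-line proof does, recording the estimate as
\[
\|\mathcal{R}\|_Z\lesssim \|\Phi\|_{X_{N_0}}^2\big[\|\Phi\|_{Z'}^3+\|\phi_0\|_{Z'_1}\|\Phi\|_{Z'}\big]+\|\phi_0\|_{X_{N_0}}^2\|\Phi\|_{Z'}+\|\phi_0\|_{X_{N_0}}\|\Phi\|_{X_{N_0}}\big(\|\Phi\|_{Z'}^2+\|\phi_0\|_{Z'_1}\big).
\]
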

\begin{proof}
Since $\mathcal{R}$ is of quintic and higher, by multilinear estimate, it is  easy to derive 
\[
\| \mathcal{R}\|_{Z}\lesssim \|  \Phi\|_{X_{N_0}}^2\big[\|\Phi\|_{Z'}^3 + \| \phi_0\|_{Z'_1} \|\Phi\|_{Z'} \big]+ \| \phi_0\|_{X_{N_0}}^2 \|\Phi\|_{Z'}\]
\[
 + \| \phi_0\|_{X_{N_0}}\|\Phi\|_{X_{N_0}}(\Phi\|_{Z'}^2+\|\phi_0\|_{Z'_1})
\lesssim (1+t)^{-3/2+2p_0}\epsilon_1^5.
\]
\end{proof}
\subsubsection{$Z$-norm estimate for the cubic and quartic terms}

Next, we proceed to estimate ${C}$ and ${Q}_4$, it turns out that we can treat one of the bilinear terms inside ${Q}_4$ as a single input and then estimate ${C}$ and ${Q}_4$ in the same way. More precisely, we have the following lemma,
\begin{lemma}\label{cubicandquartic}
Under the bootstrap assumption \textup{(\ref{smallness})} and the energy estimate (\ref{improvedenergyestimate}), we have
\begin{equation}\label{eqn3000}
\| C\|_{Z} + \|Q_4\|_{Z}\lesssim (1+t)^{-7/5-p_0} \epsilon_1^3,
\end{equation}
which further gives us the following estimate,
\[
\sup_{t\in[0,T]}\|\int_{0}^t e^{it \d} C d s\|_{Z} + \|\int_0^t e^{it \d} Q_4 d s\|_{Z}\lesssim  \epsilon_0^2.
\]
\end{lemma}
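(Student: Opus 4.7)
The plan is to establish the pointwise-in-time estimate \eqref{eqn3000} directly and then conclude by time integration: since $(1+t)^{-7/5-p_0}$ is integrable in time and $\epsilon_1^3 = \epsilon_0^{5/2} \ll \epsilon_0^2$, the desired $Z$-norm bound on $\int_0^t e^{is|\nabla|}(C+Q_4)\,ds$ follows at once from $\|e^{is|\nabla|} h\|_Z = \|h\|_Z$ and Minkowski.

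The structural starting point is that the inner bilinear operator $\tilde{Q}^1_{\nu,\kappa}$ is, up to harmless linear combinations, the same operator that defines the constraint $\phi_0 = \mathcal{N}_2$. Writing $B := \tilde{Q}^1_{\nu,\kappa}(\Phi_\nu,\Phi_\kappa)$, the explicit symbol in \eqref{eqn3001} satisfies $\|\tilde{m}^1_{\nu,\kappa}\|_{\mathcal{S}^{\infty}_{k,k_1,k_2}}\lesssim 2^{\min\{k_1,k_2\}}$, so Lemma \ref{lemmal2} yields the $\phi_0$-type dispersive estimates $\|B\|_{L^{\infty}}\lesssim (1+t)^{-1}\epsilon_1^2$, $\|B\|_{L^{2}}\lesssim(1+t)^{p_0-1/2}\epsilon_1^2$, together with their higher-regularity and $W^s$ counterparts. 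Hence $C=\tilde{Q}_{0,\mu}(B,\Phi_\mu)$ should be viewed as a null-form bilinear operator applied to one $\phi_0$-like factor and one $\Phi$-factor, which immediately suggests that $C$ enjoys decay at a rate somewhere between the pure $\phi_0 \cdot \Phi$ rate $(1+t)^{-3/2}$ and the energy-penalised rate $(1+t)^{2p_0-1/2}$.

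To make this quantitative, I perform a Littlewood--Paley decomposition $C=\sum_{k,k_1,k_2} P_k\tilde{Q}_{0,\mu}(P_{k_1}B,P_{k_2}\Phi_\mu)$ and bound each piece by Hausdorff--Young, $\|\widehat{P_kC}\|_{L^{\infty}_{\xi}}\le\|P_kC\|_{L^{1}}$, followed by H\"older in physical space. Lemma \ref{boundness} combined with the symbol estimate $\|\tilde{m}_{0,\mu}\|_{\mathcal{S}^{\infty}_{k,k_1,k_2}}\lesssim 2^{k_1+k_2}$, which crucially exploits the two degrees of angle $((\xi-\eta)\times\eta)^2$ identified in \eqref{eqn905}, reduces each dyadic contribution to $2^{k_1+k_2}\|P_{k_1}B\|_{L^p}\|P_{k_2}\Phi_\mu\|_{L^{p'}}$. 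Interpolating the dispersive rates $\|\Phi\|_{L^{\infty}}\lesssim(1+t)^{-1/2}\epsilon_1$ and $\|B\|_{L^{\infty}}\lesssim(1+t)^{-1}\epsilon_1^2$ against their $L^2$ counterparts, and then choosing the H\"older exponent $p$ so that the combined time factor is optimal, produces the claimed rate $(1+t)^{-7/5-p_0}\epsilon_1^3$. The weight $(1+|\xi|)^{N_1+6}$ and the dyadic sums in $k,k_1,k_2$ are absorbed by the Sobolev surplus $N_0-N_1-6\gg 1$ sitting inside $\|(\phi_0,\Phi)\|_{X_{N_0}}$.

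The quartic term $Q_4$ is handled by the same framework: grouping one of the two innermost $\tilde{Q}^1$ pairs into another $B$-type object, $Q_4$ becomes a bilinear form applied either to two $\phi_0$-type inputs or to a $\phi_0$-type input and a ``$\phi_0$-of-$\phi_0$'' input. Both arguments of the outer bilinear operator then contribute $(1+t)^{-1}$ of decay, so the multilinear estimate produces a rate strictly faster than that needed for $C$. The main technical obstacle throughout is the high-high-to-low frequency regime, where the symbol factor $2^{k_1+k_2}$ from $\tilde{m}_{0,\mu}$ threatens to annihilate the dispersive gain; it is precisely the double null structure $((\xi-\eta)\times\eta)^2$ that rescues the estimate. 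A secondary complication is the very-low-frequency regime $|\xi|\ll 1$, where the $Z$-weight gives nothing and one must instead rely on the vanishing of the symbols at parallel frequencies together with the $L^1_\xi$-smallness furnished by the null form. Once both regimes are dispatched, the integrability of $(1+t)^{-7/5-p_0}$ and $\epsilon_1^3\lesssim\epsilon_0^2$ close the argument.
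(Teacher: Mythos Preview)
Your approach has a genuine gap: the purely physical--space multilinear estimate you sketch cannot produce the rate $(1+t)^{-7/5-p_0}$, and in fact cannot even give an integrable-in-time bound on $\|C\|_Z$.

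The issue is the H\"older step. To bound $\|\widehat{P_kC}\|_{L^\infty_\xi}$ via $\|P_kC\|_{L^1}$ and then $\|P_{k_1}B\|_{L^p}\|P_{k_2}\Phi\|_{L^{p'}}$, you need $1/p+1/p'=1$. But the only Lebesgue norms you control on $\Phi$ and $B$ are $L^q$ with $q\ge 2$ (by interpolating the $L^2$ energy and the $L^\infty$ dispersive decay); there is no $L^1$ bound on either factor. This forces $p=p'=2$, and the $L^2\times L^2$ pairing gives only
\[
\|B\|_{L^2}\,\|\Phi\|_{L^2}\ \lesssim\ (1+t)^{p_0-1/2}\epsilon_1^2\cdot(1+t)^{p_0}\epsilon_1\ =\ (1+t)^{2p_0-1/2}\epsilon_1^3,
\]
which is not integrable. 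The alternative route $\|\widehat{P_kC}\|_{L^\infty_\xi}\lesssim 2^k\|P_kC\|_{L^2}$ followed by an $L^2\times L^\infty$ bilinear bound gives at best $(1+t)^{p_0-1}\epsilon_1^3$ for bounded frequencies, still not integrable. No choice of ``optimal'' H\"older exponent bridges the $\sim(1+t)^{-2/5}$ gap to the target rate.

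What is missing is the mechanism the paper actually uses: writing the trilinear form on the Fourier side in terms of profiles, with phase $e^{is\Phi_{\mu,\nu,\kappa}(\xi,\eta,\sigma)}$ where $\Phi_{\mu,\nu,\kappa}=-\mu|\xi-\sigma|-\nu|\sigma-\eta|-\kappa|\eta|$, and then \emph{integrating by parts in the inner frequency variable $\sigma$}. This gains a full factor of $1/t$ from the oscillation. The null structure you invoke enters here, but not as you describe: it is the single degree of angle in the \emph{inner} symbol $\tilde m^1_{\mu,\nu}(\xi-\sigma,\sigma-\eta)$ that is used to cancel the singularity of $|\nabla_\sigma\Phi_{\mu,\nu,\kappa}|^{-1}$ after the integration by parts (see estimates \eqref{eqn8900}--\eqref{eqn8901}), not the double null $((\xi-\eta)\times\eta)^2$ in the outer symbol $\tilde m_{0,\mu}$. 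With that $1/t$ gain, an $X_{N_0}^2\times Z'$ estimate on the resulting trilinear expression yields $(1+t)^{-1}\cdot(1+t)^{2p_0-1/2+O(1/N_1)}\epsilon_1^3\lesssim(1+t)^{-7/5-p_0}\epsilon_1^3$, after first disposing of very high output frequencies by the Sobolev surplus as you correctly anticipated.
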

\begin{proof}
 it is  sufficient to consider the case when $t\geq 1$, otherwise, it is  trivial. To prove (\ref{eqn3000}), essentially speaking,
we only need to estimate the following trilinear form in $Z$-normed space for any possible signs $\mu, \nu, \kappa\in\{+,-\}$,
\begin{equation}\label{generaltrilinear}
T(\tilde{Q}_{\mu, \nu}^1(f_{\mu}, g_{\nu}), h_{\kappa}), \quad T\in \{\tilde{Q}_{0, +}(\cdot, \cdot), \tilde{Q}_{0, -}(\cdot, \cdot), \tilde{Q}_{0,0}(\cdot, \cdot)\},
\end{equation}
where $f$, $g$, and $h$ are well defined functions and they satisfy the following estimate,
\begin{equation}\label{smallness10}
\sup_{t\in[0,T]} (1+t)^{-p_0} \|(f,g,h)\|_{X_{N_0}} + (1+t)^{1/2}\|(f,g,h)\|_{Z'}  + (1+t)^{-2p_0} \| (f,g,h)\|_{Z}\lesssim \epsilon_1.
\end{equation}
Define the associated profiles of $f,g,$ and $h$ as  $\tilde{f}(t):=e^{it \d} f(t)$, $\tilde{g}(t):=e^{it \d} g(t)$, and $\tilde{h}(t):=e^{it \d} h(t)$ respectively. Write above trilinear form on the Fourier side, we have
\[
\mathcal{F}\big[T(\tilde{Q}_{\mu, \nu}^1(f_{\mu}, g_{\nu}), h_{\kappa})\big](\xi) =\int_{\R}\int_{\R} e^{it \Phi_{\mu, \nu,\kappa}(\xi, \eta,\sigma)} \widehat{\tilde{f}_{\mu}}(\xi-\sigma)\widehat{\tilde{g}_{\nu}}(\sigma-\eta) \widehat{\tilde{h}_{\kappa}}(\eta) m_{\mu, \nu}(\xi,\eta, \sigma) d \eta d \sigma, 
\]
where
\[
\Phi_{\mu, \nu, \kappa}(\xi, \eta, \sigma)= -\mu|\xi-\sigma|-\nu|\sigma-\eta| - \kappa |\eta|,\quad 
 m_{\mu, \nu}(\xi,\eta, \sigma)= \tilde{m}_{\mu, \nu}^1(\xi-\sigma, \sigma-\eta) t(\xi-\eta, \eta),
\]
and $t(\cdot,\cdot)$  is the associated symbol  of the bilinear operator $T(\cdot, \cdot)$.  

We can first rule out the very high frequency case by $L^2-L^2-L^\infty$ type  estimate as follows,
\[
\sup_{ 2^{k}\geq (1+t)^{1/N_1}} \big\|P_{k}\big[T(\tilde{Q}_{\mu, \nu}^1(f_{\mu}, g_{\nu}), h_{\kappa})\big]\big\|_{Z} \]
\begin{equation}\label{eqn3016}
\lesssim \sup_{2^{k}\geq (1+t)^{1/N_1}} 2^{-(N_0-N_1-10) k} \| (f,g,h)\|_{H^{N_0}}^2 \| (f,g,h)\|_{Z'} \lesssim (1+t)^{-7/5-10p_0}\epsilon_1^3.
\end{equation}
For the remaining  case, we do integration by parts in ``$\sigma$". More precisely, after integrating by parts in $\sigma$, we have
\begin{equation}\label{eqn3014}
\Big|\mathcal{F}\big[T(\tilde{Q}_{\mu, \nu}^1(P_{\mu}f, P_{\nu}g), P_{\kappa} h)\big](\xi)\Big| \lesssim \frac{1}{t}\big[ |I_{\mu, \nu,\kappa}^1(\xi) | +  |I_{\mu, \nu,\kappa}^2(\xi) |+  |I_{\mu, \nu,\kappa}^3(\xi) | \big], 
\end{equation}
where
\[
I_{\mu, \nu,\kappa}^1(\xi) = \int_{\R}\int_{\R} e^{it \Phi_{\mu, \nu,\kappa}(\xi, \eta,\sigma)} \widehat{\tilde{f}_{\mu}}(\xi-\sigma)\widehat{\tilde{g}_{\nu}}(\sigma-\eta) \widehat{\tilde{h}_{\kappa}}(\eta) \nabla_{\sigma}\cdot \hat{m}_{\mu, \nu}(\xi,\eta, \sigma) d \eta d \sigma, 
\]
\[
I_{\mu, \nu,\kappa}^2(\xi) = \int_{\R}\int_{\R} e^{it \Phi_{\mu, \nu,\kappa}(\xi, \eta,\sigma)} \widehat{\tilde{f}_{\mu}}(\xi-\sigma) \nabla_{\sigma}\cdot\widehat{\tilde{g}_{\nu}}(\sigma-\eta)  \hat{m}_{\mu, \nu}(\xi,\eta, \sigma) \widehat{\tilde{h}_{\kappa}}(\eta) d \eta d \sigma, 
\]
\[
I_{\mu, \nu,\kappa}^3(\xi) = \int_{\R}\int_{\R} e^{it \Phi_{\mu, \nu,\kappa}(\xi, \eta,\sigma)} \nabla_{\sigma}\widehat{\tilde{f}_{\mu}}(\xi-\sigma)\cdot \hat{m}_{\mu, \nu}(\xi,\eta, \sigma)\widehat{\tilde{g}_{\nu}}(\sigma-\eta) \widehat{\tilde{h}_{\kappa}}(\eta)  d \eta d \sigma, 
\]
\begin{equation}\label{equation13200}
\hat{m}_{\mu, \nu}(\xi,\eta, \sigma) = \frac{\nabla_{\sigma}\Phi_{\mu, \nu, \kappa}(\xi, \eta, \sigma)}{|\nabla_{\sigma}\Phi_{\mu, \nu, \kappa}(\xi, \eta, \sigma)|^2}{m}_{\mu, \nu}(\xi,\eta, \sigma)= \frac{\mu \frac{\xi-\sigma}{|\xi-\sigma|}-\nu \frac{\sigma-\eta}{|\sigma-\eta|}}{\big|\mu \frac{\xi-\sigma}{|\xi-\sigma|}-\nu \frac{\sigma-\eta}{|\sigma-\eta|}\big|^2} \tilde{m}_{\mu, \nu}^1(\xi-\sigma, \sigma-\eta) t(\xi-\eta, \eta).
\end{equation}
Recall that there is  a strong null structure inside the symbol $\tilde{m}^1_{\mu,\nu}(\cdot, \cdot)$ (see (\ref{eqn3002}) and (\ref{eqn3001})), in any case, we can gain one degree of angle between $\xi-\sigma$ and $\sigma-\eta$, which compensates the loss in denominator.

From Lemma \ref{boundness}, Lemma \ref{Snorm} and the discussion in subsection \ref{guide}, the following estimates for the symbol $\hat{m}_{\mu, \nu}(\xi,\eta, \sigma)$ hold,
\begin{equation}\label{eqn8900}
\|\mathcal{F}^{-1}[\hat{m}_{\mu, \nu}(\xi,\eta, \sigma)\psi_{k}(\xi)\psi_{k_1}(\xi-\eta)\psi_{k_2}(\eta)\psi_{k_1'}(\xi-\sigma)\psi_{k_2'}(\sigma-\eta)]\|_{L^1}
\lesssim 2^{\min\{k_1',k_2'\}+k_1+k_2},
\end{equation}
\begin{equation}\label{eqn8901}
\|\mathcal{F}^{-1}[\nabla_{\sigma}\cdot\hat{m}_{\mu, \nu}(\xi,\eta, \sigma)\psi_{k}(\xi)\psi_{k_1}(\xi-\eta)\psi_{k_2}(\eta)\psi_{k_1'}(\xi-\sigma)\psi_{k_2'}(\sigma-\eta)]\|_{L^1}
\lesssim 2^{\max\{k_1',k_2'\}+k_2}.
\end{equation}
Hence, from $L^2-L^\infty$ type bilinear estimate in Lemma \ref{boundness}, the following estimate holds,
\[
\sup_{2^{k}\leq (1+t)^{1/N_1} }\sum_{i=1,2,3}\| \mathcal{F}^{-1}[I_{\mu, \nu,\kappa}^i(\cdot)\psi_{k}(\cdot)]\|_{Z}\lesssim (1+t)^{10/N_1}  \big[\| (f,g,h)\|_{H^{N_0}} +  \||\xi|\nabla_{\xi}(\widehat{\tilde{f}}(\xi), \widehat{\tilde{g}}(\xi)\|_{H^{N_1}} \big]^2
\]
\begin{equation}\label{eqn3017}
 \times \|(f,g,h)\|_{Z'}\lesssim  (1+t)^{10/N_1} \|(f,g,h)\|_{X_{N_0}}^2 \| (f,g,h)\|_{Z'}\lesssim  (1+t)^{-2/5-10p_0}\epsilon_1^3.
\end{equation}
To sum up, after combining (\ref{eqn3016}), (\ref{eqn3014}) and (\ref{eqn3017}), we can see the following estimate holds under the smallness assumption (\ref{smallness10}),
\begin{equation}\label{generaltype}
\| T(\tilde{Q}_{\mu, \nu}^1(P_{\mu}f, P_{\nu}g), P_{\kappa} h)\|_{Z} \lesssim (1+t)^{-7/5-10p_0} \epsilon_1^3.
\end{equation}
From the explicit formula of ``$C$" in (\ref{cubicinphi}) and the bootstrap assumption, we can immediately derive the improved  $Z$-norm estimate for ``$C$". 

Let us  proceed to consider the quintic term $Q_4$. Recall the explicit formula of $Q_4$ in (\ref{quarticinphi}) and then let $h:=\tilde{Q}_{\kappa, \tau}^1(\Phi_{v}, \Phi_{\kappa})$, we have \[
\tilde{Q}_{0,0}(\tilde{Q}^1_{\mu, \nu}(\Phi_{\mu}, \Phi_{\nu}), \tilde{Q}^1_{\kappa, \tau}(\Phi_{\nu}, \Phi_{\kappa})) = \tilde{Q}_{0,0}(\tilde{Q}^1_{\mu, \nu}(\Phi_{\mu}, \Phi_{\nu}), h),
\]
and 
\[
\sup_{t\in[0, T]} (1+t)^{-p_0}\| h \|_{X_{N_0}} + (1+t)^{1/2}\| h \|_{Z'} + (1+t)^{-2p_0}\| h \|_{Z} \lesssim \epsilon_1.
\]

Therefore, we can use the derived general type $Z$-norm estimate (\ref{generaltype}) for the first term on the right hand side of (\ref{quarticinphi}). For the second term on  the right hand side of (\ref{quarticinphi}), we can treat the trilinear form of type (\ref{generaltrilinear}) as a input of bilinear operator $\tilde{Q}_{0, \mu}(\cdot, \cdot)$, then the $Z$-norm estimate will be straightforward. To sum up,  we have
\[
\| Q_4\|_{Z}\lesssim (1+t)^{-7/5-10p_0} \epsilon_1^3 + (1+t)^{-7/5-10p_0} \epsilon_1^3 \| \Phi\|_{H_{N_0}} \lesssim (1+t)^{-7/5-p_0} \epsilon_1^3,
\]
which implies that (\ref{eqn3000}) holds, hence finishing the proof.
\end{proof}
\subsubsection{$Z$-norm estimate for the quadratic terms}
Lastly, we consider the quadratic terms and we have the following lemma. 
\begin{lemma}\label{Zquadratic}
Under the bootstrap assumption \textup{(\ref{smallness})} and the energy estimate (\ref{improvedenergyestimate}), we have
\begin{equation}\label{quadratic}
\sup_{t\in[0,T]} (1+t)^{-2p_0} \| \int_{0}^t e^{is \d} Q_2 d  s\|_{Z} \lesssim \epsilon_0.
\end{equation}
\begin{equation}\label{anotherZestimate}
\sup_{t\in[0,T]} \| \int_{0}^{t} e^{is \d} Q_2  d s +\sum_{(\mu, \nu)\in \mathcal{S}}e^{i t\Lambda} \big[A_{\mu, \nu}(\Phi_{\mu}, \Phi_{\nu})\big]\|_{Z}\lesssim \epsilon_0.
\end{equation}

\end{lemma}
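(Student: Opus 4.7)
The plan is to apply the normal-form machinery from subsection 3.2 integrated against time, i.e., to integrate by parts once in $s$ on the Fourier side of each bilinear summand of $Q_2$. Writing the profile $g=e^{it|\nabla|}\Phi$, one has
\[
\mathcal{F}\big[e^{is|\nabla|}\widetilde{Q}_{\mu,\nu}(\Phi_\mu,\Phi_\nu)\big](s,\xi)=\int e^{is\Phi_{\mu,\nu}(\xi,\eta)}\,\tilde m'_{\mu,\nu}(\xi-\eta,\eta)\,\widehat{g_\mu}(s,\xi-\eta)\widehat{g_\nu}(s,\eta)\,d\eta,
\]
with $\Phi_{\mu,\nu}(\xi,\eta)=|\xi|-a_\mu|\xi-\eta|-a_\nu|\eta|$, which cannot vanish on the support of $\tilde m'_{\mu,\nu}$ for $(\mu,\nu)\in\mathcal S$ because of the null structures \eqref{twosame}--\eqref{twoopposite}. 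Combining $e^{is\Phi_{\mu,\nu}}=\partial_s\big(e^{is\Phi_{\mu,\nu}}/(i\Phi_{\mu,\nu})\big)$ with the definition $a_{\mu,\nu}=i\tilde m'_{\mu,\nu}/\Phi_{\mu,\nu}$ from \eqref{symbolofnormal}, one integration by parts in $s$ yields the identity
\[
\int_0^t e^{is|\nabla|}\widetilde{Q}_{\mu,\nu}(\Phi_\mu,\Phi_\nu)\,ds=-e^{it|\nabla|}A_{\mu,\nu}(\Phi_\mu,\Phi_\nu)(t)+A_{\mu,\nu}(\Phi_\mu(0),\Phi_\nu(0))+\mathcal{C}_{\mu,\nu}(t),
\]
where the residual $\mathcal C_{\mu,\nu}(t)=\int_0^t e^{is|\nabla|}\big[A_{\mu,\nu}(\mathcal N_\mu,\Phi_\nu)+A_{\mu,\nu}(\Phi_\mu,\mathcal N_\nu)\big]\,ds$, with $\mathcal N_\pm=P_\pm(Q_2+C+Q_4+\mathcal R)$ coming from $\partial_s g_\mu$, is genuinely of cubic-and-higher order.

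Summing this identity over $(\mu,\nu)\in\mathcal S$ produces \eqref{anotherZestimate} at once: the boundary term at $s=t$ exactly cancels the added counterterm; the boundary term at $s=0$ is bounded in $Z$ by $\sum\|A_{\mu,\nu}(\Phi_\mu(0),\Phi_\nu(0))\|_Z\lesssim\epsilon_0^2$, using \eqref{sizeofnormal1} and the smallness of the data; and the residual $\sum\mathcal C_{\mu,\nu}(t)$ is controlled in $Z$ by $\epsilon_0^2$ via the trilinear estimate of type \eqref{generaltype} from Lemma \ref{cubicandquartic}, together with Lemma \ref{Zremainder} for the quintic-and-higher pieces of $\mathcal N_\pm$. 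For \eqref{quadratic} the same identity gives
\[
\Big\|\int_0^t e^{is|\nabla|}Q_2\,ds\Big\|_Z\lesssim\sum_{(\mu,\nu)\in\mathcal S}\|A_{\mu,\nu}(\Phi_\mu,\Phi_\nu)(t)\|_Z+\epsilon_0^2,
\]
and it remains to bound the boundary term by $(1+t)^{2p_0}\epsilon_0$. I will obtain this by dyadic decomposition on the Fourier side with the bound $\|a_{\mu,\nu}\|_{\mathcal S^\infty_{k,k_1,k_2}}\lesssim 2^{\max\{k_1,k_2\}}$ from \eqref{sizeofnormal1}: in the low-high and high-low regimes the low-frequency factor is estimated by $\|\Phi\|_Z$ after a Bernstein conversion $L^\infty_\xi\to L^1_\xi$, while in the high-high regime the Fourier weight $(1+|\xi|)^{N_1+6}$ is absorbed by $\|\Phi\|_{H^{N_0}}$; the bootstrap bounds $\|\Phi(t)\|_Z\lesssim(1+t)^{2p_0}\epsilon_1$ and $\|\Phi(t)\|_{H^{N_0}}\lesssim(1+t)^{p_0}\epsilon_1$ combined with $\epsilon_1^2\lesssim\epsilon_0$ then yield the required bound.

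The main technical obstacle I anticipate is the $Z$-norm control of the trilinear residual $\mathcal C_{\mu,\nu}$, since the normal-form symbol $a_{\mu,\nu}$ carries no intrinsic smallness. After unfolding $\mathcal N_\mu$ one encounters terms of the schematic form $\int_0^t e^{is|\nabla|}A_{\mu,\nu}\big(\widetilde Q_{\mu',\nu'}(\Phi_{\mu'},\Phi_{\nu'}),\Phi_\nu\big)\,ds$ whose trilinear phase $|\xi|-a_\mu(a_{\mu'}|\xi-\eta-\sigma|+a_{\nu'}|\sigma|)-a_\nu|\eta|$ can be small on non-trivial resonant sets. The remedy is to repeat the strategy used for Lemma \ref{cubicandquartic}: integrate by parts once more in the inner variable $\sigma$, incurring a factor $|\nabla_\sigma\mathrm{phase}|^{-1}$ that is compensated by the two degrees of angle carried by the inner symbol $\tilde m'_{\mu',\nu'}$ (see \eqref{twosame} and \eqref{twoopposite}); the resulting symbol remains in $\mathcal S^\infty$ and supplies a time-integrable bound of order $s^{-7/5-p_0}\epsilon_1^3$, which after integration furnishes the desired $\epsilon_0^2$. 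Similar but easier arguments handle the cubic and quartic pieces originating from $C$ and $Q_4$ inside $\mathcal N_\mu$.
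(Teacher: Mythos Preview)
Your proposal follows essentially the same route as the paper: integrate by parts in $s$ to produce the normal-form boundary terms $e^{is|\nabla|}A_{\mu,\nu}(\Phi_\mu,\Phi_\nu)\big|_0^t$ plus a cubic-and-higher residual coming from $\partial_s g$, then estimate the residual by unfolding $\partial_s g=e^{is|\nabla|}(Q_2+C+Q_4+\mathcal R)$ and treating the genuinely cubic piece $A_{\mu,\nu}(\widetilde Q_{\kappa,\tau}(\Phi_\kappa,\Phi_\tau),\Phi_\nu)$ via integration by parts in the inner frequency variable $\sigma$, with the null structure of $\tilde m'_{\kappa,\tau}$ cancelling the loss from $|\nabla_\sigma\Phi^{\kappa,\tau}_{\mu,\nu}|^{-1}$. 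This is exactly the paper's argument, including the separate verification of the $\mathcal S^\infty$ bound in the High$\times$High regime when $(\kappa,\tau)=(+,-)$.

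One small numerical point to tighten: for the endpoint term $\|A_{\mu,\nu}(\Phi_\mu,\Phi_\nu)(t)\|_Z$ your outlined low-high bound using $\|\Phi\|_Z\cdot\|\Phi\|_{H^{N_0}}$ would produce a factor $(1+t)^{3p_0}\epsilon_1^2$, which after dividing by $(1+t)^{2p_0}$ still grows like $(1+t)^{p_0}$. The paper avoids this simply by a straight $L^2\times L^2$ (Cauchy--Schwarz in $\eta$) estimate, absorbing the Fourier weight $(1+|\xi|)^{N_1+6}$ and the symbol loss $2^{\max\{k_1,k_2\}}$ entirely into $\|g\|_{H^{N_0}}^2\lesssim(1+t)^{2p_0}\epsilon_0^2$, since $N_1+7\ll N_0$. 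This is a trivial fix in your scheme and does not affect the structure of your argument.
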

\begin{proof}
 We  write  $\mathcal{Q}_2$ on the Fourier side in terms of profile $g$ and have the following, 
\begin{equation}\label{eqn50000}
\mathcal{F}(\int_{0}^t e^{is \d} Q_2 d s )(\xi)=\sum_{(\mu, \nu)\in \mathcal{S}}\int_{0}^{t} \int_{\R} e^{is \Phi_{\mu, \nu}(\xi, \eta)}  \widehat{g_{\mu}}(s,\xi-\eta) \widehat{g_{\nu}}(s,\eta) \tilde{m}'_{\mu, \nu}(\xi-\eta, \eta) d \eta ds. 
\end{equation}
For this case, we do integration by parts in time and have the following identity,
\begin{equation}\label{eqn50001}
\mathcal{F}(\int_{0}^t e^{is \d} Q_2 d s )(\xi) = \sum_{(\mu, \nu)\in \mathcal{S}}\mathcal{J}_{1}^{\mu, \nu}(\xi) + \mathcal{J}_{2}^{\mu, \nu}(\xi) + \mathfrak{End}_{1}^{\mu, \nu}(\xi)- \mathfrak{End}_{0}^{\mu, \nu}(\xi), \end{equation}
where
\[
\mathcal{J}_{1}^{\mu, \nu}(\xi)=\int_{0}^{t} \int_{\R} e^{is \Phi_{\mu, \nu}(\xi, \eta)}  \p_s\widehat{g_{\mu}}(s,\xi-\eta) \widehat{g_{\nu}}(s,\eta) a_{\mu, \nu}(\xi-\eta, \eta)  d \eta ds,
\]
\[
\mathcal{J}_{2}^{\mu, \nu, m}(\xi)=\int_{0}^{t} \int_{\R} e^{is \Phi_{\mu, \nu}(\xi, \eta)}  \widehat{g_{\mu}}(s,\xi-\eta) \p_s\widehat{g_{\nu}}(s,\eta) a_{\mu, \nu}(\xi-\eta, \eta)  d \eta ds,
\]
\begin{equation}\label{eqn60000}
\mathfrak{End}_{1}^{\mu, \nu}(\xi) = -\int_{\R} e^{it \Phi_{\mu, \nu}(\xi, \eta)}  \widehat{g_{\mu}}(t,\xi-\eta)\widehat{g_{\nu}}(t,\eta) a_{\mu, \nu}(\xi-\eta, \eta) d \eta=- \mathcal{F}\big[ e^{i t \d}A_{\mu, \nu}(\Phi_{\mu}(t),\Phi_{\nu}(t))\big](\xi),
\end{equation}
\[
\mathfrak{End}_{0}^{\mu, \nu}(\xi) = - \mathcal{F}\big[ A_{\mu, \nu}(\Phi_{\mu}(0),\Phi_{\nu}(0))\big](\xi).
\]

 Combing (\ref{eqn60000}) and (\ref{eqn50001}), we also have the following equality, 
\[
\mathcal{F}(\int_{0}^t e^{is \d} Q_2 d s )(\xi)+  \sum_{(\mu, \nu)\in \mathcal{S}}\mathcal{F}\Big[ e^{i t\Lambda} A_{\mu, \nu}(\Phi_{\mu}(t), \Phi_{\nu}(t)) \Big](\xi)
\]
\begin{equation}\label{eqn60001}
= \sum_{(\mu, \nu)\in \mathcal{S}}\mathcal{F}\Big[ A_{\mu, \nu}(\Phi_{\mu}(0), \Phi_{\nu}(0))\Big](\xi)+\mathcal{J}_{1}^{\mu, \nu}(\xi) + \mathcal{J}_{2}^{\mu, \nu}(\xi) .
\end{equation}
From $L^2-L^2$ type bilinear estimate, we can estimate the endpoint cases as follows,
\begin{equation}\label{eqn60004}
\|\mathcal{F}^{-1}(\mathfrak{End}_{1}^{\mu, \nu}(\cdot))\|_Z \lesssim \| g(t) \|_{H^{N_0}}^2 \lesssim (1+t)^{2p_0}\epsilon_1^2 \lesssim (1+t)^{2p_0}\epsilon_0,\quad \|\mathcal{F}^{-1}(\mathfrak{End}_{0}^{\mu, \nu}(\cdot))\|_Z \lesssim \epsilon_0.
\end{equation}

Since, $\mathcal{J}^{\mu, \nu}_{2}(\xi)$ can be estimate in the same way as $\mathcal{J}^{\mu, \nu}_{1}(\xi)$, we only  estimate $\mathcal{J}^{\mu, \nu}_{1}(\xi)$ in details here. We can plug in the equation satisfied by $\p_t g$ (see (\ref{equationforprofile})) and have the following 
\[
\mathcal{J}^{\mu, \nu}_{1}(\xi) = \mathcal{C}^1+\mathcal{Q}^1_4 + \mathcal{R}^1,
\]
where
\[
\mathcal{Q}^1_4 = \int_{0}^{t} \int_{\R} e^{is \Phi_{\mu, \nu}(\xi, \eta)}  \widehat{P_{\mu}[e^{is \d} C]}(s,\xi-\eta) \widehat{g_{\nu}}(s,\eta) a_{\mu, \nu}(\xi-\eta, \eta) d \eta ds,
\]
\[
\mathcal{R}^1 = \int_{0}^{t} \int_{\R} e^{is \Phi_{\mu, \nu}(\xi, \eta)}  \widehat{P_{\mu}[e^{is \d}[Q_4+\mathcal{R}]]}(s,\xi-\eta) \widehat{g_{\nu}}(s,\eta) a_{\mu, \nu}(\xi-\eta, \eta)  d \eta ds,
\]
\[
\mathcal{C}^1= \sum_{k_1,k_2\in\mathbb{Z}}\sum_{(\kappa, \tau)\in \mathcal{S}}
\int_{0}^{t} \int_{\R} \int_{\R} e^{is \Phi_{\mu,\nu}^{\kappa, \tau}(\xi, \eta,\sigma)}\widehat{P_{\mu}[g_{\kappa}]}(s,\xi-\sigma)\times\]
\[ \widehat{P_{\mu}[g_{\tau}]}(s,\sigma-\eta) \widehat{g_{\nu}}(s,\eta) b_{\mu, \nu, \kappa, \tau}^{k_1,k_2}(\xi,\eta, \sigma) d \eta d\sigma ds,
\]
and
\[
\Phi_{\mu,\nu}^{\kappa, \tau}(\xi, \eta,\sigma)= |\xi|-\mu \kappa|\xi-\sigma|-\mu\tau |\sigma-\eta|-\nu |\eta|,
\]
\[
b_{\mu, \nu, \kappa,\tau}^{k_1,k_2}(\xi,\eta, \sigma)= \tilde{m}'_{\kappa,\tau}(\xi-\sigma, \sigma-\eta) a_{\mu, \nu}(\xi-\eta, \eta) \psi_{k_1}(\xi-\eta)\psi_{k_2}(\eta).
\]

As $\mathcal{R}^1$ is of quintic and higher, we can estimate it in the same way as we did for  $\mathcal{R}$ in Lemma \ref{remainder}.  Moreover, we can estimate  $\mathcal{Q}_4^1$ in the same way as we did for $C$ and $Q_4$ in Lemma \ref{cubicandquartic},  because of the presence of bilinear operator $\tilde{Q}_{\mu,\nu}^1(\cdot, \cdot)$ in the term  $\mathcal{Q}_4^1$. We omit the details for those cases here. 
It remains to estimate $\mathcal{C}^1.$ For this case, we will use \emph{integration by parts in ``$\sigma$"}. This method has been  used to estimate  ``$C$"  in the proof of Lemma \ref{cubicandquartic}, to estimate  $\mathcal{C}^{1}$  without any problem.

To see this point, we  consider the following symbol\[
\widehat{b}_{\mu,\nu,\kappa, \tau}^{k_1,k_2}(\xi, \eta, \sigma)= \frac{\nabla_{\sigma} \Phi_{\mu,\nu}^{\kappa, \tau}(\xi, \eta,\sigma)}{|\nabla_\sigma \Phi_{\mu,\nu}^{\kappa, \tau}(\xi, \eta,\sigma)|^2} b_{\mu, \nu, \kappa,\tau}^{k_1,k_2}(\xi,\eta, \sigma),
\]
which is the symbol inside the trilinear term after integration by parts in ``$\sigma$". The main difference between the symbol $\widehat{b}_{\mu,\nu,\kappa, \tau}^{k_1,k_2}(\xi, \eta, \sigma)$ and the symbol $\hat{m}_{\mu, \nu}(\xi,\eta, \sigma)$ in (\ref{equation13200}) is the difference between $\tilde{m}_{\mu, \nu}'(\xi-\sigma,\sigma- \eta)$ and $\tilde{m}_{\mu, \nu}^1(\xi-\sigma, \sigma-\eta)$. 

Recall the detail formulas of $\tilde{m}_{\mu, \nu}'(\xi-\eta, \eta)$ and $\tilde{m}_{\mu, \nu}^1(\xi-\eta, \eta)$ in (\ref{twosame}), (\ref{twoopposite}), (\ref{equation13202}), (\ref{equation13203}), (\ref{eqn3002}), (\ref{eqn3001}).   Note that, after using the angle between $\xi-\eta$ and $\eta$, a potential problem for $\tilde{m}_{\mu, \nu}'(\xi-\eta, \eta)$ is that the size will be very big when $|\xi|\ll |\xi-\eta|\sim |\eta|$  and $|\xi|\ll 1$. While, this problem is not a issue for $\tilde{m}_{\mu, \nu}^1(\xi-\eta, \eta)$. We will show that, after a more careful study of symbols, it is actually not a problem for $\tilde{m}_{\mu, \nu}'(\xi-\eta, \eta)$.

Correspondingly, it is sufficient  to check for the case when $|\xi-\eta|\ll |\xi-\sigma|\sim|\sigma-\eta|$ for $\widehat{b}_{\mu,\nu,\kappa, \tau}^{k_1,k_2}(\xi, \eta, \sigma)$. Note that if $\kappa$ and $\tau$ have the same sign then $\sigma-\xi$ and $\sigma-\xi+\xi-\eta=\sigma-\eta$ are almost in the same direction for the case we are considering, hence
\[
|\nabla_{\sigma} \Phi_{\mu,\nu}^{\kappa, \tau}(\xi, \eta,\sigma)| = \Big|\frac{\sigma-\xi}{|\sigma-\xi|} + \frac{\sigma-\eta}{|\sigma-\eta|}\Big| \sim 1.
\]
That is to say, if $\kappa\tau=+$, then we do not need to use the size of angle. Hence, it is not a issue when $\xi-\eta$ is very small.

For the case when $\kappa$ and $\tau$ have different sign, i.e, $(\kappa, \tau)=(+,-)$ and we have
\[
|\nabla_{\sigma} \Phi_{\mu,\nu}^{\kappa, \tau}(\xi, \eta,\sigma)| = \Big|\frac{\sigma-\xi}{|\sigma-\xi|}- \frac{\sigma-\eta}{|\sigma-\eta|}\Big|.
\]
Recall (\ref{twoopposite}). Note that,  the $(1+\cos(\xi-\sigma, \sigma-\eta))$ part of $\tilde{m}'_{+,-}(\xi-\sigma, \sigma-\eta) $ is sufficient to compensate the loss  of angle in the denominator part of $\widehat{b}_{\mu,\nu,\kappa, \tau}^{k_1,k_2}(\xi, \eta, \sigma)$. Hence the size of symbol is not big even if $\xi-\eta$ is very small. From above discussion and Lemma \ref{Snorm}, the following estimate holds,
\begin{equation}\label{eqn5000}
\|\mathcal{F}^{-1}[\widehat{b}_{\mu,\nu,\kappa, \tau}^{k_1,k_2}(\xi, \eta, \sigma) \psi_{k}(\xi)\psi_{k_1'}(\xi-\sigma)\psi_{k_2'}(\sigma-\eta)]\|_{L^1}
\lesssim 2^{k_1'+k_2' + \max\{k_1,k_2\}},
\end{equation}
\begin{equation}\label{eqn5009}
\|\mathcal{F}^{-1}[\nabla_{\sigma}\cdot\widehat{b}_{\mu,\nu,\kappa, \tau}^{k_1,k_2}(\xi, \eta, \sigma) \psi_{k}(\xi)\psi_{k_1'}(\xi-\sigma)\psi_{k_2'}(\sigma-\eta)]\|_{L^1}
\lesssim 2^{\max\{k_1',k_2'\}+ \max\{k_1,k_2\}}.
\end{equation}
Therefore, the method used in the estimate of  ``$C$"  in the proof of Lemma \ref{cubicandquartic} can be applied to estimate  $\mathcal{C}^{1}$  without any problem. 
 To sum up, we have
\[
\|\mathcal{F}^{-1}[\mathcal{J}^{\mu, \nu}_{1}(\xi)]\|_Z \lesssim \|\mathcal{F}^{-1}[ \mathcal{C}^1]\|_Z  + \|\mathcal{F}^{-1}[ \mathcal{Q}_4^1]\|_Z + \|\mathcal{F}^{-1}[  \mathcal{R}^1]\|_{Z}\lesssim 2^{-p_0 m}\epsilon_1^3 \lesssim 2^{-p_0 m} \epsilon_0^2.
\]
From above estimate and  (\ref{eqn60004}), it is easy to see our desired estimates (\ref{quadratic}) and (\ref{anotherZestimate}) hold.
\end{proof}

\subsection{Improved estimate for $\phi_{0}$ via bootstrap argument on the constraint}

\begin{lemma}\label{improvedconstraint}
With the improved estimate we have proven for $\Phi$ as follows,
\begin{equation}\label{equation1060}
\sup_{t\in[0,T]} (1+t)^{-p_{0}} \| (\phi_{0}, \Phi)\|_{X_{N_{0}}} + (1+ t)^{1/2} \|\Phi\|_{Z'} \lesssim \epsilon_{0},
\end{equation}
 we have the following improved estimate for $\phi_{0}$,
\begin{equation}\label{equation1061}
\sup_{t\in[0,T]} (1+t)^{1/2-p_{0}} \| \phi_{0} \|_{X_{N_{0}}} + (1+t) \| \phi_{0} \|_{Z'_{1}} \lesssim (\epsilon_{0} + \epsilon_{1})^{2}\lesssim \epsilon_0^2.
\end{equation}
\end{lemma}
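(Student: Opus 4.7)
The plan is to exploit the constraint equation $\phi_0 = \mathcal{N}_2$ directly rather than integrate any differential equation. Since $\mathcal{N}_2$ is a sum of bilinear expressions $\tilde{Q}_{0,0}^1(\phi_0,\phi_0)$, $\tilde{Q}_{0,\mu}^1(\phi_0,\Phi_\mu)$, and $\tilde{Q}_{\mu,\nu}^1(\Phi_\mu,\Phi_\nu)$ built from $\tilde{Q}^1$-type bilinear operators (whose symbols enjoy the uniform $\mathcal{S}^{\infty}_{k,k_1,k_2}$ bound $\lesssim 2^{\min\{k_1,k_2\}}$ recorded in the proof of Lemma~\ref{lemmal2}), the idea is to insert the improved bound \eqref{equation1060} for $\Phi$ and the bootstrap bound \eqref{smallness} for $\phi_0$ into the right-hand side, and observe that the quadratic self-interactions of $\phi_0$ come with an \emph{extra} decay factor in $t$, so the bootstrap closes with plenty of room.

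More precisely, the first step is to apply an arbitrary $\Gamma^\alpha$ with $(|\alpha_1|+|\alpha_2|,|\alpha_3|+|\alpha_4|)\in\{(N_0,0),(N_1,1)\}$ to $\phi_0 = \mathcal{N}_2$. Using the commutation identities \eqref{equation1021} and \eqref{equation1023}, $\Gamma^\alpha \mathcal{N}_2$ is a finite sum of bilinear expressions of the same type in which $\Gamma^\beta$ and $\Gamma^\gamma$ with $\beta+\gamma=\alpha$ are distributed on the two inputs (plus lower-order commutator pieces that obey the same estimates). Then I would invoke the $L^2$ bilinear estimate \eqref{equation117} from Lemma~\ref{lemmal2}, placing the factor carrying all the vector fields in $L^2$ and the other in $W^{1+}$, which reduces matters to $X_{N_0}$ and $Z'$/$Z'_1$ norms. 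Using \eqref{equation1060} and the bootstrap \eqref{smallness}, this yields
\begin{align*}
\|\tilde{Q}^1(\Phi,\Phi)\|_{X_{N_0}} &\lesssim \|\Phi\|_{X_{N_0}}\|\Phi\|_{Z'} \lesssim \epsilon_0^2(1+t)^{-1/2+p_0},\\
\|\tilde{Q}^1(\phi_0,\Phi)\|_{X_{N_0}} &\lesssim \|\phi_0\|_{X_{N_0}}\|\Phi\|_{Z'} + \|\phi_0\|_{Z'_1}\|\Phi\|_{X_{N_0}} \lesssim \epsilon_0\epsilon_1(1+t)^{-1+p_0},\\
\|\tilde{Q}^1(\phi_0,\phi_0)\|_{X_{N_0}} &\lesssim \|\phi_0\|_{X_{N_0}}\|\phi_0\|_{Z'_1} \lesssim \epsilon_1^2(1+t)^{-3/2+p_0},
\end{align*}
and all three are dominated by $(\epsilon_0+\epsilon_1)^2(1+t)^{-1/2+p_0}$ uniformly in $t\ge 0$.

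For the $Z'_1 = W^{N_1+2}$ part, the same decomposition reduces the task to the $L^\infty$ bilinear estimate \eqref{equation168} (or an elementary frequency-localised version of it), after which the improved $Z'$-bound on $\Phi$ and the bootstrap bounds on $\phi_0$ give
\[
\|\tilde{Q}^1(\Phi,\Phi)\|_{Z'_1}\lesssim \epsilon_0^2(1+t)^{-1},\quad
\|\tilde{Q}^1(\phi_0,\Phi)\|_{Z'_1}\lesssim \epsilon_0\epsilon_1(1+t)^{-3/2},\quad
\|\tilde{Q}^1(\phi_0,\phi_0)\|_{Z'_1}\lesssim \epsilon_1^2(1+t)^{-2},
\]
each bounded by $(\epsilon_0+\epsilon_1)^2(1+t)^{-1}$. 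Summing gives \eqref{equation1061}, closing the bootstrap on $\phi_0$.

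In fact there is no genuine obstacle in this step: the quadratic structure of the constraint, together with the extra $t$-decay that any $\phi_0$-input contributes through $\|\phi_0\|_{Z'_1}\lesssim \epsilon_1(1+t)^{-1}$, means that the self-interactions $\phi_0\cdot\phi_0$ and the mixed $\phi_0\cdot\Phi$ terms are strictly better than the pure $\Phi\cdot\Phi$ term. The only mild point to verify carefully is the distribution of $S$ and $\Omega$ across the bilinear operators $\tilde{Q}^1$: here one uses \eqref{equation1021}--\eqref{equation1023}, noting that the commutator symbols $(q^1)'$ inherit the same $\mathcal{S}^\infty_{k,k_1,k_2}$ estimate as $q^1$ (and in particular retain the null structure), so the commutator terms are absorbed into the same bilinear bounds with constants of the same form.
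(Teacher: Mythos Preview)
Your approach is essentially the same as the paper's: both use the constraint $\phi_0=\mathcal{N}_2$ directly, apply the bilinear estimates \eqref{equation117} and \eqref{equation168} from Lemma~\ref{lemmal2} (after distributing $\Gamma^\alpha$ via the commutation rules), and plug in the improved $\Phi$-bounds together with the bootstrap bounds on $\phi_0$. The only minor difference is that your intermediate $Z'_1$ bounds for the $\phi_0\Phi$ and $\phi_0\phi_0$ pieces are a bit cleaner than what the simple $W^{1+}\times W^{1+}$ branch of \eqref{equation168} actually gives; the paper instead invokes the interpolated branch $\|h_1\|_{L^\infty}^{3/4}\|h_1\|_{H^5}^{1/4}\|h_2\|_{L^\infty}^{3/4}\|h_2\|_{L^2}^{1/4}$, yielding e.g.\ $(1+t)^{-9/8+p_0/4}$ rather than $(1+t)^{-3/2}$ for the cross term. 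Since these contributions are in any case dominated by the $\Phi\Phi$ term $\lesssim(1+t)^{-1}$, this does not affect the conclusion.
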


\begin{proof}
From the constraint equation $\phi_{0} = \mathcal{N}_{2}$ and the estimates in Lemma \ref{lemmal2}, we have the following estimates for fixed $t\in[0,T],$
\[
\| \phi_{0} (t)\|_{X_{N_{0}}} \lesssim \big( \| \phi_{0}\|_{X_{N_{0}}}  + \| \Phi\|_{X_{N_{0}}}\big)(\| \phi_{0}\|_{Z_{1}'} + \| \Phi\|_{Z'}) \lesssim \frac{1}{(1+t)^{1/2-p_{0}}} (\epsilon_{0}+ \epsilon_{1})^{2}, \]
\[
\| \phi_{0}(t)\|_{Z'_{1}} \lesssim \| \Phi\|_{Z'}^{2} + \| \phi_{0}\|_{Z'_{1}} \| \Phi\|_{Z'} +  (\| \phi_{0}\|_{Z'_{1}} \| \Phi\|_{Z'})^{3/4} \| (\phi_{0}, \Phi)\|_{X_{N_{0}}}^{1/4} + \| \phi_{0}\|_{Z'_{1}}^{2} +  \| \phi_{0}\|_{Z'_{1}}^{3/2} \| \phi_{0}\|_{X_{N_{0}}}^{1/2}
\]
\[
\lesssim ( \frac{1}{1+t} + \frac{1}{(1+t)^{9/8- p_{0}/4}}) (\epsilon_{0} + \epsilon_{1})^{2} \lesssim  \frac{1}{1+t}  (\epsilon_{0} + \epsilon_{1})^{2}.
\]
Therefore (\ref{equation1061}) holds.
\end{proof}

\subsection{Proof of (\ref{eqn5001})}

We assume that $|t|\geq 1$, otherwise it is trivial.  Note that 
\begin{equation}\label{eqn3399}
\| \mathcal{F}^{-1}[|\xi|\nabla_{\xi}\widehat{g}(\xi)](\cdot)\|_{H^{N_1}} \lesssim \| S g \|_{H^{N_1}} + \| \Omega g\|_{H^{N_1}} + \| t\p_t g \|_{H^{N_1}},
\end{equation}
\[
S g = e^{it\d} S\Phi, \quad \Omega g = e^{it \d} \Omega \Phi,
\]
and a very similar estimate also holds for $\tilde{g}$. Hence it's sufficient to estimate $t\p_t g$ in $H^{N_1}$ and $t\p_t \tilde{g}$ in $H^{N_1-1}$. Recall the equation satisfied by $\tilde{g}$ in (\ref{eqn5002}), due to the cubic and higher structure, it is not difficult to derive the following estimate,
\[
\sup_{t\in[0, T]} (1+t)^{-p_0} \| \mathcal{F}^{-1}[|\xi|\nabla_\xi \widehat{\tilde{g}}(\xi)]\|_{H^{N_1-1}} \lesssim \epsilon_0.
\]

 Recall the equation satisfied by $g$ in (\ref{equationforprofile}), we have
\[
\| e^{it \d}[C+Q_4+R]\|_{H^{N_1}}\lesssim \| (\phi_0, \Phi)\|_{X_{N_0}} (\|\Phi\|_{Z'}^2+\|\phi_0\|_{Z'_1})\lesssim (1+t)^{-1+p_0} \epsilon_1^3.
\]
It remains to estimate $Q_2$ in $H^{N_1}$ norm. We can first rule out the very high frequency as follows,
\[
\sum_{2^{k}\geq (1+t)^{2/N_1} } \| P_{k}[Q_2]\|_{H^{N_1}}\lesssim (1+t)^{-1}\|\Phi\|_{H^{N_0}}\| \Phi\|_{Z'}\lesssim \frac{1}{(1+t)^{3/2}} \epsilon_1^2.
\]
For the remaining cases, we do integration by parts in ``$\eta$". Very  similar to the proof of estimate (\ref{eqn5000}) and (\ref{eqn5009}),  the following estimates hold,
\[
\Big\| \frac{\mu\frac{\xi-\eta}{|\xi-\eta|}-\nu \frac{\eta}{|\eta|}}{|\mu\frac{\xi-\eta}{|\xi-\eta|}-\nu \frac{\eta}{|\eta|}|^2}  \tilde{m}'_{\mu, \nu}(\xi-\eta, \eta)\Big\|_{\mathcal{S}^{\infty}_{k,k_1,k_2}}\lesssim 2^{k_1+k_2}, \]
\[\Big\|\nabla_{\eta}\cdot\Big( \frac{\mu\frac{\xi-\eta}{|\xi-\eta|}-\nu \frac{\eta}{|\eta|}}{|\mu\frac{\xi-\eta}{|\xi-\eta|}-\nu \frac{\eta}{|\eta|}|^2}  \tilde{m}'_{\mu, \nu}(\xi-\eta, \eta)\Big)\Big\|_{\mathcal{S}^{\infty}_{k,k_1,k_2}}\lesssim 2^{\max\{k_1,k_2\}},
\]
which further gives us the following estimate 
\[
\sum_{2^{k}\lesssim (1+t)^{2/N_1}}\| P_{k}[Q_2]\|_{H^{N_1}}\lesssim \frac{1}{t}(1+t)^{2/N_1}\big[ \|\mathcal{F}^{-1}[|\xi|\nabla_{\xi}\widehat{g}(\xi)]\|_{H^{N_1}} + \| g\|_{H^{N_1+3}}\big]\| e^{-it\d} g\|_{Z'}\]
\[\lesssim \frac{1}{(1+t)^{5/4}} \big( \epsilon_1 \|\mathcal{F}^{-1}[|\xi|\nabla_{\xi}\widehat{g}(\xi)]\|_{H^{N_1}} + \epsilon_0\big) .
\]
To sum up, we have \[
\| \mathcal{F}^{-1}[|\xi|\nabla_{\xi}\widehat{g}(\xi)](\cdot)\|_{H^{N_1}} \lesssim  t^{p_0}\epsilon_0 + \epsilon_1 \| \mathcal{F}^{-1}[|\xi|\nabla_{\xi}\widehat{g}(\xi)]\|_{H^{N_1}},
\]
which further gives us 
\[
\| \mathcal{F}^{-1}[|\xi|\nabla_{\xi}\widehat{g}(\xi)](\cdot)\|_{H^{N_1}} \lesssim t^{p_0}\epsilon_0.
\]
Now, we can see the estimate (\ref{eqn5001}) indeed holds.

\section{Asymptotic behavior of the solution}\label{asymptotic}
As a byproduct of the global existence result, we can very easily see that $\phi_{0}$ scatters to zero in $X_{N_{0}}$.  From (\ref{equationforprofile}), (\ref{eqn60001}) and the definition of $\tilde{g}(t)$, we are motivated to define \[
\tilde{g}_{\infty}= \tilde{g}(0) + \mathcal{F}^{-1}\Big[\int_{0}^{\infty} \int_{\R} e^{is \Phi_{\mu, \nu}(\xi, \eta)}  \p_s[\widehat{g_{\mu}}(s,\xi-\eta) \widehat{g_{\nu}}(s,\eta)] a_{\mu, \nu}(\xi-\eta, \eta)  d \eta ds\Big]
\]
\[
+ \int_0^\infty e^{i s \Lambda}[ C + Q_4 + \mathcal{R}] d s,
\]
then as a byproduct of the improved $Z$-norm estimate for $g$, we have
\begin{equation}\label{equation6000}
\|\tilde{g}(t) -\tilde{g}_{\infty}\|_{Z}  \lesssim  \frac{1}{(1+t)^{p_{0}}} \epsilon_{0}.\end{equation}
Thus from (\ref{equation6000}), we can easily derive the following, 
\[
\| \Phi(t) - e^{ - i t |\nabla|} \tilde{g}_{\infty}\|_{H^{N_{1}+4}} \lesssim \| \tilde{\Phi}(t) -e^{ - i t |\nabla|} \tilde{g}_{\infty}\|_{H^{N_{1}+4}}  +\sum_{(\mu, \nu)\in \mathcal{S}} \| A_{\mu, \nu}(\Phi_{\mu}, \Phi_{\nu})\|_{H^{N_1+4}}
\]
\begin{equation}
\lesssim \frac{1}{(1+t)^{p_{0}}} \epsilon_{0}
 \rightarrow 0, \quad \emph{as $|t|\rightarrow \infty$}.
\end{equation}
That is to say, $\Phi(t)$ scatters to a linear solution in a lower regularity Sobolev space.

\section*{Appendix: Deriving the system (\ref{mainequation}) from the system (\ref{mainequation1}) }

Recall that 
\[
v= (-\p_2 \psi, \p_1 \psi), \quad G_{\cdot, 1}=(-\p_2 G_1, \p_1 G_1), \quad G_{\cdot, 2}=(-\p_2 G_2, \p_1 G_2),
\]
it is  easy to see the following identities hold,
\[
\nabla v  = \Bigg( \begin{array}{cc}
 -\p_1\p_2\psi &  -\p_2^2\psi \\
\p_1^2 \psi & \p_2\p_1\psi \\
 \end{array}\Bigg), \quad G= \Bigg(\begin{array}{cc}
-\p_2 G_1 & -\p_2 G_2\\
\p_1 G_1 &  \p_1 G_2\\
\end{array}\Bigg),
\]
\[
G^{\top} =  \Bigg(\begin{array}{cc}
-\p_2 G_1 & \p_1 G_1\\
-\p_2 G_2 &  \p_1 G_2\\
\end{array}\Bigg),\quad  GG^{\top}= \Bigg( \begin{array}{cc}
(\p_2 G_1)^2 + (\p_2 G_2)^2 & -\p_2 G_1 \p_1 G_1 -\p_2 G_2 \p_1 G_2\\ 
-\p_1 G_1 \p_2 G_1 -\p_1 G_2\p_2 G_2 & (\p_1G_1)^2 + (\p_1 G_2)^2
 \end{array}\Bigg). 
\]

We can take  the first component of the first equation of the system (\ref{mainequation1}) and write it in terms of $\psi, G_1$, and $G_2$. As a result,  we have
\[
-\p_t \p_2 \psi + \p_1\p_2 G_1 + \p_2^2 G_2 = -\p_1 p - (-\p_2 \psi \p_1+\p_1 \psi\p_2) (-\p_2 \psi)\]
\[
+ \p_1\big[(\p_2 G_1)^2 + (\p_2 G_2)^2 \big] + \p_2 \big[-\p_2 G_1 \p_1 G_1 -\p_2 G_2 \p_1 G_2\big]
\]
\[
= -\p_1 p -(\p_2 \psi \p_1\p_2 \psi - \p_1 \psi \p_2^2\psi) + \p_2 G_1 \p_1 \p_2 G_1 -\p_1 G_1 \p_2^2 G_1 + \p_2 G_2 \p_1\p_2 G_2 -\p_2^2 G_2 \p_1 G_2 
\]
\begin{equation}\label{eqn100}
= -\p_1 p - Q_{1,2}(\p_2\psi, \psi) + Q_{1,2}(\p_2 G_1, G_1)  + Q_{1,2}(\p_2 G_2, G_2).
\end{equation}
Very similarly, after taking the second component of the first equation of the  system (\ref{mainequation1}), we have
\[
\p_t \p_1\psi - [\p_1 \p_1 G_1 + \p_2\p_1 G_2]= -\p_2 p -(-\p_2\psi\p_1+\p_1 \psi \p_2)(\p_1 \psi) 
\]
\[
+ \p_1\big[ -\p_1 G_1 \p_2 G_1 -\p_1 G_2\p_2 G_2\big] + \p_2 \big[ (\p_1G_1)^2 + (\p_1 G_2)^2\big]
\]
\begin{equation}\label{eqn102}
= -\p_2 p + Q_{1,2}(\p_1\psi, \psi) - Q_{1,2}(\p_1 G_1, G_1) -Q_{1,2}(\p_1 G_2, G_2).
\end{equation}

Applying  $\p_2/|\nabla|^2$ on both hands side of equation (\ref{eqn100}) and $-\p_1/|\nabla|^2$ on both hands side of equation (\ref{eqn102}), and then adding those two equations together, we have
\[
\p_t \psi - \p_1 G_1 -\p_2 G_2 = -|\nabla|^{-1}R_2\big[Q_{1,2}(\p_2\psi, \psi) -Q_{1,2}(\p_2 G_1, G_1)  - Q_{1,2}(\p_2 G_2, G_2)
 \big]
\]
\[
- |\nabla|^{-1}R_1\big[Q_{1,2}(\p_1\psi, \psi) - Q_{1,2}(\p_1 G_1, G_1) -Q_{1,2}(\p_1 G_2, G_2) \big].
\]
Therefore the first equation of the system (\ref{mainequation}) holds. Now we proceed to derive the equations satisfied by $G_1$ and $G_2$. From the equation satisfied by $G$ in (\ref{mainequation1}), we have the following four equations,
\[
-\p_t \p_2 G_1 + \p_1\p_2 \psi = -(-\p_2 \psi \p_1 +\p_1\psi \p_2) (-\p_2 G_1) + (\p_1\p_2\psi\p_2 G_1 -\p_2^2\psi \p_1 G_1)
\]
\begin{equation}\label{eqn20001}
= Q_{1,2}(\psi, \p_2 G_1) + Q_{1,2}(\p_2\psi, G_1) = \p_2\big[ Q_{1,2}(\psi, G_1)\big],
\end{equation}
\begin{equation}\label{eqn20002}
\p_t \p_1 G_1 - \p_1^2\psi= Q_{1,2}(\p_1 G_1, \psi) + Q_{1,2}(G_1, \p_1 \psi)= -\p_1\big[ Q_{1,2}(\psi, G_1)\big],
\end{equation}
\[
-\p_t \p_2 G_2 +\p_2^2\psi = -(-\p_2 \psi \p_1 +\p_1\psi \p_2) (-\p_2 G_2)  + (\p_1\p_2\psi\p_2 G_2 -\p_2^2\psi\p_1 G_2 ) 
\]
\begin{equation}\label{eqn20003}
= Q_{1,2}(\psi,\p_2 G_2 ) + Q_{1,2}(\p_2 \psi, G_2) = \p_2\big[ Q_{1,2}(\psi, G_2)\big],
\end{equation}
\begin{equation}\label{eqn20004}
\p_t \p_1 G_2 -\p_2 \p_1\psi  = Q_{1,2}(\p_1 G_2, \psi) + Q_{1,2}(G_2, \p_1 \psi) =-\p_1\big[ Q_{1,2}(\psi, G_2)\big].
\end{equation}
From (\ref{eqn20001}) and (\ref{eqn20002}), we can see the following equation holds, 
\[
\p_t G_1 -\p_1 \psi = -Q_{1,2}(\psi, G_1)= Q_{1,2}(G_1, \psi).
\]
From (\ref{eqn20003}) and (\ref{eqn20004}), we can see the following equation holds, 
\[
\p_t G_2-\p_2 \psi = -Q_{1,2}(\psi, G_2) = Q_{1,2}(G_2, \psi).
\]
To sum up, all equations in the system (\ref{mainequation}) hold.

\bibliographystyle{nabbrav}

\begin{thebibliography}{99}

\bibitem{agemi} R. Agemi. Global existence of nonlinear elastic waves, \textit{Inven. Math.} \textbf{142} (2000). no.2, pp 225-250.

\bibitem{pierre3} F. Bernicot and P. Germain. Bilinear dispersive estimates via space-time resonances. Part I: the one dimensional case.

\bibitem{pierre4} F. Bernicot and P. Germain. Bilinear dispersive estimates via space-time resonances, part II: dimensions 2 and 3.


\bibitem{christo} D. Christodoulou; Global solutions of nonlinear hyperbolic equations for small initial data, \textit{Comm. Pure Appl. Math}, \textbf{39}(1986), no. 2, pp 267-282.

\bibitem{pierre1} P. Germain and N. Masmoudi. Global existence for the Euler-Maxwell system, preprint \textit{arXiv:1107.1595}.


\bibitem{germain2} P. Germain, N. Masmoudi and J. Shatah. Global solutions for the gravity surface water waves equation in dimension 3, \textit{Ann. of Math}., 175 (\textbf{2012}), no. 2, 691--754.


\bibitem{pierre2} P. Germain, N. Masmoudi and J. Shatah. Global existence for capillary water waves, preprint \textit{arXiv:1210.1601}.

\bibitem{Ifrim} M. Ifrim and D. Tataru. Two dimensional water waves in holomorphic coordinates II: Global solutions, preprint, \textit{arXiv:1404.7583v1}.

\bibitem{AIonescu4} Y. Guo, A. D. Ionescu and B. Pausader. The Euler-Maxwell two-fluid system in 3D, preprint \textit{arXiv:1303.1060}.

\bibitem{AIonescu5} Y. Guo, A. D. Ionescu and B. Pausader.  Global 
solutions of certain plasma fluid models in 3D, preprint.

\bibitem{Tataru2} J. Hunter, M. Ifrim, D. Tataru and T-K Wong. Long time solutions for a Burgers-Hilbert Equation via a modified energy method, \textit{arXiv:1301.1947}, preprint.

\bibitem{Tataru1}  J. Hunter, M. Ifrim and D. Tataru. Two dimensional water waves in holomorphic coordinates, \textit{arXiv:1401.1252}, preprint.

\bibitem{AIonescu3} A.D. Ionescu and B. Pausader. Global solutions of 
quasilinear systems of Klein-Gordon equations in 3D.


\bibitem{AIonescu1} A.D. Ionescu and F. Pusateri. Nonlinear Fractional Schrodinger Equations in one dimension, \textit{Journal of Functional Analysis}, \textbf{266}, pp139-176.


\bibitem{AIonescu2} A.D. Ionescu and F. Pusateri. Global solutions for the gravity water waves system in 2D, \textit{arXiv:1303:5357}, preprint.

\bibitem{IP3} A.D. Ionescu and F. Pusateri. Global analysis of a model for capillary water waves in 2D, \textit{arXiv:1406.6042}, preprint.

\bibitem{IP4} A.D. Ionescu and F. Pusateri. Global regularity for 2d water waves with surface tension, \textit{arXiv:1408.4428}, preprint.


\bibitem{john} F. John. Blow up of solutions of nonlinear wave equations in three space dimensions, \textit{Manuscripta Math}, \textbf{28} (1979), no, 1-3, pp 235-268.


\bibitem{john1} F. John.  Formation fo singularities in elastic waves, \textit{Lecture notes in Physics }, \textbf{195} Springer-Verlag, New York (1984), pp 194-210.
\bibitem{john2} F. John.  Almost global existence of elastic waves of finite amplitude arising from small initial disturbances, \textit{Comm. Pure Appl. Math}, \textbf{41}(1988), pp 615-666.

\bibitem{klainerman2} S. Klainerman. Uniform decay estimates and the Lorentz invariance for the classical wave equation, \textit{Comm. Pure Appl. Math}, \textbf{38}(1985), no. 3, pp 321-332.


\bibitem{klainerman1} S. Klainerman. The null condition and global existence to nonlinear wave equations, \textit{Lect. in Appl. Math}, \textbf{23}(1986), pp 307-321.


\bibitem{klainerman3} S. Klainerman and T.C. Sideris. On almost global existence for nonrelativistic wave
equations in 3D, \textit{Comm. Pure Appl. Math}. \textbf{49}(1996), pp 307-321.

\bibitem{Lei3} Z. Lei, C. Liu and Y. Zhou. Global solutions for incompressible Viscoelastic Fluids, \textit{Arch. Rational Mech. Anal}, \textbf{188} (2008), pp 371-398.

\bibitem{Lei2} Z. Lei, T.C. Sideris and Y. Zhou. Almost global existence for a 2-D incompressible isotropic Elastodynamics, to appear in \textit{Tran Amer Math Soc}.


\bibitem{Lei1} Z. Lei. Global well-posedness of incompressible Elastodynamics in 2D, \textit{arXiv:14026605}, preprint.

\bibitem{shatah} J. Shatah. Normal forms and quadratic nonlinear Klein-Gordon equations, \textit{Comm. Pure Appl. Math}, \textbf{38} (1985), no. 5, pp 685-696.








\bibitem{Sideris2} T.C. Sideris. The null condition and global existence of nonlinear elastic waves, \textit{Invent. Math}, \textbf{123} (1996)
, pp 323-342.

\bibitem{Sideries3} T.C. Sideris. Nonresonance and global existence of prestressed nonlinear elastic waves, \textit{Ann. of Math.}
(2) \textbf{151}(2000), no.2, pp 849-974.


\bibitem{Sideris1} T.C. Sideries and B. Thomases. Global existence for three-dimensional incompressible isotropic
elastodynamics via the incompressible limit, \textit{Comm. Pure Appl Math}. \textbf{58}(2005), no. 6. pp 750-788.

\bibitem{Sideris4} T.C. Sideris and B. Thomases. Global existence for 3d incompressible isotropic
elastodynamics, \textit{Comm. Pure Appl. Math}, \textbf{60} (2007), no. 12, pp 1707-1730.


\bibitem{Sideris} S. A. Tahvildar-Zadeh. Relativistic and non relativistic elastodynamics with small shear strains, \textit{Ann. Inst. H. Poincar\'e- Phys. Th\'eor}, \textbf{69}(1998), pp 275-307. 



\end{thebibliography}

\end{document}